\newdimen{\algindent}
\algnewcommand{\LeftComment}[1]{\Statex \hspace{\algindent}
\( \triangleright \) #1}
\newtheorem{definition}{Definition}
\renewcommand{\le}{\leqslant}
\renewcommand{\ge}{\geqslant}
\newcommand{\Z}{\ensuremath{\mathbb{Z}}}
\def\x{{\bf x}}
\def\y{{\bf y}}
\def\z{{\bf z}}
\def\s{{\bf s}}
\def\t{{\bf t}}
\def\c{{\bf c}}
\def\j{{\bf j}}
\def\k{{\bf k}}
\newcommand\bxi{\boldsymbol{\xi}}
\def\dt{{\Delta t}}
\def\I{\mathcal{I}}
\def\V{\mathcal{V}}
\def\cd{\mathcal{D}}
\def\cs{\mathcal{S}}
\def\n{{\bf n}}
\def\k{{\bf k}}
\def\ub{{\bf u}}
\def\beq{\begin{equation}}
\def\eeq{\end{equation}}
\def\Z{{\bf Z}}
\def\rprime{\hbox{\hskip.25em\raise.5ex\hbox{$'$}\hskip.15em}}
\def\ddn1{{\frac{\partial}{\partial \nu_{\yb}}}}
\def\ft{{\tilde f}}
\newcommand{\bbR}{{\mathbb R}}
\newcommand{\cn}{\cite}
\newcommand{\ignore}[1]{}
\newtheorem{theorem}{Theorem}[section]
\newtheorem{lemma}[theorem]{Lemma}
\newtheorem{corollary}[theorem]{Corollary}
\newtheorem{remark}[theorem]{Remark}
\newcommand{\be}{\begin{equation}}
\newcommand{\ee}{\end{equation}}
\newcommand{\ba}{\begin{aligned}}
\newcommand{\ea}{\end{aligned}}
\newcommand{\bea}{\begin{eqnarray}}
\newcommand{\eea}{\end{eqnarray}}
\DeclareMathOperator{\sgn}{sgn}
\DeclareMathOperator{\erfc}{erfc}
\newcommand*{\affaddr}[1]{#1} 
\newcommand*{\affmark}[1][*]{\textsuperscript{#1}}
\def\ymsc{Yau Mathematical Sciences Center, \\
Tsinghua University,\\
Haidian District, Beijing, China 100084}
\def\FI{Flatiron Institute, Simons Foundation, \\ New York, New York 10010}
\def\njit{Department of Mathematics Sciences, \\
New Jersey Institute of Technology,\\
Newark, New Jersey 07102}
\def\nyu{Courant Institute, New York University,\\ New York, New York 10012}
\def\umich{Department of Mathematics, University of Michigan, \\ Ann Arbor, Michigan 48109}
\title{Fast integral equation methods for linear and semilinear
heat equations in moving domains}
\author{%
J. Wang\affmark[1], L. Greengard\affmark[2,3], S. Jiang\affmark[4] 
and S. Veerapaneni\affmark[5]\\
$\,$ \\
\affaddr{\affmark[1]\ymsc}\\
Email: {\tt jwang2020@tsinghua.edu.cn} \\ 
$\,$\\
\affaddr{\affmark[2]\FI} \\ 
\affaddr{\affmark[3]\nyu}\\
Email: {\tt greengard@courant.nyu.edu} \\
$\,$ \\
\affaddr{\affmark[4]\njit}\\ 
Email: {\tt shidong.jiang@njit.edu} \\
$\,$\\
\affaddr{\affmark[5]\umich}\\
Email: {\tt shravan@umich.edu}
}
\begin{document}

\maketitle

\begin{abstract}
We present a family of integral equation-based solvers 
for the linear or semilinear heat equation in complicated moving 
(or stationary) geometries. This approach has significant advantages 
over more standard finite element or finite difference methods in terms 
of accuracy, stability and space-time adaptivity. In order to be practical, 
however, a number of technical capabilites are required: fast algorithms 
for the evaluation of heat potentials, high-order accurate quadratures 
for singular and weakly integrals over space-time domains, and robust 
automatic mesh refinement and coarsening capabilities.  We describe 
all of these components and illustrate the performance of 
the approach with numerical examples in two space dimensions.
\end{abstract}

\tableofcontents 
\vspace{3mm}
\section{Introduction \label{sec:intro}}  

A variety of problems in applied physics, chemistry and 
engineering require the solution of the diffusion equation
\begin{equation} \label{heatfree}
\begin{split}
u_{t}(\x,t) &= \Delta u(\x,t)+F(u,\x,t), \\
u(\x,0) &= u_0(\x),
\end{split}
\end{equation}
for $t>0$ and $\x \in \mathbb{R}^d$.
The problem is sometimes posed in free space, but often
it is assumed that $(\x,t)$ lies in 
the interior or exterior of a space-time domain 
\begin{equation}
\Omega_T=\prod_0^T \Omega(t)\subset \mathbb{R}^d\times (0,T], 
\label{domaindef}
\end{equation}
subject to suitable conditions on 
its boundary 
\begin{equation}
\Gamma_T = \prod_0^T \Gamma(t), 
\label{boundarydef}
\end{equation}
where $\Gamma(t)=\partial \Omega(t)$.
As a matter of nomenclature, we refer to the  
equation \eqref{heatfree} as {\em semilinear} and {\em nonautonomous}.
When $F(u,\x,t) = F(u)$, the equation is {\em autonomous}, and when
$F(u,\x,t) = F(\x,t)$, it is {\em linear}.

Classical methods, such as finite difference or finite element methods,
evolve the solution at successive time steps, using some local 
approximation of the partial differential equation (PDE)
\eqref{heatfree} itself.
While more general,
they are known to have a severe constraint on the size of time step when
relying on an explicit marching scheme. Using the forward Euler method,
for example, if we approximate the Laplacian via the second order
central difference operator on a uniform grid with
the same spacing in each coordinate direction, then
the stability condition for solving the pure initial value problem requires
that 
$\Delta t \leq \frac{1}{2d}\Delta x^2$, where $\Delta t$ is the time 
step, $\Delta x$ is the step in the spatial discretization, and $d$
is the ambient dimension.
In general, the stability constraint takes the form
$\dt=\mathcal{O}(h^2)$ where $h$ is the smallest spacing in the
spatial discretization.
This constraint is typically overcome
by using an implicit marching scheme. Even in the linear case, however, this
involves the solution of a system of algebraic equations at
each time step, coupling all the spatial grid points. In the semilinear
case, it requires the solution of a large, nonlinear system of equations. 
Moreover, high order accuracy is difficult to achieve with direct
discretization methods when dealing with 
time-dependent domains.
The available literature on such approaches is vast and we do not seek
to review it here.
In this paper, we focus on the use of parabolic potential theory
\cite{costabel,friedman1964,guenther1988,pogorzelski} and the development 
of the fast algorithms that will permit us to solve fairly
general initial-boundary value problems. 

Let us first consider the linear setting, when the forcing term $F(u,\x,t) = F(\x,t)$
in \eqref{heatfree} is known, in the absence of physical boundaries.
For the sake of simplicity, we assume that the forcing term at time
$t$ is supported in the bounded domain $\Omega(t)$ and that the initial
data $u_0(\x)$ is supported in $\Omega(0)$. This is a  well-posed problem
in free space under mild conditions on the behavior of $u$ at infinity
\cite[p. 25]{friedman1964}. The solution to \eqref{heatfree} can be expressed
at any later time $t$ in closed form as
\begin{equation}
u^{(V)}(\x,t) = \I[u_0](\x,t)+\V[F](\x,)
\label{freesol}
\end{equation}
with 
\begin{equation}
\I[u_0](\x,t) = \int_{\Omega(0)} G(\x-\y,t) u_0(\y) \, d\y, 
\label{initpot}
\end{equation}
and 
\begin{equation}
\V[F](\x,t) = \int_0^t\int_{\Omega(\tau)} G(\x-\y,t-\tau) F(\y,\tau) \, d\y d\tau.
\label{volpot}
\end{equation}
Here,
\begin{equation}
G(\x,t) = \frac{e^{-\|\x\|^2/4t}}{(4 \pi t)^{d/2}}
\label{heatker}
\end{equation}
is the free-space Green's function for the heat equation 
(the {\em heat kernel}) in $d$ dimensions,
The functions $\I[u_0]$ and $\V[F]$ are referred to as {\em initial} and
{\em volume} (heat) potentials, respectively.

\begin{remark}
The spatial domains of integration in $\I[u](\x,t)$ and 
$\V[F](\x,t)$ are implicitly assumed to be the support of the indicated
functions. Above, these are finite domains defined by $\Omega(0)$ and
$\Omega_t$, respectively. 
\end{remark}

\begin{remark}
In the remainder of this paper, we will assume $d=2$. 
\end{remark}

A compelling feature of the solution 
$u^{(V)}$ is that it is exact, explicit, and does not involve the inversion
of any matrices. It requires only the 
{\em evaluation} of the initial
and volume potentials. Thus, stability never arises as an issue, and the error
is simply the quadrature error in approximating the integral operators
in the relevant potentials themselves.
This approach is typically ignored for two simple reasons.
First, good quadrature rules are needed and second, in the absence
of fast algorithms, the computational cost is excessive. More
precisely, assuming there are $N_T$ time steps,
$N$ points in the discretization of $F$ and 
$N$ target points at each time step, naive summation
applied to \eqref{volpot} requires $O(N^2 N_T^2)$ work.

For a time step $\Delta t$, however, 
using the definition of the 
heat kernel, it is straightforward to see that
\begin{equation}
\begin{aligned}
u^{(V)}(\x,t+\delta) = 
&\int_{\mathbb{R}^d} G(\x-\y,\delta) \, 
u^{(V)}(\y,t) \, d\y + \\
&\int_{t}^{t+\delta}
\int_{\Omega(\tau)} G(\x-\y,t+\delta-\tau) \, F(\y,\tau) \, d\y d\tau.
\end{aligned}
\label{vmarch}
\end{equation}
That is to say, the heat equation in free space can be solved by 
sequential convolution with the heat kernel at every time step,
followed by the addition of a {\em local} contribution from 
the forcing term $F(\x,t)$ over the last time step alone.
This is simply the semigroup property satisfied by the heat equation
itself, and can be viewed as a marching scheme for the PDE expressed in 
integral form. Using this approach, the apparent
history-dependence of evaluating heat potentials has vanished and the 
net cost has dropped from
$O(N^2 N_T^2)$ to $O(N^2 N_T)$ work, assuming some suitable quadrature
rule has been developed for the integrals in \eqref{vmarch}.
There is, however, a small catch. Note that the first spatial integral
in \eqref{vmarch} is of the form
\[
\I[u^{(V)}(\x,t)](\x,t+\delta) = 
\int_{\mathbb{R}^d} G(\x-\y,\delta) 
u^{(V)}(\y,t) \, d\y, 
\]
with the domain of integration $\mathbb{R}^d$,
whereas the domain of integration
in the exact solution \eqref{freesol} is the bounded region $\Omega_t$ - 
the support of the data itself. 
Thus, in order to make use of the marching scheme
\eqref{vmarch}, we will have to cope with the spreading of the solution in
free space over time. This turns out to be surprisingly easy to do 
(see section \ref{sec:review}).

In many applications, physical boundaries are present on which 
additional conditions are imposed.
We will limit our attention to three standard boundary value problems.
For the Dirichlet problem, the value
of the solution on the boundary is given:
\begin{equation}
u(\x,t) = 
f(\x,t) \qquad (\x,t) \in \Gamma_T.
\label{dirichlet00}
\end{equation}
For the Neumann problem, the value of the normal derivative 
of the solution at the boundary is given:
\begin{equation}
 \frac{\partial u}{\partial \nu_{\x}}(\x,t) = 
g(\x,t) \quad (\x,t) \in \Gamma_T,
\label{neumann00}
\end{equation}
where $\nu_{\x}$ denotes the 
normal vector at the boundary point $\x$.
For the Robin problem, a specific linear combination of the 
solution and its normal derivative is given on the boundary:
\begin{equation}
  \frac{\partial}{\partial \nu_{\x}}u(\x,t)+\alpha(\x,t)u(\x,t)=h(\x,t)
  \qquad (\x,t)\in\Gamma_T ,
  \label{robin00}
\end{equation}
with $\alpha(\x,t)\geq 0$.

In standard finite difference and finite element methods, 
the boundary condition must be coupled with
the interior marching scheme, which can impose additional
stability constraints on the time step~\cite{gustafsson,gustafsson1972mcom,osher1969mcom,strikwerda,trefethen1983jcp}. In the framework of potential theory,
on the other hand, we may write the solution to the
full (linear) initial-boundary value problem as
\be
u=u^{(V)}+u^{(B)},
\ee
where $u^{(V)}$ is given by \eqref{freesol} and
$u^{(B)}$ is the solution
to a {\em homogeneous} boundary value problem with zero initial data and 
no forcing term. The boundary condition for $u^{(B)}$ is simply the 
original boundary condition for $u(\x,t)$, 
from which is subtracted the contribution 
of $u^{(V)}$. We will see below that we can represent 
$u^{(B)}$ in terms of layer heat
potentials defined in terms of unknown densities which are restricted to 
the space-time boundary $\Gamma_T$. 
These densities are determined by the boundary data 
through the solution of Volterra integral
equations of the second kind.
This formulation has some remarkable benefits.
First, explicit marching schemes are not subject to a mesh-dependent
stability constraint. In fact, for the Dirichet and Neumann problems, 
they can be unconditionally stable, as discussed in section \ref{sec:stabmarch}.
Second, integral equations are naturally compatible with moving boundaries, 
and apply equally well to interior or exterior problems. In the latter
case, they do not 
require the artificial truncation of the computational domain or the associated
difficulties in constructing suitable ``outgoing" boundary conditions. 
As for the volume potential discussed above, however, there are
two principal drawbacks to making such computations practical: the exhorbitant
computational cost and the need to integrate singular or weakly singular 
kernels over complicated surfaces in space-time. 
A naive algorithm would again require an amount of work
of the order $\mathcal{O}(N N_S N_T^2)$ where $N$ is the number of 
target points at each time step, $N_S$ is the number of discretization
points on the boundary $\Gamma(t)$ and $N_T$ is the number of time steps.

\subsection{Semilinear problems}

For the full, non-autonomous semilinear problem \eqref{heatfree} in free space,
we may still use the representation \eqref{freesol}, but now
\begin{equation}
\V[F](\x,t) = \int_0^t\int_{\Omega(\tau)} G(\x-\y,t-\tau) F(u,\y,\tau) \, d\y d\tau.
\label{volpotnonlin}
\end{equation}

This still permits the analog of the marching scheme \eqref{vmarch}, 
except that
\begin{equation}
\begin{aligned}
u(\x,t+\delta) = 
&\int_{\mathbb{R}^d} G(\x-\y,\delta)  
u(\y,t) \, d\y\ + \\
&\int_{t}^{t+\delta}
\int_{\Omega(\tau)} G(\x-\y,t+\delta-\tau) F(u,\y,\tau) \, d\y d\tau.
\end{aligned}
\label{vmarchnonlin}
\end{equation}
is now a nonlinear integral equation for $u(\x,t)$ rather than an 
explicit, exact solution. We discuss the solution to such problems in 
section \ref{sec:alg}.

\subsection{Periodic boundary conditions} \label{periodicbc}

In many applications, the solution of the heat equation is required
with periodic boundary conditions on, say, the unit box
$B = [-\frac{1}{2},\frac{1}{2}]\times [-\frac{1}{2},\frac{1}{2}]$.
That is, $u(\x,t)$ must satisfy
\[
\begin{aligned}
u(-\tfrac{1}{2},y) &= u(\tfrac{1}{2},y), \quad
u_x(-\tfrac{1}{2},y) = 
u_x(\tfrac{1}{2},y),\ {\rm for}\ 
-\tfrac{1}{2} \leq y \leq \tfrac{1}{2} \\
u(x,-\tfrac{1}{2}) &= u(x,\tfrac{1}{2}), \quad
u_y(x,-\tfrac{1}{2}) = 
u_y(x,\tfrac{1}{2}),\ {\rm for}\ 
-\tfrac{1}{2} \leq x \leq \tfrac{1}{2}.
\end{aligned}
\]
In the absence of physical boundaries, 
the solution to the linear heat equation with forcing
is again of the form \eqref{freesol}, but with $G(\x,t)$ defined to be the 
periodic Green's function for the heat equation in two dimensions:
\begin{equation}
G_P(\x,t) =
\sum_{j=-\infty}^{\infty}
\sum_{k=-\infty}^{\infty}
 \frac{e^{-\|\x + (j,k) \|^2/4t}}{(4 \pi t)},
\label{heatker2dper}
\end{equation}
using the method of images.
The marching scheme in \eqref{vmarch} still applies, except the
domain of integration is 
the box $B$ for both the initial potential and the 
local increment, rather than ${\mathbb{R}^2}$ and $\Omega(t)$.
Moreover, the marching scheme
\eqref{vmarchnonlin} applies in the semilinear case here as well (with 
$G_P(\x,t)$ replacing the free space Green's function $G(\x,t)$).

We will also consider boundary value problems where a collection of
inclusions define a region $\Omega_T \subset B$, and the domain of 
interest is the complement $B \setminus \Omega_T$, with 
periodic conditions on $B$ and Dirichlet, Neumann or Robin conditions
on $\Gamma_T$ (see Section \ref{sec:examples}).
\begin{figure}
\centering
\includegraphics[width=.6\textwidth]{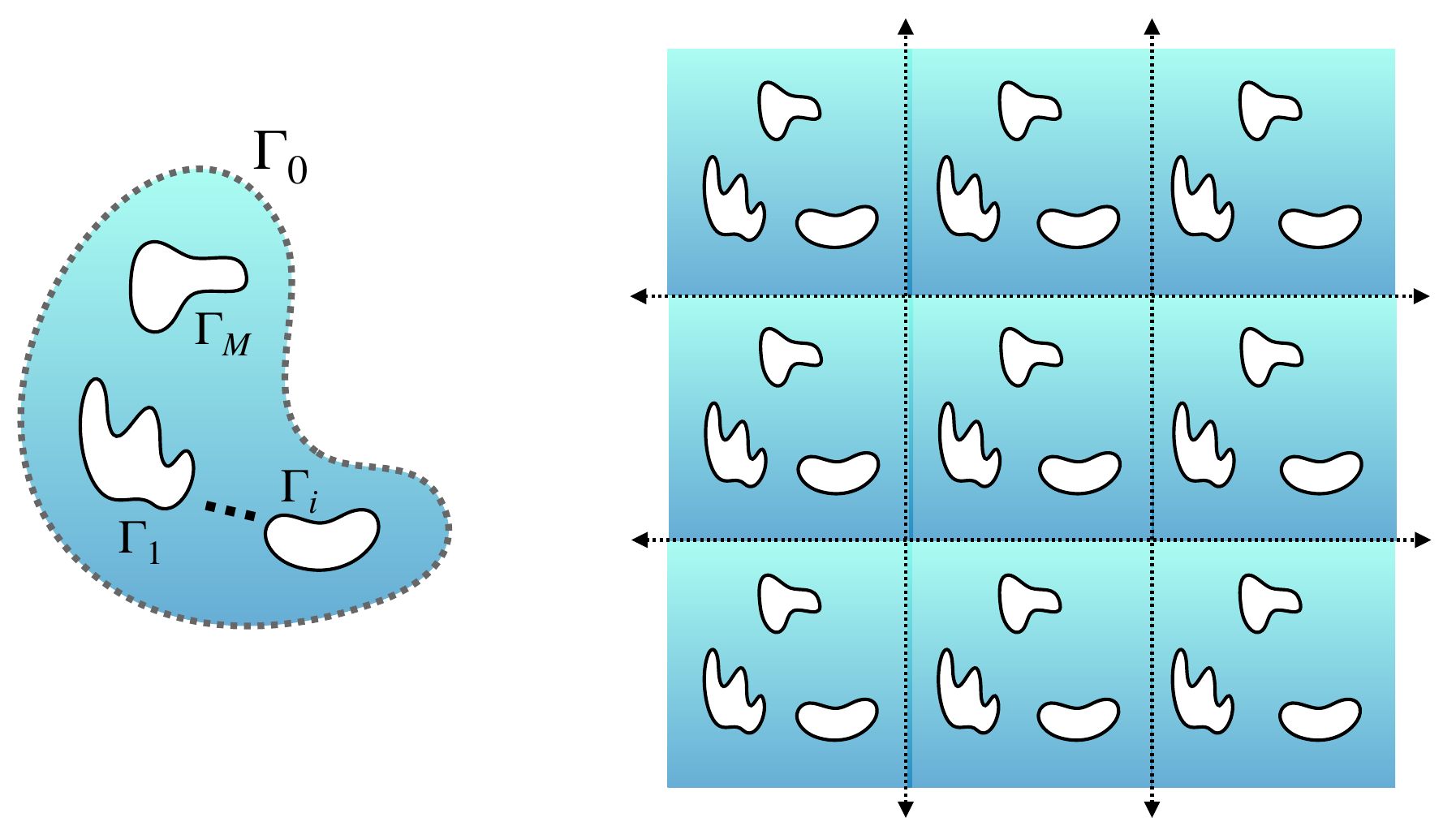}
\caption{
Typical domains of interest:
on the left is a multiply connected region defined by the closed 
curves $\Gamma_1(t),\dots,\Gamma_M(t)$. 
Interior problems include an outer boundary $\Gamma_0(t)$
and the domain defined at time $t$
by $\Gamma(t) = \cup _{i = 0}^M \Gamma_i(t)$.
We will also consider 
boundary value problems in the exterior
domain bounded at time $t$
by $\Gamma(t) = \cup _{i = 1}^M \Gamma_i(t)$.
Periodic boundary value problems are illustrated
on the right.
We show the unit box $B$ and its eight nearest neighbors in an infinite
tiling of the plane. When inclusions are present, the domain of interest
is $B \setminus \Omega_T$, where $\Omega_T$ is the region bounded by the 
indicated curves. We will also consider periodic problems 
in the absence of inclusions, defined by initial data and a linear or 
semilinear forcing term.}
\label{domains}
\end{figure}

\subsection{Synopsis of the paper}

The paper is organized as follows. In section~\ref{sec:prelim}, we review
the relevant parts of parabolic 
potential theory and show how to reformulate the various boundary value problems
as Volterra integral equations of the second kind. 
In section~\ref{sec:review} and \ref{sec:bootstrap}, 
we discuss the state of the art 
in fast algorithms for the evaluation of 
heat potentials, focusing on some recently developed quadrature methods and
adaptive versions of the fast Gauss transform (FGT) 
\cite{brattkus1992siap,greengard1991fgt,greengard1990cpam,greengard1998nfgt,greengard2000acha,Lee2006,li2009sisc,sampath2010pfgt,spivak2010sisc,strain1994sisc,tausch2009sisc,veerapaneni2008jcp,wang2018sisc}.
The full numerical scheme for boundary value problems
is summarized in section~\ref{sec:alg}, and extensive
numerical examples are provided in section~\ref{sec:examples} in both 
the linear and semilinear settings.
We conclude in section~\ref{sec:conclusion} with a discussion of open problems.

We hope to make clear some of the distinct advantages to be gained from
using the integral equation framework. These include the following:
\begin{enumerate}
\item 
Explicit marching schemes, as mentioned above, can be {\em unconditionally stable}
(see section \ref{sec:stabmarch}).
\item
For semilinear problems, fully implicit methods 
require only the solution of scalar nonlinear equations
\cite{epperson2}, unlike PDE-based methods 
(see section \ref{sec:alg}).
\item
Fast algorithms are available with which all of the methods described
are asymptotically optimal and able to achieve arbitrary order accuracy
(see section \ref{sec:review}).
\end{enumerate}
\subsection{Notation}
In the table below, we list some symbols that will be used frequently.

\begin{table}[!htb]
\centering

\begin{tabular}{c|l}
\hline
\textbf{Symbol} & \textbf{Definition} \\ [1 mm]
\hline
$\Omega_T$ & Space-time domain (eq. \ref{domaindef}) \\
$\Gamma_T$ & Space-time boundary (eq. \ref{boundarydef}) \\
$\I$ & Initial potential (eq. \ref{initpot}) \\
$\V$ & Volume potential (eq. \ref{volpot}) \\
$\cs$ & Single-layer potential (eq. \ref{eq:slp}) \\
$\cd$ & Double-layer potential (eq. \ref{eq:dlp}) \\
$\cs_H, \cd_H, \V_H$ & the {\em history} parts of $\cs$, $\cd$ and $\V$ 
(eq. \ref{eq:hist}) \\
$\cs_{NH}, \cd_{NH}$ & the {\em near history} parts of $\cs$ and $\cd$ 
(eq. \ref{bootstrap}) \\
$\cs_{FH}, \cd_{FH}$ & the {\em far history} parts of $\cs$ and $\cd$ 
(eq. \ref{smooth}) \\
$\cs_L, \cd_L, \V_L$ & the {\em local} parts of $\cs$, $\cd$ and $\V$ 
(eq. \ref{eq:loc}) \\
$\cs_\epsilon, \cd_\epsilon$ & the {\em asymptotic} parts of 
$\cs$ and $\cd$ 
(eq. \ref{eq:slocasym}) \\
\hline
\end{tabular}
\caption{Index of frequently used symbols and operators.}
\label{tbl:notation}
\end{table}

\section{Mathematical Preliminaries \label{sec:prelim}}

In this section, we review classical potential theory
for the heat equation and refer the reader to the texts
\cn{friedman1964}, \cn{guenther1988}, and \cn{pogorzelski} for more
thorough treatments.
While the theory is similar to that for the elliptic case (governed
by the Laplace or Helmholtz equations), there are some important differences.
Existence and uniqueness results are easier to obtain
because the integral equations that arise are second kind Volterra 
equations rather than Fredholm equations, avoiding the possibility of
spurious resonances. 

As noted in the introduction, it is well-known 
that the equations \eqref{heatfree} 
are well-posed in free space, with the solution given by \eqref{freesol}.
The solution is unique assuming
$u_0(\x)$ and $F(\x,t)$ are continuous and bounded by some constant 
$L$ on $\bbR^d$ \cite{friedman1964,pogorzelski}.
In that case, the solution $u^{(V)}(\x,t)$ to \eqref{heatfree}
is continuous and satisfies $|u^{(V)}(\x,t)|\leq L$ for $\x\in\bbR^d$ and $t>0$.
\subsection{The classical initial-boundary value problems}

We turn now to a consideration of interior boundary value problems 
in domains $\Omega_T$ or their exterior counterparts, defined in 
$\Omega_T^c=\prod_{t=0}^T \Omega^c(t)$, 
where $\Omega^c(t)=\bbR^d\backslash \Omega(t)$, as depicted in 
Fig. \ref{domains}.
When the spatial domain is stationary, we will denote the space-time
domain on which the heat equation is to be solved by
the interior region
$\Omega_T=\Omega\times [0,T]$, with boundary $\Gamma_T=\Gamma\times [0,T]$
and the corresponding exterior domain as 
$\Omega_T^c$. 
As indicated earlier, we 
denote the unit outward normal to $\Gamma(t)$ at each point $\x \in \Gamma(t)$ 
by $\nu_{\x}$ and the positively-oriented unit tangent vector at $\x$
by $\tau_{\x}$. 
For all boundary value problems of interest, we assume that
\begin{equation}
\label{heatfreebd}
\begin{aligned}
u_t(\x,t) & =\Delta u(\x,t)+F(\x,t), \qquad (\x,t)\in\Omega_T,  \\
u(\x,0) &= u_0(\x), \qquad \x\in \overline{\Omega(0)}.
\end{aligned}
\end{equation}

For the sake of simplicity, we always assume that the initial and boundary data
are compatible.
For the {\em Dirichlet} problem, where we impose 
\eqref{dirichlet00},
this requires that
\[
f(\x,0)=u_0(\x), \qquad \x\in\Gamma(0). 
\]
For the {\em Neumann} problem, where we impose
\eqref{neumann00},
this requires that
\[
g(\x,0)=\frac{\partial}{\partial \nu_{\x}}u_0(\x), \qquad \x\in\Gamma(0) .
\]
Finally, for the {\em Robin} problem, where we impose
\eqref{robin00},
this requires that 
\[
h(\x,0)=\frac{\partial}{\partial 
\nu_{\x}}u_0(\x)+\alpha(\x)u_0(\x), \qquad \x\in\Gamma(0) .
\]

\subsection{Layer Heat Potentials\label{sec:layer}}

As in the elliptic case, homogeneous 
partial differential equations of parabolic type
can be reduced to 
boundary integral equations 
when the material coefficients (here the conductivities) are constant.
This transformation makes essential use of the governing Green's function
and a source distribution restricted to the space-time boundary $\Gamma_T$,
called a surface density or layer potential density.
Formally speaking, these are distributions when viewed as functions in the 
ambient space (like a surface charge or surface current density in 
electromagnetic theory) but simply defined as 
functions on the surface using a suitable
parametrization.

\begin{definition}
In the remainder of this paper, we assume the domain $\Omega_T$
is {\em sufficiently smooth}, meaning that
the domain boundary
$\Gamma(t)$ is $C^2$ with continuous curvature and
that, in the moving geometry case, it evolves with a continuous velocity.
\end{definition}

\begin{definition}
Suppose that $\Omega_T$ is a sufficiently smooth, 
bounded space-time domain and that the single and double layer
densities $\sigma(\x,t)$ and $\mu(\x,t)$ are continuous 
for $(\x,t)\in \Gamma_T$. Then,
the {\em single layer potential} 
$\cs[\sigma]$ is given by
\begin{equation}\label{eq:slp}
\cs[\sigma](\x,t)=
\int_0^t\int_{\Gamma(\tau)} G(\x-\y,t-\tau) \sigma(\y,\tau) ds_{\y}d\tau,
\end{equation}
and the double layer potential $\cd[\mu]$ is given by
\begin{equation}\label{eq:dlp}
\cd[\mu](\x,t)=\int_0^t\int_{\Gamma(\tau)} \frac{\partial}{\partial \nu_{\y}} G(\x-\y,t-\tau) \mu(\y,\tau) ds_{\y}d\tau\, .
\end{equation}
\end{definition}

The layer heat potentials defined in (\ref{eq:slp}) and (\ref{eq:dlp}) 
satisfy well-known jump conditions as $\x$ approaches a point $\x_0$ on 
the boundary $\Gamma(t)$ from either the interior or exterior. 
Their derivations are standard \cite{friedman1964,guenther1988,pogorzelski}
and easily deduced from their better known elliptic analogs. 

\begin{theorem}\label{thm:slpjump}
The single layer potential $\cs[\sigma](\x,t)$
is continuous for all $\x\in\bbR^2$ and $t\geq 0$ and 
satisfies the relations
\begin{gather}
\frac{\partial}{\partial t} \cs[\sigma](\x,t)=\Delta \cs[\sigma](\x,t),\;\;\;x\notin \Gamma(t),\;\;\;t>0, \\
\cs[\sigma](\x,0)=0,\;\;\;x\notin \Gamma(t), \\
\lim_{\substack{\x\rightarrow\x_0\in \Gamma(t) \\ \x\in \Omega(t)}} \frac{\partial}{\partial \nu_{\x_0}}\cs[\sigma](\x,t)=\frac{1}{2}\sigma(\x_0,t)+\frac{\partial}{\partial \nu_{\x_0}}\cs[\sigma](\x_0,t), \label{eq:slpjumpint}\\
\lim_{\substack{\x\rightarrow\x_0\in \Gamma(t) \\ \x\in \Omega^c(t)}} \frac{\partial}{\partial \nu_{\x_0}}\cs[\sigma](\x,t)=-\frac{1}{2}\sigma(\x_0,t)+\frac{\partial}{\partial \nu_{\x_0}}\cs[\sigma](\x_0,t) \label{eq:slpjumpext} \, ,
\end{gather}
where the expression
$\frac{\partial}{\partial \nu_{\x_0}}\cs[\sigma](\x_0,t)$  
is interpreted in the principal value sense.
\end{theorem}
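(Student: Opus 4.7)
The plan is to dispose of the regularity assertions quickly and devote the bulk of the argument to the jump relations \eqref{eq:slpjumpint}--\eqref{eq:slpjumpext}, which constitute the substantive content. For $\x \notin \Gamma(t)$, continuity in time of the boundary ensures that $\inf_\tau |\x - \y|$ is strictly positive for $(\y,\tau) \in \Gamma_T$ with $\tau$ in a neighborhood of $t$, so the Gaussian factor in $G(\x-\y,t-\tau)$ dominates and differentiation under the integral sign is justified; transferring $(\partial_t - \Delta_\x)G = 0$ to $\cs[\sigma]$ then yields the heat equation, and the initial condition is immediate since integration over $\tau \in [0,0]$ is empty. Continuity of $\cs[\sigma]$ across $\Gamma_T$ follows from the observation that tangential Gaussian integration against $G$ reduces the temporal singularity from $(t-\tau)^{-1}$ to the integrable $(t-\tau)^{-1/2}$, after which dominated convergence applies.

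For the jump relations, write $\x = \x_0 + h\,\nu_{\x_0}$ with $h > 0$ (respectively $h < 0$) corresponding to the exterior (respectively interior) approach, and use
\[
\frac{\partial}{\partial \nu_{\x_0}}G(\x-\y,t-\tau) = -\frac{(\x-\y)\cdot \nu_{\x_0}}{2(t-\tau)}\,G(\x-\y,t-\tau).
\]
Fix $\delta > 0$ and split the $(\y,\tau)$ integration into a local piece over $U_\delta = \{(\y,\tau) \in \Gamma_T : |\y-\x_0|^2 + (t-\tau) < \delta^2,\ \tau < t\}$ and its complement. The kernel on the complement is uniformly continuous in $h$ near $0$, so the complementary contribution converges as $h\to 0$ to the corresponding piece of the principal-value integral $\frac{\partial}{\partial \nu_{\x_0}}\cs[\sigma](\x_0,t)$, with an $O(\delta)$ remainder that vanishes by letting $\delta \to 0$ afterward.

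In the local piece, parametrize $\Gamma(\tau)$ near $\x_0$ as a graph $s \mapsto \x_0 + s\,\tau_{\x_0} + O(s^2 + |t-\tau|)$ over the tangent line at $(\x_0,t)$, using $C^2$ smoothness and continuous boundary velocity to control the remainder. Replacing $\sigma(\y,\tau)$ by $\sigma(\x_0,t)$ introduces only a vanishing error by continuity, and $(\x - \y)\cdot \nu_{\x_0} = h + O(s^2)$, so the leading contribution reduces to
\[
-\sigma(\x_0,t)\int_0^{\delta^2}\!\!\int_{|s|<\delta} \frac{h}{2(t-\tau)}\cdot \frac{e^{-(s^2+h^2)/(4(t-\tau))}}{4\pi(t-\tau)}\,ds\,d\tau.
\]
Performing the Gaussian integral in $s$ (extending to $\bbR$ with exponentially small error) and then substituting $u = h^2/(4(t-\tau))$ collapses this to
\[
-\sigma(\x_0,t)\,\sgn(h)\,\frac{1}{2\sqrt{\pi}}\int_{h^2/(4\delta^2)}^{\infty} e^{-u} u^{-1/2}\,du \;\longrightarrow\; -\tfrac{1}{2}\,\sgn(h)\,\sigma(\x_0,t)
\]
as $h \to 0$, since $\int_0^\infty e^{-u}u^{-1/2}\,du = \sqrt{\pi}$. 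This gives $\mp\tfrac{1}{2}\sigma(\x_0,t)$ for the exterior/interior approach and, combined with the principal-value limit from the complement, recovers \eqref{eq:slpjumpint}--\eqref{eq:slpjumpext}.

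The main obstacle is precisely this local computation: the temporal singularity of $G$ at $\tau = t$ is coupled with the spatial singularity at $\y = \x_0$, and the $\pm\tfrac{1}{2}$ jump emerges only through the delicate balance between the $h/(t-\tau)$ prefactor and the Gaussian $e^{-h^2/(4(t-\tau))}$, which concentrates the $\tau$-integration onto the parabolic scale $t-\tau \sim h^2$. The key identity $\sqrt{h^2} = |h|$ is what produces the $\sgn(h)$ and hence the opposite jumps on the two sides. Verifying that the error contributions from boundary curvature, smoothness of $\sigma$, and the moving geometry are strictly subleading under this parabolic scaling is exactly what the ``sufficiently smooth'' hypothesis (continuous curvature and continuous velocity) is engineered to deliver.
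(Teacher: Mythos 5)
The paper itself does not prove Theorem~\ref{thm:slpjump}: it quotes the jump relations as classical and refers to Friedman, Guenther--Lee, and Pogorzelski, so there is no in-paper argument to compare against. Your proposal is essentially the standard derivation found in those references, and its core is correct: the kernel identity for $\partial G/\partial\nu_{\x_0}$, the near/far splitting, the flattening of $\Gamma(\tau)$ over the tangent line, and the local computation (Gaussian integration in $s$ followed by $u=h^2/(4(t-\tau))$, yielding $-\tfrac{1}{2}\sgn(h)\,\sigma(\x_0,t)$) reproduce the $\pm\tfrac12$ jumps with the signs consistent with the paper's outward-normal convention ($h>0$ exterior). The only places that would need to be written out in a complete proof are the ones you flag yourself: uniformity in $h$ of the $O(\delta)$ bounds on the curvature, density-continuity, arclength, and moving-boundary ($O(|t-\tau|)$ normal displacement) corrections, which is what legitimizes taking $h\to0$ before $\delta\to0$ and identifies the far-field limit with the principal-value integral; these estimates do go through under the stated $C^2$/continuous-velocity hypotheses by exactly the parabolic-scaling bounds you indicate.
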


\begin{theorem}\label{thm:dlpjump}
The double layer potential $\cd[\mu](\x,t)$ 
is defined for all $\x\in\bbR^d$ and $t\geq 0$ and 
satisfies the relations
\begin{gather}
\frac{\partial}{\partial t} \cd[\mu](\x,t)=\Delta \cd[\mu](\x,t),\;\;\;x\notin \Gamma(t),\;\;\;t>0, \\
\cd[\mu](\x,0)=0,\;\;\;x\notin \Gamma(t), \\
\lim_{\substack{\x\rightarrow\x_0\in \Gamma(t) \\ \x\in \Omega(t)}} \cd[\mu](\x,t)=-\frac{1}{2}\mu(\x_0,t)+\cd[\mu](\x_0,t), \label{eq:dlpjumpint}\\
\lim_{\substack{\x\rightarrow\x_0\in \Gamma(t) \\ \x\in \Omega^c(t)}} \cd[\mu](\x,t)=\frac{1}{2}\mu(\x_0,t)+\cd[\mu](\x_0,t), \label{eq:dlpjumpext} 
\end{gather}
where the expression
$\cd[\mu](\x_0,t)$  
is interpreted in the principal value sense.
\end{theorem}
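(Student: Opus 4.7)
The plan parallels the argument for the single-layer jump (Theorem \ref{thm:slpjump}). The PDE statement and the vanishing initial condition for $\cd[\mu]$ follow by differentiating under the integral: away from $\Gamma(t)$, the kernel $\partial_{\nu_\y}G(\x-\y,t-\tau)$ is smooth in $(\x,t)$ for all $(\y,\tau)\in\Gamma_T$ with $\tau<t$, and the apparent $\tau\to t$ singularity is controlled by the Gaussian decay in $\|\x-\y\|$ once $\x$ is off the boundary. The initial condition is immediate because the $\tau$-integral is over an empty interval at $t=0$.

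For the jump relations, the plan is to freeze the density at the boundary target point and split
\[
\cd[\mu](\x,t) \;=\; \mu(\x_0,t)\,\cd[\mathbf{1}](\x,t) \;+\; \cd[\mu-\mu(\x_0,t)](\x,t),
\]
so that the jump is carried entirely by the first term. The second term is continuous across $\Gamma(t)$ as $\x\to\x_0$, because the continuity of $\mu$ at $(\x_0,t)$ together with the local integrability of $\partial_{\nu_\y}G$ in $(s_\y,\tau)$ (after regularization by the density difference) gives convergence to the principal-value integral $\cd[\mu-\mu(\x_0,t)](\x_0,t)$. This reduces the problem to computing the limits of $\cd[\mathbf{1}](\x,t)$ from each side of $\Gamma(t)$.

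To evaluate $\cd[\mathbf{1}]$, I would further decompose it into a history piece on $\tau\in[0,t-\delta]$, which is smooth across the boundary because the kernel has no singularity when $t-\tau\ge\delta$, and a local piece on $\tau\in[t-\delta,t]$. In the local piece, the Gaussian factor concentrates the integrand near $\y=\x_0$, so I would parametrize $\Gamma(\tau)$ in a neighborhood of $\x_0$, translate and rotate so that the tangent plane at $(\x_0,t)$ is horizontal with outward normal $-e_2$, and replace the curved boundary by its tangent line with a controlled error. On the half-plane, the Gaussian integral in the tangential variable can be done explicitly and the resulting $\tau$-integral is elementary, producing the value $-\tfrac{1}{2}+o(1)$ as $\x\to\x_0$ from $\Omega(t)$ and $+\tfrac{1}{2}+o(1)$ from $\Omega^c(t)$. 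The limit $\delta\to 0$ then yields the stated jump of $\mp\tfrac12\mu(\x_0,t)$.

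The main obstacle is verifying that the half-space reduction is rigorous in the moving-geometry setting. The boundary $\Gamma(\tau)$ is both curved and, over the short interval $[t-\delta,t]$, moving with a continuous velocity, so the tangent-plane approximation must absorb errors coming from spatial curvature of $\Gamma(t)$, from the drift of $\Gamma(\tau)$ away from $\Gamma(t)$, and from the $O(\sqrt{\delta})$ growth of the region in $\y$ where the Gaussian is non-negligible. The standing $C^2$ assumption on $\Gamma(t)$ with continuous curvature and continuous boundary velocity ensures that each of these error terms is bounded by a positive power of $\delta$ times the corresponding modulus of smoothness, so they vanish after taking $\x\to\x_0$ first and $\delta\to 0$ second; this is where the smoothness hypothesis of the definition is really used.
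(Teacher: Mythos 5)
The paper itself offers no proof of Theorem~\ref{thm:dlpjump}: it states that the derivations are standard, easily deduced from the elliptic analogs, and defers to Friedman, Guenther--Lee and Pogorzelski, so there is no in-paper argument to compare against step by step. Your sketch is precisely the classical argument used in those references --- freeze the density at $(\x_0,t)$ so the jump is carried by $\cd[\mathbf{1}]$, check continuity of the regularized term via absolute integrability of the double-layer kernel on a $C^2$, continuously moving boundary, split off the smooth history part, and flatten the boundary locally so the local piece reduces to an explicit Gaussian/$\erfc$ integral yielding $\mp\tfrac12$, with the curvature, boundary-velocity and $O(\sqrt{\delta})$-window errors controlled by taking $\x\rightarrow\x_0$ before $\delta\rightarrow 0$ --- so the proposal is sound and consistent with the paper's (cited) proof.
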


\subsection{Integral representations for 
boundary value problems}
\label{sec:three}

Suppose now that we seek to solve a linear initial-boundary value 
problem and that
$u^{(V)}$ satisfies \eqref{heatfreebd}. If we let 
\be
u^{(B)}(\x,t) = u(\x,t)-u^{(V)}(\x,t),
\label{udecomp}
\ee
then $u^{(B)}$ satisfies 
the {\em homogeneous} heat equation with zero initial data with
boundary data modified by the contribution of $u^{(V)}$. This solution
can be represented by a suitable layer potential
with the unknown density restricted to the space-time boundary itself.
There are numerous possible representations for $u^{(B)}(\x,t)$, 
each leading to a different boundary integral equation. 
So-called direct methods are based 
on the application of Green's identity, from which the unknown is either
$u^{(B)}$  or its normal derivative 
\cite{arnold1989jcm,costabel,friedman1964}.
Here, we focus on the ``indirect'' approach, where the density does not 
necessarily have 
a simple physical interpretation \cite{pogorzelski}.
This approach is more flexible, and leads to 
well-conditioned integral equations for all of the boundary value problems
of interest.
\subsubsection{The Dirichlet Problem}
For the interior Dirichlet problem, it is standard to represent
$u^{(B)}(\x,t)$ as a double layer potential, with the full
solution written as
\begin{equation}\label{eq:ansatzdir}
u(\x,t)=u^{(V)}(\x,t)+u^{(B)}(\x,t)=u^{(V)}(\x,t)+\cd[\mu](\x,t).
\end{equation}
Letting $\x\in\Omega(t)$ approach $\x_0\in\Gamma(t)$ from the interior,
and applying the jump relation (\ref{eq:dlpjumpint}), we obtain
an integral equation for the density function $\mu$:
\begin{equation}\label{eq:intdir}
\frac{1}{2}\mu(\x_0,t)-\cd[\mu](\x_0,t)=u^{(V)}(\x_0,t)-f(\x_0,t) \, .
\end{equation}

The equation \eqref{eq:intdir} is a
Volterra equations of the second kind, and uniqueness is established
by a fixed point argument, analogous to Picard iteration for ordinary
differential equations.
Using the initial guess 
$\mu_0(\x_0,t) = 0$, consider the
sequence of density functions given by the recursion:
\begin{equation}\label{eq:fixedptiter}
\mu_{n+1}(\x_0,t)=2 (u^{(V)}(\x_0,t) - f(\x_0,t))+2\cd[\mu_n](\x_0,t).
\end{equation}
It is straightforward to prove that the sequence of functions ${\mu_n}$ 
converges uniformly on the boundary $\Gamma_T$, and that the limit 
$\mu^{*}$ is a solution to equation (\ref{eq:intdir}). 
We refer the reader to \cn{pogorzelski} for details.
Once the solution is obtained, substitution into
(\ref{eq:ansatzdir}) yields the desired solution $u(\x,t)$ at an arbitrary
location.

For the {\em exterior} Dirichlet problem, let us assume that
$u(\x,t)$ is bounded at infinity and that the initial data $u_0(\x)$ 
and forcing term $F(\x,t)$ have compact support. 
Using the same ansatz \eqref{eq:ansatzdir}, and letting $\x$ 
approach $\x_0\in\Gamma(t)$ from the exterior, the jump relation
(\ref{eq:dlpjumpext}) yields the second kind Volterra equation:
\begin{equation}\label{eq:extdir}
\frac{1}{2}\mu(\x_0,t)+D[\mu](\x_0,t)=f(\x_0,t)-u^{(V)}(\x_0,t).
\end{equation}
Existence and uniqueness can be proven by the same fixed point argument.
\subsubsection{The Neumann Problem}

For the Neumann problem, we 
represent $u^{(B)}(\x,t)$ as a single
layer potential:
\begin{equation}\label{eq:ansatzneu}
u(\x,t)=u^{(V)}(\x,t)+u^{(B)}(\x,t)=u^{(V)}(\x,t)+\cs[\sigma](\x,t).
\end{equation}
Letting $\x\in\Omega(t)$ approach a point $\x_0\in\Gamma(t)$ from the 
interior and applying the jump relation (\ref{eq:slpjumpint}), we obtain
the integral equation
\begin{equation}\label{eq:intneu}
  \frac{1}{2}\sigma(\x_0,t)+\frac{\partial}{\partial \nu_{\x_0}}\cs[\sigma](\x_0,t)
  =g(\x_0,t)-\frac{\partial}{\partial \x_0}u^{(V)}(\x_0,t).
\end{equation}
Existence and uniqueness for (\ref{eq:intneu}) are established
as above.

For the exterior problem, we assume again that $u(\x,t)$ is bounded 
at infinity and that the initial
data $u_0(\x)$ and the forcing term $F(\x,t)$ have compact support. 
Using the representation (\ref{eq:ansatzneu}), the
jump relation (\ref{eq:slpjumpext}) leads to the integral equation
\begin{equation}\label{eq:extneu}
  -\frac{1}{2}\sigma(\x_0,t)+\frac{\partial}{\partial \nu_{\x_0}}\cs[\sigma](\x_0,t)
  =g(\x_0,t)-\frac{\partial}{\partial \x_0}u^{(V)}(\x_0,t).
\end{equation}
\subsubsection{The Robin Problem}

For the Robin problem, it is convenient to use the same 
representation as for the Neumann problem.  Thus, we let
\begin{equation}\label{eq:ansatzrob}
u(\x,t)=u^{(V)}(\x,t)+u^{(B)}(\x,t)=u^{(V)}(\x,t)+\cs[\sigma](\x,t).
\end{equation}
For the interior problem, letting $\x$ approach $\x_0\in\Gamma(t)$, and using
the jump relation (\ref{eq:slpjumpint}), we obtain an
integral equation for the unknown density $\sigma$:
\begin{equation}\label{eq:introb}
\begin{split}
\frac{1}{2}\sigma(\x_0,t)&+\left(\frac{\partial}{\partial \nu_{\x_0}}+
\alpha(\x_0) \right)\cs[\sigma](\x_0,t) \\
&=h(\x_0,t)-\left(\frac{\partial}{\partial \x_0}+\alpha(\x_0)\right)
u^{(V)}(\x_0,t).
\end{split}
\end{equation}

For the exterior problem, the jump relation (\ref{eq:slpjumpext}) 
leads to the integral equation
\begin{equation}\label{eq:extrob}
\begin{split}
-\frac{1}{2}\sigma(\x_0,t)&+ \left(\frac{\partial}{\partial \nu_{\x_0}}+\alpha(\x_0) \right)\cs[\sigma](\x_0,t) \\
&=h(\x_0,t)-\left(\frac{\partial}{\partial \x_0}+\alpha(\x_0)\right)
u^{(V)}(\x_0,t) \, .
\end{split}
\end{equation}

It remains to develop efficient solvers for each of the integral equations 
above, which 
will require fast algorithms, high order accurate quadratures, and stable
marching schemes. The latter is important, just as for ordinary differential 
equations, because fixed point iteration is not a practical approach -
both because it is fully history dependent and because it 
converges extremely slowly.
\subsection{Decomposition of heat potentials \label{sec:decomp}}

A standard tool in analyzing heat potentials is to introduce a
cutoff parameter $\delta$
that divides the domain of integration in time into two parts:
a ``history" part, describing the influence at time $t$ of 
the layer or volumetric source densities during the interval $[0,t-\delta]$
and a ``local" part, describing the influence at time $t$ of 
the source potential densities during the most recent interval $[t-\delta,t]$.
For this, we let
\begin{equation}
\label{eq:decomp}
\begin{split}
  \cs[\sigma](\x,t)
  &=
\cs_L[\sigma](\x,t) + \cs_H[\sigma](\x,t), \\
\cd[\mu](\x,t) &=
\cd_L[\mu](\x,t) + \cd_H[\mu](\x,t), \\
\V[F](\x,t) &=
\V_L[F](\x,t) + \V_H[F](\x,t), 
\end{split}
\end{equation}
where
\begin{equation}
\label{eq:hist}
\begin{split}
\cs_H[\sigma](\x,t) &=
\int _{0}^{t-\delta} \int _{\Gamma(\tau)}
G(\x-\y,t-\tau) \;\sigma(\y,\tau) \; ds_{\y} \;d\tau \, , \\
\cd_H[\mu](\x,t) &=
\int_0^{t-\delta} \int_{\Gamma(\tau)} 
\frac{\partial}{\partial \nu_{\y}} 
G(\x-\y,t-\tau) \mu(\y,\tau) ds_{\y}d\tau\, , \\
\V_H[F](\x,t) &=
\int_0^{t-\delta} \int_{\Omega(\tau)} 
G(\x-\y,t-\tau) \: F(\y,\tau) d{\y}d\tau\, ,
\end{split}
\end{equation}
and
\begin{equation}
\label{eq:loc}
\begin{split}
\cs_L[\sigma](\x,t) &=
\int _{t-\delta}^t \int _{\Gamma(\tau)} G(\x-\y,t-\tau) \;
\sigma(\y,\tau) \; ds_{\y} \;d\tau \, , \\
\cd_L[\mu](\x,t) &=
\int_{t-\delta}^t \int_{\Gamma(\tau)} 
\frac{\partial}{\partial \nu_{\y}} 
G(\x-\y,t-\tau) \mu(\y,\tau) ds_{\y}d\tau\, , \\
\V_L[F](\x,t) &=
\int_{t-\delta}^t \int_{\Omega(\tau)} 
G(\x-\y,t-\tau) \; F(\y,\tau) d{\y}d\tau\, .
\end{split}
\end{equation}

Note that
the local part is short-ranged with respect to both space and time, 
but involves a singular kernel. 
The history part, on the other hand, is smooth but history-dependent.
Over the past several decades, a variety of algorithms have been introduced
to address both of these issues, which we present in the next section.
(See \cn{huang2006jcp} and \cn{veerapaneni2007sisc}
for a discussion of fast, integral equation methods in the one-dimensional
setting.)

\section{Fast algorithms for the evaluation of heat potentials} \label{sec:review} 

In this section, we review some of the algorithms that are
presently available for the rapid evaluation of initial, volume and layer heat 
potentials. The simplest case is that of an initial potential 
\eqref{initpot}, which we will refer to as the 
{\em continuous Gauss transform}. Its discrete analog 
takes the form 
\begin{equation}
V(\x_j) = 
\sum_{k=1}^M
G(\x_j - \y_k, \delta) \,  U_k
= \sum_{k=1}^M
\frac{e^{-\|\x_j - \y_k\|^2/4\delta}}{4 \pi \delta}
 U_k,
\label{dgt}
\end{equation}
for $j = 1,\dots,N$.
Algorithms for the efficient computation of 
such {\em discrete Gauss transforms} are generally referred 
to as fast Gauss transforms (FGTs)
\cite{greengard1991fgt,greengard1998nfgt,sampath2010pfgt,spivak2010sisc,tausch2009sisc}.

Volume and layer potentials are quite different in character, as they 
involve integrals over a space-time volume or boundary, respectively. 
Fast algorithms
for these potentials are typically based either on recursion (the Duhamel
principle) as suggested by eq. \eqref{vmarch}, or on hierarchical
compression over time based on the smoothing properties of 
the heat kernel. 

\subsection{The discrete and continuous FGT} \label{sec:dgt}

While the discrete Gauss transform requires $O(MN)$ work by 
direct summation, the FGT permits the evaluation of sums of the form
\eqref{dgt}
using only $O(M+N)$ work, independent of $\delta$.
Continuous versions of the FGT have been developed assuming the
function $u_0(\y)$ in \eqref{initpot} is
given on an adaptive unstructured triangulation
\cite{strain1994sisc}, or when it is given on a 
high-order, adaptive quad-tree based discretization
\cite{veerapaneni2008jcp,wang2018sisc}.

Following the discussion of 
\cite{wang2018sisc}, we briefly describe a hierarchical
version of the FGT that is largely insensitive to $\delta$
and takes as input a user-provided adaptive
quad-tree with either discrete or continuous volume sources,
as illustrated in Fig. \ref{surfadap}.

\begin{figure}[htbp]
\centering
\includegraphics[width=.8\textwidth]{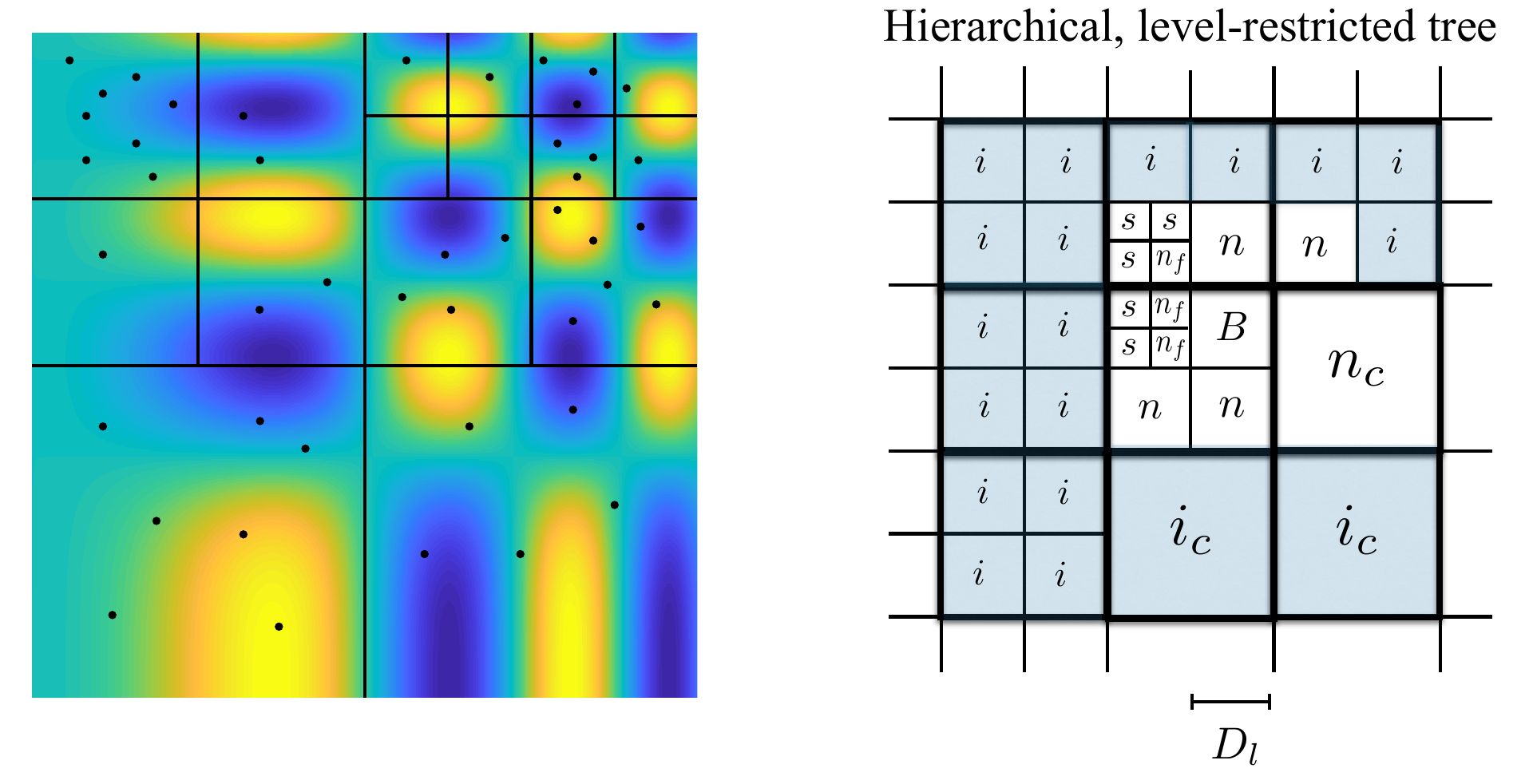}
\caption{The hierarchical FGT is able to handle
either volume or point sources discretized on a level-restricted
(balanced) quad-tree, discussed in section 
\ref{secfgt}.
On the left, we show a function and the corresponding resolving 
grid, together with a number of auxiliary point sources. On the right,
we indicate the kinds of interactions that must be accounted
for (see Definition \ref{ldef}).
Here, $B$ is a typical leaf node.
The boxes at the same level in the hierarchy that touch $B$ are 
called its {\em colleagues}, labeled $n$.
Denoting the parent of $B$ by $P$ (not shown),
the children of $P$'s colleagues
which do not touch $B$ define its interaction list (labeled $i$). 
In a level-restricted tree, a leaf node $B$
can have touching neighbors that are one level coarser,
labeled $n_c$, or one level finer, labeled $n_f$.
The boxes labeled $s$ are separated 
from $B$ but at a finer level, while the boxes
labeled $i_c$ are separated from $B$ but at a coarser level.}
\label{surfadap}
\end{figure}

\begin{remark}
In the original FGT \cite{greengard1991fgt}, 
a uniform grid is superimposed on the computational domain, with a 
box size of dimension $(r \sqrt{\delta})^d$, where $r \approx 1$.
Because of the exponential decay of the Gaussian, 
only nearby boxes need to be considered 
to achieve any desired precision and the corresponding field
is captured using a suitable
Hermite expansion (see section \ref{secfgt}).
For point sources, adaptivity is straightforward to achieve:
one simply assigns
source and target points to boxes on the uniform grid while
ignoring empty boxes, keeping track of the relevant neighbors 
for each box.
The total CPU time and storage is of the order $O(N+M)$.
(Some effort is required to do this without wasting storage.
One can, for example, refine a quad-tree uniformly 
to a level where the box dimension equals $(r \sqrt{\delta})^d$, 
pruning empty boxes on the way.)

Such a strategy cannot be used for initial potentials, however,
since there are typically no empty boxes. Rather, there is a continuous
function $u_0(\x)$ which has been specified on
a level-restricted adaptive quad-tree (which may have large regions where
the function is smooth but non-zero). 
\end{remark}

\subsection{Data structure} \label{sec:data} 

To be precise about our representation, following the presentation in
\cite{Ethridge2001sisc,wang2018sisc}, we assume that
the unit box $D$, centered at the origin contains the support
of $u_0(\x)$. We 
superimpose on $D$ a hierarchy of refinements using a {\em quad-tree}. 
For this, grid level 0 is defined to be $D$ itself, 
with grid level $l+1$ obtained recursively by subdividing each box at 
level $l$ into four equal parts. 
For any box $B$ at level $l$, the four boxes at level $l+1$ obtained 
by its subdivision will be referred to as its children.
Different subregions of $D$ may be refined to different depths.
For the sake of simplicity, we assume the tree structure
satisfies a standard condition: that 
two leaf nodes which share a boundary point must be no more than one 
refinement level apart. Such trees are generally called {\em level-restricted}
or {\em balanced} \cite{quadtree}.

On each leaf node $B$, we assume that we are given $u_0$ 
on a $k\times k$ tensor product Chebyshev
grid. From this data, we can construct
a ($k$-$1$)-degree polynomial 
approximation to $u$ on $B$ of the form
\begin{equation}
 u_B(\x) = u_B(x_1,x_2) \approx 
\sum_{j_1=0}^{k-1} \sum_{j_2=0}^{k-1} 
  c_B(j_1,j_2) \, T_{j_1}(x_1) T_{j_2}(x_2),
  \label{eq:leafnode}
\end{equation}
\noindent
where $T_j(x)$ denotes the Chebyshev polynomial of degree $j$ scaled
to the size of the box $B$.
In this approach, there are 
$k^2$ basis functions in the representation of $u_B(\x)$.

\begin{remark}
One could also use a polynomial basis
which satisfies the 
{\em total degree condition} $\{ j_1+j_2 \leq k-1 \}$
or the Euclidean truncation \cite{trefethen} 
$\{ j_1^2+j_2^2 \leq (k-1)^2 \}$. 
Both lead to a $k$-th order accurate
approximation as the mesh is refined. 
We will use the tensor product
approach since it is straightforward and
the coefficients can be computed 
efficiently using the cosine transform \cite{boyd2001}.
Finally, we note that for modest orders of accuracy, one could
use a uniform mesh on the leaf nodes instead of a Chebyshev mesh,
with a trivial modification to the method and code.
\end{remark}

Hierarchical fast algorithms have
been shown to be extremely effective on such data structures
in the elliptic setting
\cite{AskhamCerfon,cheng2006jcp,Ethridge2001sisc,langston2011camcs,GLee98,biros2015cicp,malhotra2016toms,wang2018sisc}.

\subsection{Adaptive refinement strategy}
\label{sec:funcapprox}

Let $B$ be a leaf node with $u_0(\x)$ given on that box by
a Chebyshev approximation of the form \eqref{eq:leafnode}.
One can then evaluate $u_B(\x)$ on a $2k \times 2k$ grid
covering $B$ and compute the discrete
$L_2$ error, denoted by $E_2$, over these target points.
If $E_2 > \epsilon_f$, for some user-specified tolerance $\epsilon_f$,
the leaf node $B$ is subdivided and the process repeated on each
of its children.  We will say that 
the tree obtained by a systematic use of this procedure 
starting with a single $k \times k$ grid on the unit box $D$
{\em resolves} $u_0(\x)$. However, it may not
be level-restricted. Assuming that 
the resolving tree so-constructed has $N$ leaf nodes and that its depth 
is of the order $O(\log N)$,
it is straightforward to balance the tree, enforcing the level-restriction,
in a subsequent sweep using $O(N \log N)$ time and storage
\cite{quadtree}. We omit the details.

One of the benefits of using a Chebyshev series in approximating
the function $u_0(\x)$ on a leaf node 
is that smoothness is manifested by rapid decay of the
coefficients with increasing $j_1,j_2$ in \eqref{eq:leafnode}.
Thus, one could also develop a heuristic for
building a resolving mesh based on the decay properties of the 
coefficients.  

\subsection{Hierarchical FGT} \label{secfgt}

Thorough descriptions of the FGT can be found in the references mentioned
above, namely 
\cite{greengard1991fgt,greengard1998nfgt,sampath2010pfgt,spivak2010sisc,tausch2009sisc}.
Here, we present only the main ideas underlying the hierarchical version
of \cite{wang2018sisc}. See also \cite{Lee2006}.

Let $h_n(x)$ denote the Hermite function defined by
\[ 
h_n(x)=(-1)^n \frac{d^n}{dx^n} e^{-x^2},\;\;\;x\in \bbR.
\]
Theese functions satisfy the relation
\begin{equation}
e^{-(x-y)^2/\delta}=\sum_{n=0}^{\infty} \frac{1}{n!}\left(\frac{y-y_0}{\sqrt{\delta}}\right)^n h_n\left(\frac{x-y_0}{\sqrt{\delta}}\right),
\label{mpole1d}
\end{equation}
where $y_0\in \bbR$ and $\delta>0$.
This Hermite expansion, centered at $y_0$, describes the 
Gaussian field $e^{-(x-y)^2/\delta}$ at the target $x$
due to the source at $y$.

In two dimensions, we make use
of multi-index notation. 
Letting $\x = (x_1,x_2) \in{\bf R}^2$, 
a multi-index is a pair of non-negative integers
$\alpha=(\alpha_1, \alpha_2)$ with the conventions:
\[
|\alpha| =\alpha_1+\alpha_2, \quad \alpha! =\alpha_1!\alpha_2!,  \quad
\x^{\alpha} =x_1^{\alpha_1} x_2^{\alpha_2}, \quad
D^{\alpha} =\partial_{x_1}^{\alpha_1}\partial_{x_2}^{\alpha_2}.
\]
If $p$ is an integer,
we say $\alpha \geq p$ if $\alpha_1,\alpha_2 \geq p$.
Multi-dimensional Hermite functions are defined by
\[
h_{\alpha}(\x)=h_{\alpha_1}(x_1)h_{\alpha_2}(x_2).
\]
The two-dimensional analog of
\eqref{mpole1d} is 
\begin{equation}
e^{-|\x-\y|^2/\delta}= \frac{1}{\alpha!}
\sum_{\alpha\geq 0} \left( \frac{\y-\y_0}{\sqrt{\delta}} \right)^{\alpha} 
h_{\alpha} \left( \frac{\x-\y_0}{\sqrt{\delta}} \right),
\label{mpole2d}
\end{equation}

The essential result from approximation theory used
in the FGT is the following.

\begin{lemma}\label{lemma:hexp} 
Let B be a box with center $\s_B$ and side length $r\sqrt{\delta}$ and let
the Gaussian field $\phi(\x)$ be defined by 
\begin{equation}  
\phi(\x)=\int_{B}e^{-\frac{|\x-\y|^2}{\delta}} f(\y)\, d\y \ +
\sum_{j=1}^{N_s} 
q_j e^{-\frac{|\x-\y_j|^2}{\delta}},
  \label{eq:volintb}
\end{equation}
where the $N_s$ source points $\{ \y_j \}$ lie in B.
Then,
\begin{equation}
\phi(\x)=
\sum_{\alpha\geq 0}A_{\alpha} h_{\alpha}\left(\frac{\x-\s_B}{\sqrt{\delta}}\right),
  \label{eq:hexp}
\end{equation}
where 
\begin{equation}
A_{\alpha}=
\frac{1}{\alpha!} \left( 
\int_{B} \left(\frac{\y-\s_B}{\sqrt{\delta}}\right)^{\alpha}f(\y)\, d\y + 
 \sum_{j=1}^{N_s} \left(\frac{\y_j-\s_B}{\sqrt{\delta}}\right)^{\alpha} q_j 
\right) \, .
  \label{eq:hcoeff}
\end{equation}
The error in truncating the Hermite expansion with $p^2$ terms is given by
\begin{equation}
 |E_H(p)|=\left|\sum_{\alpha\geq p}A_{\alpha} 
h_{\alpha}\left(\frac{\x-\s_B}{\sqrt{\delta}}\right)\right| \leq K^2Q_B(2S_r(p)+T_r(p))T_r(p),
  \label{eq:E_H(p)}
\end{equation}
where
\begin{equation}
Q_B=\int_B |f(\y)| d\y + 
\sum_{j=1}^{N_s} |q_j|,
  \label{eq:Q_B}
\end{equation}
\begin{equation}
S_r(p)=\sum_{n=0}^p \frac{r^n}{\sqrt{n!}},\;\;\;T_r(p)=\sum_{n=p}^{\infty} \frac{r^n}{\sqrt{n!}}.
  \label{eq:srtr}
\end{equation}
and $K < 1.09$.
\end{lemma}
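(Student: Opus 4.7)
The plan is to prove the expansion formula first, then derive the error bound by estimating the tail of a double series using the tensor product structure of Hermite functions.

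First I would establish the two-dimensional Taylor-type expansion \eqref{mpole2d} by tensorizing the one-dimensional identity \eqref{mpole1d}. Writing $e^{-|\x-\y|^2/\delta}=e^{-(x_1-y_1)^2/\delta}\,e^{-(x_2-y_2)^2/\delta}$ and expanding each factor about $(\y_0)_i$ with $y_0 = s_B$, the Cauchy product yields the multi-index form. Substituting this expansion into the definition \eqref{eq:volintb} of $\phi(\x)$ and interchanging the summation with the volume integral and the finite sum over point sources (justified by absolute convergence of the Hermite series on compact sets) produces \eqref{eq:hexp} with coefficients exactly as in \eqref{eq:hcoeff}. This part is essentially a bookkeeping calculation once \eqref{mpole2d} is in hand.

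The real work is the error bound. The truncated expansion retains the $p^2$ terms $\{\alpha:\alpha_1,\alpha_2<p\}$, so in the notation of the lemma the tail sum $\sum_{\alpha\ge p}$ means the complementary index set, which splits as
\begin{equation*}
\{\alpha_1\ge p\}\cup\{\alpha_1<p,\ \alpha_2\ge p\}.
\end{equation*}
For each coefficient, the geometric bound $|y_i-(s_B)_i|/\sqrt{\delta}\le r$ for $\y\in B$ (and likewise for source points) gives $|A_\alpha|\le \frac{Q_B}{\alpha!}\,r^{|\alpha|}$. For the Hermite functions, I would invoke the Cramér-type inequality $|h_n(x)|\le K\sqrt{n!}$ that underlies classical FGT analyses, with $K<1.09$ absorbing the uniform constant; multiplying the two coordinate bounds gives
\begin{equation*}
|A_\alpha h_\alpha((\x-s_B)/\sqrt{\delta})|\le K^2 Q_B\,\frac{r^{\alpha_1}}{\sqrt{\alpha_1!}}\,\frac{r^{\alpha_2}}{\sqrt{\alpha_2!}}.
\end{equation*}

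Finally I would sum this estimate over the two index regions. The first region contributes $T_r(p)\bigl(S_r(p)+T_r(p)\bigr)$ (splitting the inner sum at $\alpha_2=p$), and the second contributes $S_r(p)\,T_r(p)$. Adding them gives $T_r(p)\bigl(2S_r(p)+T_r(p)\bigr)$, which multiplied by $K^2Q_B$ is precisely \eqref{eq:E_H(p)}. The main obstacle, and the step I would verify most carefully, is obtaining the sharp uniform constant $K<1.09$ in the Cramér bound on the normalized Hermite functions; this is a classical but nontrivial estimate (obtained via the Hermite ODE or a saddle-point analysis of the generating function) that is truly responsible for the explicit numerical constant in the final error.
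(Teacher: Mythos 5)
Your route coincides with the one the paper actually relies on: the paper does not prove Lemma \ref{lemma:hexp} itself but defers to \cite{wang2018sisc}, and the argument there (using the corrected estimate of \cite{wan2006jcp}) is exactly your plan --- tensorize \eqref{mpole1d} to get \eqref{mpole2d}, exchange summation with the integral and the finite sum to read off \eqref{eq:hcoeff}, bound $|A_\alpha|$ geometrically, bound the Hermite functions by a Cram\'er-type inequality, and sum over the complement of the retained $p\times p$ index block, which, split as $\{\alpha_1\ge p\}\cup\{\alpha_1<p,\ \alpha_2\ge p\}$, gives $T_r(p)\bigl(S_r(p)+T_r(p)\bigr)+S_r(p)T_r(p)=\bigl(2S_r(p)+T_r(p)\bigr)T_r(p)$. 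Your reading of ``$\alpha\ge p$'' in \eqref{eq:E_H(p)} as that complement (rather than the paper's literal ``both components $\ge p$'') is indeed the interpretation needed to match the stated bound.

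One step is wrong as written, though it is repairable and you flagged it yourself: Cram\'er's inequality does not give $|h_n(x)|\le K\sqrt{n!}$. The correct statement is $|h_n(x)|\le K\,2^{n/2}\sqrt{n!}\,e^{-x^2/2}$ with $K<1.09$; the factor $2^{n/2}$ grows with $n$ and cannot be absorbed into $K$ (already $h_2(0)=2>1.09\sqrt{2!}$). Your chain still closes because your coefficient estimate is loose by just enough: since $B$ has side length $r\sqrt{\delta}$, each coordinate of a source in $B$ satisfies $|y_i-(s_B)_i|/\sqrt{\delta}\le r/2$, not $r$, so $|A_\alpha|\le Q_B\,(r/2)^{|\alpha|}/\alpha!$. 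Combining this with the correct Hermite bound gives $\bigl|A_\alpha h_\alpha\bigl((\x-\s_B)/\sqrt{\delta}\bigr)\bigr|\le K^2Q_B\,(r/\sqrt{2})^{\alpha_1+\alpha_2}/\sqrt{\alpha_1!\,\alpha_2!}\le K^2Q_B\,r^{\alpha_1}r^{\alpha_2}/\sqrt{\alpha_1!\,\alpha_2!}$, which is precisely the per-term estimate your summation step needs, and the rest of your argument then yields \eqref{eq:E_H(p)} unchanged.
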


For a proof, see \cite{wang2018sisc}.
The preceding formula alone is sufficient for developing
a fast algorithm. In the original FGT, 
after superimposing a uniform grid of boxes 
on the domain $D$
with box dimension $\sqrt{2 \delta}$, one proceeds in two steps.
First, one forms the Hermite expansion
induced by the sources in each box. 
The approximation error from truncating the Hermite expansions is 
controlled by eq.~\eqref{eq:E_H(p)}.
Second, for every target, one evaluates
the Hermite expansion due to the nearest $(2D+1)^2$ boxes, where
$D$ is a parameter determined by accuracy considerations. 
For this, note that the error incurred from ignoring more distant
interactions is of the order $O(e^{-4 D^2})$.
Thus, with $D=3$, double precision accuracy is achieved.
(The original FGT is a more elaborate scheme, but the complexity
result should be clear from this simple version.)

In the hierarchical FGT, 
we define a cut-off parameter $r_c$
so that 
$e^{-|\x-\y|^2/\delta}\leq \epsilon$, when $\| \x-\y\| \geq r_c\sqrt{\delta}$. 
We define the cut-off level in the FGT as the {\em finest} level where the box 
side length is greater than or equal to $r_c\sqrt{\delta}$.

\begin{definition} \label{ldef} 
[adapted from \cite{Ethridge2001sisc,wang2018sisc}]
In a level-restricted adaptive quad tree, for a leaf node $B$,
all nodes at the same level as $B$ 
which share a boundary point, including $B$ itself, are referred to as
{\em colleagues}. 
Leaf nodes at the level of 
$B$'s parent which share a boundary point with $B$ are referred to as its
{\em coarse neighbors}.
Leaf nodes one level finer than $B$
which share a boundary point with $B$ are referred to as its
{\em fine neighbors}.
Together, the union of the 
colleagues, coarse neighbors and fine neighbors of $B$
are referred to as $B$'s {\em neighbors}.
The {\em s-list} of a box $B$ consists of those children
of $B$'s colleagues which are not fine neighbors of $B$
(Fig. \ref{surfadap}).
The {\em interaction region} for $B$ consists of the area covered by
the neighbors of $B$'s parent, excluding the area covered by
$B$'s colleagues and coarse neighbors. The {\em interaction list}
for $B$ consists of those boxes in the interaction region which
are at the same refinement level (marked $i$ in Fig.\ref{surfadap}),
and is denoted by ${\cal I}(B)$.
Boxes at coarser levels will be referred to
as the {\em coarse interaction list}, denoted
by ${\cal I}_c(B)$
(marked $i_c$ in Fig.\ref{surfadap}).
Finally, if a source is in a box $B$ 
and a target lies outside $B$'s colleagues, then they are said to be 
{\em well-separated}.
\end{definition}

\begin{remark}
Let $B$ be a box in the quad-tree hierarchy 
with children denoted by $C_1,C_2,C_3,C_4$.
Then there is a linear operator ${\cal T}_{HH}$ which merges the 
expansions of four children 
into a single expansion for the parent. 

Since the Gaussian field induced by well-separated sources is
smooth, the field within $B$ induced by these sources
is well-represented by a Taylor series.
There is clearly a linear operator ${\cal T}_{LL}$ which
shifts such a Taylor series (a local expansion) 
from a parent box to its children.
Finally, 
for any box $F$ in $B$'s interaction list, 
there is a linear operator ${\cal T}_{HL}$ which converts
the Hermite expansion for box $F$ to its
induced local expansion in $B$.
\end{remark}

In the adaptive FGT,
leaf nodes need to 
handle far field interactions between boxes at different levels.
As illustrated in Fig. \ref{surfadap},
we need to incorporate the influence of the 
{\em s-list}  and 
the coarse interaction list on $B$.
For every box in the {\em s-list}, its Hermite expansion is rapidly 
convergent in $B$ and its 
influence can be computed by direct evaluation of the series. 
We also need to compute the dual interaction - namely the influence of
a leaf node $B$ on a box $F$ in the {\em s-list}.
We can
directly expand the influence of the source distribution in $B$
as a local expansion in $F$ from either point sources or a
continuous source distribution given in terms of a Chebyshev series.
The operator which maps the coefficients of the Chebyshev 
approximation of the density in a source box to the 
$p^2$ coefficients of the local expansion in a target box
can be precomputed and stored for each level in the quad-tree hierarchy.
Inspection of Fig. \ref{surfadap}, the translation invariance of the kernel,
separation of variables,
and a simple counting argument show that
this requires $O(k p L)$ work and storage, where 
$k$ is the order of polynomial approximation,
$p$ is the order of the local expansion, and 
$L$ is the number of levels.
We refer the reader to \cite{wang2018sisc} for a more 
thorough description of the scheme.

In the hierarchical FGT, as in the FMM,
two passes are executed. In the ``upward pass",
one first forms Hermite expansions 
using Lemma \ref{lemma:hexp} for all leaf nodes. Using the adaptive 
quad-tree data structure, for each interior node $P$,
one recursively merges the four Hermite expansions from $P$'s children
as a single expansion about the center of $P$.
The relevant translation operator that accomplishes this can be found
in \cite{wang2018sisc}.  In the ``downward pass", a {\rm Taylor expansion} 
for the root node (the domain $D$ itself) is initialized to zero.
Beginning at the root node, 
one shifts the Taylor expansion from each parent box at level $l$ 
to its children at level $l+1$. One then increments the local 
expansion for every box by adding in the contributions from all boxes 
in its interaction list. Finally,
for each leaf node, the local contributions are computed 
from the coarse neighbors, fine neighbors, $s$-list and coarse
interaction list, as sketched out
above. Precomputed tables make these local interactions especially 
efficient for continuous densities expanded in a Chebyshev series.
The total work required is of the order $O(N)$, where $N$ is the number
of grid points in all leaf nodes covering the domain $D$.

\subsection{Accelerations} \label{sec:accel}

Instead of Hermite expansions,
it was shown in \cite{greengard1998nfgt,spivak2010sisc} that an expansion in plane waves
provides additional acceleration. The essential observation is that
\begin{equation}
\sum_{\alpha\geq 0}A_{\alpha} h_{\alpha}\left(\frac{\x-\s_B}{\sqrt{\delta}}
\right) =
\int_{\mathbb{R}^2}
w(\k) e^{-\frac{\|\k\|^2}{4}} 
e^{i \k \cdot (\x - \s^B)/\sqrt{\delta}}  \,d\k
\, ,
\label{pwformula}
\end{equation}
where 
\begin{equation}
w(\k) = w(k_1,k_2) = \sum_{\alpha\geq 0} A_{\alpha} (-i)^{|\alpha|} 
k_1^{\alpha_1} k_{2}^{\alpha_2}. 
\end{equation}
This follows directly from the Fourier transform relation
\begin{equation}
e^{-\|\x\|^2} = \left(\frac{1}{4\pi}\right)
\int_{\mathbb{R}^2} e^{-\frac{\|\k\|^2}{4}}e^{i\k \cdot \x}\, d\k. 
\end{equation}
In practice, of course,
\eqref{pwformula} needs to be discretized. 
Because of the smoothness and exponential decay of the integrand, the 
trapezoidal rule is extremely effective. 
The advantage of \eqref{pwformula} over the Hermite representation
is that it provides a 
basis in which translation is diagonal.

The use of precomputed tables to accelerate the calculation of local
interactions is discussed in 
\cite{AskhamCerfon,cheng2006jcp,Ethridge2001sisc,langston2011camcs,biros2015cicp,GLee98,malhotra2016toms} 
for FMMs, and extended to the FGT in \cite{wang2018sisc}.
Many other accelerations are possible and we refer the reader to
\cite{sampath2010pfgt,spivak2010sisc,wang2018sisc} for further discussion.

\subsection{Periodic FGT}

The FGT is easily modified to handle periodic conditions
on the unit square $D = [-0.5,0.5]^2$.
Without entering into details, this is
accomplished by tiling the entire plane
${\bf R}^2$ with copies of the source distribution.
At the root node, this requires a lattice sum calculation to 
determine the Gaussian field in $D$ induced
by all well-separated copies of $D$. After that, it requires only
minor changes to the definitions of the neighbor and 
interaction lists to account for the nearest periodic images.
A complete discussion can be found in \cite{wang2018sisc}.

\subsection{Boundary FGT}

We turn now to the evaluation of {\em boundary Gauss transforms}
of the form 
\begin{equation}
\begin{split}
 {\cal S}[\sigma](\x) &=
\int_{\Gamma} e^{-\frac{|\x-\y(s)|^2}{\delta}} \tilde{\sigma}(\y(s)) \, ds_{\y} \, , \\
 {\cal D}[\mu](\x) &=
\int_{\Gamma} \frac{\partial}{\partial n_{\y(s)}} 
e^{-\frac{|\x-\y(s)|^2}{\delta}} \tilde{\mu}(\y(s)) \, ds_{\y}  \, .
\end{split}
\label{eq:bdryint}
\end{equation}
We assume that $\Gamma$ itself 
is described as the union of $M_b$ boundary segments:
\[ \Gamma = \cup_{j=1}^{M_b} \Gamma_j, \]
defined in terms of $k$th order Legendre polynomials in arc length: 
\begin{equation}
 \Gamma_j = \Gamma_j(s) = (x^1_j(s),x^2_j(s)): \ 
 x^1_j(s) = \sum_{n=0}^{k-1}  x^1_j(n) P_n(s), \ 
x^2_j(s) = \sum_{n=0}^{k-1}  x^2_j(n) P_n(s),  
\label{bdrydiscrete}
\end{equation}
with $-1 \leq s \leq 1$.
We also assume
that $\Gamma$ has been discretized in a manner that is 
commensurate with the underlying data structure used above:
an adaptive, level-restricted tree where the
length of $\Gamma_j$
is of approximately the same size as the 
box size of the leaf node that contains 
the center point $\c_j$ of $\Gamma_j$.
The densities $\sigma$ and $\mu$ in \eqref{eq:bdryint}
are assumed to be given in the form:
\[ \sigma_j(s) = \sum_{n=0}^{k-1}  \sigma_j(n) P_n(s), \quad
\mu_j(s) = \sum_{n=0}^{k-1} \mu_j(n) P_n(s).  \]

\begin{figure}[htbp]
\centering
\includegraphics[width=.4\textwidth]{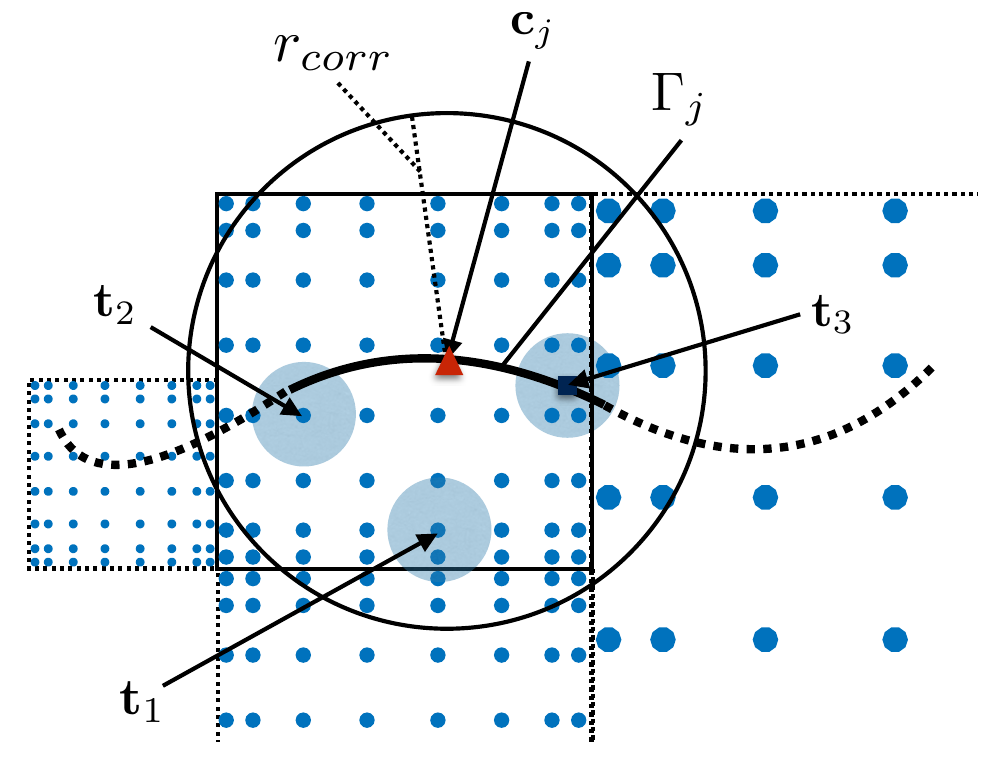}
\caption{
A boundary segment $\Gamma_j$ wth center $\c_j$ 
lying in a leaf node $B$ of side length $r_l$. 
Depending on the value of $\delta$, 
a boundary integral of the form  \eqref{eq:bdryint}
is either resolved by its discretization using standard Gauss-Legendre
quadrature with $k$ nodes on $\Gamma_j$, or negligible outside the disk
centered at $\c_j$ with radius $r_{corr}$.
In the latter case, when 
$\delta$ is small and the Gaussian is sharply peaked, 
a simple interpolatory rule can be used to 
compute the integral either on or off the boundary. 
The shaded disk in the figure around
each of the three target points $\t_i$ indicates the region where 
a Gaussian centered at $\t_i$ is less than a user-prescribed tolerance
$\epsilon$. Thus, 
the intersection of the shaded circles with $\Gamma_j$
indicates which subsegment of $\Gamma_j$ is relevant to the corresponding
target.
$\t_1$ is sufficiently far that its contribution can be
ignored. 
$\t_2$ and $\t_3$ are near and on the boundary, respectively.
Interpolation of the density from $\Gamma_j$ to the relevant
subsegment requires $O(k^2)$ work per target point
[Adapted from \cite{wang2018sisc}].
\label{bfgtfig}
}
\end{figure}

In the boundary FGT, suppose that we apply
composite Gauss-Legendre quadrature to the 
integrals in \eqref{eq:bdryint}. 
The accuracy of the quadrature depends strongly on the smoothness of
the integrand, and therefore 
on the parameter $\delta$. Fortunately, the rapid decay of the
Gaussian makes the problem easily tractable for any $\delta$.
To see why, consider a boundary segment $\Gamma_j$, centered at $\c_j$ 
in a leaf box $B$ of commensurate size (Fig. \ref{bfgtfig}). 
If $\delta$ is sufficiently large, say 
${\delta} > |\Gamma_j|^2$, then
spectral accuracy is achieved with no further correction.
If, on the other hand, 
${\delta} < |\Gamma_j|^2$, then the integral needs to 
be corrected only within some disk of radius
$r_{corr} =  O(|\Gamma_j|)$.
This correction can be computed rapidly and accurately by 
determining the subinterval of $\Gamma_j$ that makes a non-negligible 
contribution to the integral and interpolating the original density
to $k_c$ Gauss-Legendre nodes on this subinterval.
Setting $k_c$ to 20 yields approximately fourteen digits of accuracy
assuming the density $\sigma(s)$ is locally smooth.
We refer the reader to \cite{wang2018sisc} for further details.

\subsection{Solving the free-space heat equation by marching} \label{sec:march}

Suppose now that we wish to evaluate the solution to the initial
value problem in free space with compactly supported initial data
$u_0(\x)$ via marching, according to \eqref{vmarch}, repeated here 
for convenience:
\[
\begin{aligned}
u^{(V)}(\x,t+\delta) = 
&\int_{\mathbb{R}^2} G(\x-\y,\delta) \, 
u^{(V)}(\y,t) \, d\y + \\
&\int_{t}^{t+\delta}
\int_{\Omega(\tau)} G(\x-\y,t+\delta-\tau) \, F(\y,\tau) \, d\y d\tau.
\end{aligned}
\]

This requires that we extend
our spatial data structure as time progresses
beyond the initial domain $\Omega(0)$ to capture the spread of
$u^{(V)}(\y,t)$ with time.
Moreover, to obtain a viable numerical method, 
the number of additional degrees of freedom should grow slowly
with time.
Fortunately, doing so is quite straightforward (as observed 
previously in \cite{strain1994sisc}) and requires only the
following Lemma
(see Fig. \ref{fig:freespace_grid}).

\begin{lemma} \label{thm:resolve}
\cite{wang2017thesis}
  Suppose that a collection of sources is contained
  in a box $D$ in $\bbR^d$ and that the box $B$ of side length $R$
  is {\it well-separated} 
  from $D$ - that is separated from $D$ by at least $R$,
  For any $\epsilon <1$, let $\phi(\x,t)$ denote the 
  field induced in $B$ by the sources in $D$.
  Then a Taylor series approximation
with $p=\mathcal{O}\left(\log\left(\frac{1}{\epsilon}\right)\right)$ terms
is sufficient to resolve $\phi(\x,t)$ to precision $\epsilon$
for all $t>0$.
\end{lemma}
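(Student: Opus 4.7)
The plan is to bound the tail of the Taylor expansion of
$\phi(\x,t) = \int_D G(\x-\y,t)\sigma(\y)\,d\y$
around the center $\x_0$ of $B$, uniformly in $t > 0$, and to show the tail falls below $\eps$ once $p = \mathcal{O}(\log(1/\eps))$ terms are retained. Since the Gaussian kernel is entire in $\x$, the Taylor series converges automatically; the real work is making the convergence rate uniform in $t$, and the well-separation hypothesis $\|\x_0-\y\|\ge R$ is what enables this.

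The starting point is the identity
$D^{\alpha}_{\x} G(\x_0-\y,t) = (4\pi t)^{-1}(-1)^{|\alpha|}(4t)^{-|\alpha|/2}\, h_{\alpha}\!\bigl((\x_0-\y)/\sqrt{4t}\bigr)$,
combined with the Cramer-type estimate $|h_{\alpha}(u)| \le K\sqrt{2^{|\alpha|}\alpha!}\,e^{-\|u\|^2/2}$. Substituting into the Taylor series, using $|(\x-\x_0)_i| \le R/2$ for $\x \in B$ and $\|\x_0-\y\| \ge R$ for $\y \in D$, the $\alpha$-th term in the expansion of $G(\x-\y,t)$ is bounded by
\[
\frac{K}{4\pi t}\cdot \frac{r^{|\alpha|}}{\sqrt{\alpha!}}\, e^{-r^2}, \qquad r := R/\sqrt{8t}.
\]
Integrating against $\sigma$ and summing over $|\alpha| \ge p$, the remainder in the Taylor approximation of $\phi$ is controlled by $\|\sigma\|_1 \cdot (4\pi t)^{-1}\, e^{-r^2} \sum_{|\alpha| \ge p} r^{|\alpha|}/\sqrt{\alpha!}$. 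The elementary inequality $e^{-r^2}/t = e^{-R^2/(8t)}/t \le 8 e^{-1}/R^2$ (maximum attained at $t = R^2/8$) shows that the prefactor is uniformly bounded in $t$, reducing the problem to controlling the combinatorial sum with its $e^{-r^2}$ damping.

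The next step is a case split based on $r^2$ versus $\log(1/\eps)$. In the moderate regime $r^2 \le \log(1/\eps)$ (equivalently $t \ge R^2/(8\log(1/\eps))$), Stirling shows that $r^n/\sqrt{n!}$ peaks near $n \approx 2r^2$ and that, once $n \ge Cr^2$ for some absolute constant $C > 2e$, successive terms shrink by a factor of at most $1/2$. The geometric tail past $p = Cr^2 = \mathcal{O}(\log(1/\eps))$ is then easily bounded and plugged back yields error $\mathcal{O}(\eps)$. In the rapid-decay regime $r^2 > \log(1/\eps)$, one does not need to truncate carefully at all: the pointwise bound $G(\x-\y,t) \le e^{-R^2/(4t)}/(4\pi t) = e^{-2r^2}/(4\pi t)$, combined again with $e^{-2r^2}/t \le 4/(eR^2)$, gives $|\phi(\x,t)| \le C'\|\sigma\|_1 \eps \log(1/\eps)/R^2$, so the trivial zero-term approximation already achieves precision $\mathcal{O}(\eps)$.

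The main obstacle, and what distinguishes this from a naive Taylor remainder estimate, is the tension in the second regime: the combinatorial sum $\sum_n r^n/\sqrt{n!}$ itself grows like $e^{r^2/2}$, so one might fear the Taylor approximation is useless when $r$ is large. The resolution is the $e^{-r^2}$ factor coming from evaluating the Hermite function at a well-separated point; retaining the Gaussian factor $e^{-\|u\|^2/2}$ in the Cramer bound (rather than using the cruder $|h_{\alpha}(u)|\le K\sqrt{\alpha!}$ of Lemma~\ref{lemma:hexp}) is essential. Once this cancellation is made precise, the two regimes together deliver the claimed uniform-in-$t$ bound with $p = \mathcal{O}(\log(1/\eps))$.
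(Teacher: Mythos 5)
Your proposal is correct in substance and rests on the same underlying dichotomy as the paper's proof, but it executes that dichotomy with different machinery, so a comparison is worth recording. The paper works directly with the time-integrated volume potential and splits the time integral at $t-\tau = CR^2$ with $C = 1/(2\log(1/(\epsilon R^2)))$: the recent-in-time contributions are simply discarded, because at distance at least $R$ the kernel is pointwise below $\epsilon$ there, and for the older contributions it cites the known fast-Gauss-transform estimates \cite{strain1991vfgt,wan2006jcp}, which give a truncation error of order $r_p^p$ with $r_p=\sqrt{e\log(1/(\epsilon R^2))/p}$, hence $p=\mathcal{O}(\log(1/\epsilon))$. You instead prove the bound per time slice (a single Gaussian convolution), deriving the tail estimate from scratch via Hermite derivatives and Cramer's inequality with the Gaussian factor retained, and your case split $r^2\lessgtr\log(1/\epsilon)$ with $r=R/\sqrt{8t}$ is exactly the paper's time cutoff in disguise. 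Your route buys self-containedness and an explicit uniform-in-$t$ statement for each slice; the paper's buys the volumetric and layer cases directly, so you should add one line observing that the degree-$p$ truncation of the volume potential is the $\tau$-integral of the per-slice truncations, whence your uniform per-slice bound integrates to the same conclusion with prefactor $\|F\|_1$. Minor loose ends, none fatal: the terms $r^n/\sqrt{n!}$ peak near $n\approx r^2$, not $2r^2$; in two dimensions the sum over multi-indices with $|\alpha|=n$ carries an extra factor of order $\sqrt{(n+1)2^n}$ relative to $1/\sqrt{n!}$, which effectively replaces $r$ by $\sqrt{2}\,r$ and changes only constants; and both your final bound and the paper's carry harmless prefactors ($\|\sigma\|_1/R^2$, respectively $\|F\|_1$ and logarithms of $\epsilon R^2$), so ``precision $\epsilon$'' should be read modulo such factors in either argument.
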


\begin{proof}
  We consider the case of a volumetric source.
  The same proof applies in the case of layer potentials and initial
  potentials.
  Thus, suppose
  \be\label{slp}
  \phi(\x,t)
  \int_0^t\int_{D} \frac{1}{4\pi(t-\tau)}e^{-\frac{\|\x-\y\|^2}{4(t-\tau)}}
  F(\y,\tau) \, d{\y} \, d\tau.
  \ee
  Note, first, that the Green's function is less than $\epsilon$
  for $R^2 > 1/\epsilon$. 
  Since $B$ is {\it well-separated}, we have
  \be\label{resolve1}
  \|\x-\y\| \ge R, \quad \text{for}\,\, \x\in B,\, \y\in \Gamma.
  \ee
  Thus,
  \be\label{resolve2}
  G(\x-\y,t-\tau)\le \frac{1}{4\pi(t-\tau)}e^{-\frac{R^2}{4(t-\tau)}},
  \quad \text{for}\,\, \x\in B,\, \y\in \Gamma, \, \tau<t.
  \ee
  Assuming $R^2 < 1/\epsilon$, 
  the right side of \eqref{resolve2} is less than $\epsilon$ when
  \be\label{resolve3}
  t-\tau<\frac{R^2}{2\log\left(\frac{1}{\epsilon R^2}\right)}.
  \ee
  Letting \be
  C=\frac{1}{2\log \left(\frac{1}{\epsilon R^2}\right)},
  \ee  
  we have that
  \be
  \phi(\x,t) = \int_0^{t- C R^2} \int_{D} 
  G(\x-\y,t-\tau) F(\y,\tau) \, d{\y} \, d\tau \ + \mathcal{O}(\epsilon)
  \ee
  assuming that $t>C R^2$.
  It has been shown, however (\cite[eq. 9]{strain1991vfgt} and 
  \cite[eq. 16]{wan2006jcp}) that 
  a Taylor series of order $p$ is sufficient
  to resolve the heat kernel in $B$ for any $\tau \in [0,t-R^2 C]$ with an 
  error of the order 
  \be
  |E_p| \leq 2Kp^{-1/4}\frac{r_p^p}{1-r_p},
  \ee
  where $K=1.09(2\pi)^{-1/4}$ and 
  \be
  r_p=\frac{R}{\sqrt{2C R^2}}\sqrt{\frac{e}{p}}=\sqrt{\frac{e}{2p C}}
  =\sqrt{\frac{e \log\left(\frac{1}{\epsilon R^2}\right)}{p}}
  <\sqrt{\frac{e(2n+1) \log\left(\frac{1}{\epsilon}\right)}{p}}.
  \ee
  This converges extremely rapidly with $p$ once $r_p<1$ (i.e.,
  $p>e(2n+1)\log\left(\frac{1}{\epsilon}\right)$).
  The result follows immediately with a net error in the 
  Taylor series bounded by
  \[ |E_p| \| F \|_1, \] 
  where
  \[ \| F \|_1 = \int_0^{t- R^2 C} \int_{D} 
  |F(\y,\tau)| \, d{\y} \, d\tau .
  \]
\end{proof}

\begin{figure}[htbp]
\centering
\includegraphics[width=.8\textwidth]{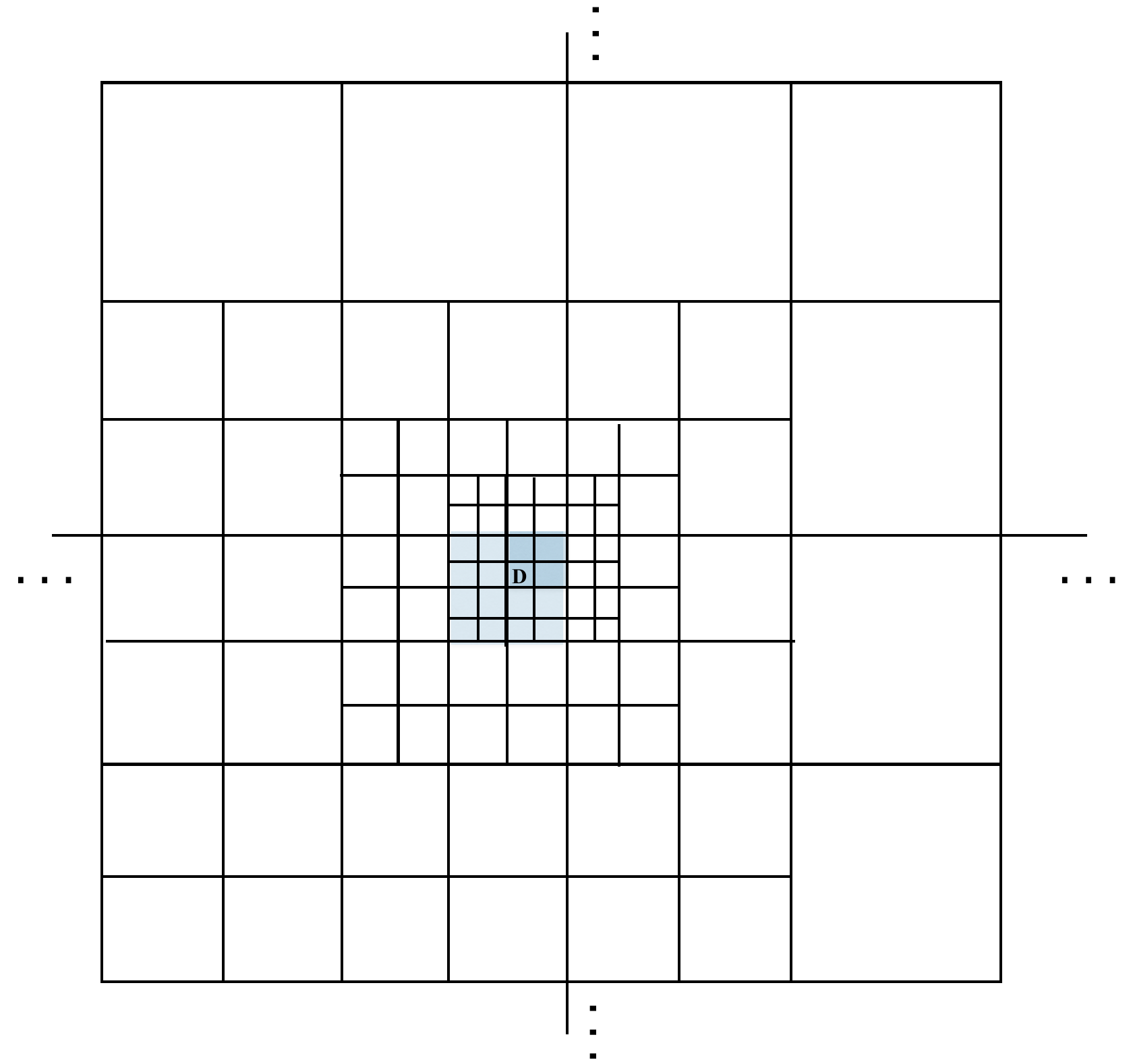}
\caption{The hierarchical FGT is able to resolve the solution
of the heat equation in free space using a level-restricted
quad tree by extending the coverage of $\mathbb{R}^2$ from the 
domain $D$ with side length $\rho$ containing the sources to a domain of size 
approximately $\rho + \sqrt{T\log(T/\epsilon)}$.
This captures the entirety of the 
solution out to time $T$ with error of the order $\epsilon$, as noted in
\cite{strain1994sisc}.
Because of Lemma \ref{thm:resolve}, the total number of additional 
grid points
needed is approximately $O((\log T + \log \epsilon)^2)$.
Thus, capturing the solution in free space adds remarkably little to the
cost in terms of CPU time or storage.
}
\label{fig:freespace_grid}
\end{figure}

\begin{remark} \label{ndegrmk}
Within the box $D$, the number of spatial degrees of freedom, denoted by $N$,
is determined by the number of degrees of freedom needed to resolve the 
data $F(\x,t)$. Subsequent convolution with the heat kernel smooths 
out this data over time. Thus, in the 
absence of subsequent volumetric sources, 
the number of degrees of freedom needed within $D$ decreases 
because of the smoothing properties of the heat kernel. 
From Lemma \ref{thm:resolve}, it can be shown that the 
number of additional grid points needed 
to capture the solution diffusing out into 
free space grows with the final time $T$ at the rate
$O((\log T + \log \epsilon)^2)$.
(see Fig. \ref{fig:freespace_grid}).
\end{remark}

\begin{remark} \label{erroraccum}
One difficulty with marching according to \eqref{vmarch}
is that worst case error analysis suggests we are
committing an error of the order $\epsilon$ at each time step. Thus, the
total error that could be as much as $N_T \epsilon$ for $N_T$ time steps.
As noted in \cite{strain1994sisc}, however,
the errors are 
``high frequency" and rapidly decaying in time,
so that repeated use of the Gauss transform 
does not result in such a severe accumulation of error in practice.
(Spectral fast algorithms, described below, avoid this potential 
accumulation of error.)
\end{remark}

\subsection{The rapid evaluation of layer potentials using the FGT}
\label{sec:layerhist}

We turn now to the evaluation of single and double layer potentials
on a space-time boundary $\Gamma_T$ contained within a region $D$,
focusing on the history part 
$\cs_H[\sigma](\x,t)$. (The double layer contribution
$\cd_H[\mu](\x,t)$ is handled in the same manner.)
From the semigroup property of the heat kernel, 
\be\label{semigroupproperty}
G(\x-\y,t+\delta-\tau)=\int_{\mathbb{R}^d}G(\x-\z,\delta)G(\z-\y,t-\tau)d\z,
\ee
it is straightforward to 
see that
\be
\ba
\cs_H[\sigma](\x,t+\delta)&=
\int _{0}^{t} \int _{\Gamma(\tau)}
G(\x-\y,t+\delta-\tau) \;\sigma(\y,\tau) \; ds_{\y} \;d\tau \,  \\
&=\int_{\mathbb{R}^d}G(\x-\y,\delta)\cs[\sigma](\y,t) d\y.
\ea
\ee
In other words, the history part of the single layer potential
at time $t+\delta$ is simply the initial
potential acting on the data
$\cs[\sigma](\x,t)$ for a time step $\delta$. 
This gives rise to two questions:
(1) how is $\cs[\sigma](\x,t)$ to be resolved on a spatial mesh 
and (2) how will we evaluate the local part $\cs_L[\sigma](\x,t)$.
The latter question is discussed below in section
\ref{sec:loc}. As for the first question, it should be noted that
$\cs[\sigma](\x,t)$ is smooth inside and outside of $\Omega(t)$, assuming 
the domain is smooth, but it is not smooth {\em across} the boundary.
While it is possible for the adaptive FGT to handle such nonsmooth data,
it is more convenient in practice to modify the algorithm slightly
to avoid this issue.
The FGT, like volume integral versions of the FMM, is much faster when
it acts on data that is resolved on an adaptive spatial mesh as a
piecewise polynomial because
of the ability to use precomputed tables of local interactions, 
as mentioned in section \ref{sec:accel}.

\subsubsection{The bootstrapping method} \label{sec:bootstrap}

To enable working only with smooth data,
we further decompose the history part into two parts -- a 
{\em near history} part and a {\em far history} part.

\begin{equation}
\label{histdecomp}
\begin{split}
  \cs_H[\sigma](\x,t)
  &=
\cs_{NH}[\sigma](\x,t) + \cs_{FH}[\sigma](\x,t), \\
\cd_H[\mu](\x,t) &=
\cd_{NH}[\mu](\x,t) + \cd_{FH}[\mu](\x,t), 
\end{split}
\end{equation}
where
\begin{equation}
\label{bootstrap}
\begin{split}
\cs_{NH}[\sigma](\x,t) &=
\int _{t-2\delta}^{t-\delta} \int _{\Gamma(\tau)}
G(\x-\y,t-\tau) \;\sigma(\y,\tau) \; ds_{\y} \;d\tau \, , \\
\cd_{NH}[\mu](\x,t) &=
\int_{t-2\delta}^{t-\delta} \int_{\Gamma(\tau)} 
\frac{\partial}{\partial \nu_{\y}} G(\x-\y,t-\tau) \mu(\y,\tau) ds_{\y}d\tau\, ,
\end{split}
\end{equation}
and
\begin{equation}
\label{smooth}
\begin{split}
\cs_{FH}[\sigma](\x,t) &=
\int _{0}^{t-2\delta} \int _{\Gamma(\tau)} G(\x-\y,t-\tau) \;
\sigma(\y,\tau) \; ds_{\y} \;d\tau \, , \\
\cd_{FH}[\mu](\x,t) &=
\int_0^{t-2\delta} \int_{\Gamma(\tau)} 
\frac{\partial}{\partial \nu_{\y}} G(\x-\y,t-\tau) \mu(\y,\tau) ds_{\y}d\tau\, .
\end{split}
\end{equation}
In the bootstrapping version of the recurrence,
the near history is computed by quadrature and incorporated
into the local part in section \ref{sec:loc}.
For the far history, we have the following lemma, showing how to 
propagate the far history as a recurrence for a function tabulated
on an adaptive spatial mesh.

\begin{lemma}\label{initrep}
  Let 
\[ u_{FH}(\x,t+\delta) = \cs_{FH}[\sigma](\x,t+\delta), \quad
  v_{FH}(\x,t+\delta) = \cd_{FH}[\mu](\x,t+\delta).
\]
  Then
  \be\label{sinitrep}
  u_{FH}(\x,t+\delta) = 
  \I\left[ u_{FH}(\cdot,t)+\cs_{NH}[\sigma](\cdot,t)\right] (\x,\delta) \,,
  \ee
  and
  \be\label{dinitrep}
  v_{FH}(\x,t+\delta) = 
  \I\left[ v_{FH}(\cdot,t)+\cd_{NH}[\mu](\cdot,t)\right] (\x,\delta) \, .
  \ee
  Here, the domain of integration for the initial potential is $\bbR^2$
  or, to precision $\epsilon$, the extended domain of size
  $|D| + \sqrt{T \log(T/\epsilon)}$, as discussed in section 
  \ref{sec:march}.
  The distinction between $u_{FH}, v_{FH}$ and 
  $\cs_{FH}[\sigma], \cd_{FH}[\mu]$
  is that the former are functions sampled on a spatial mesh, while
  the latter are layer potentials defined in terms of the densities
  $\sigma$ and $\mu$.
\end{lemma}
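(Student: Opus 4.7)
The plan is to derive both recurrences directly from the semigroup property of the heat kernel
\eqref{semigroupproperty}, together with Fubini's theorem and the decomposition
$\cs_H = \cs_{NH} + \cs_{FH}$ at the earlier time $t$. I will carry out the argument
for the single layer case; the double layer case is identical after replacing
$G$ by $\partial_{\nu_\y} G$ and $\sigma$ by $\mu$, since the semigroup identity
\eqref{semigroupproperty} can be differentiated in $\y$ under the integral sign without altering the structure of the calculation.

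First, I would unfold the definition of $\cs_{FH}[\sigma](\x,t+\delta)$ from
\eqref{smooth}. By definition, the $\tau$-integration runs over
$[0,(t+\delta)-2\delta]=[0,t-\delta]$, so
\[
\cs_{FH}[\sigma](\x,t+\delta)
=\int_0^{t-\delta}\int_{\Gamma(\tau)} G(\x-\y,t+\delta-\tau)\,\sigma(\y,\tau)\,ds_\y\,d\tau.
\]
Next, I would apply the semigroup identity \eqref{semigroupproperty} to replace
$G(\x-\y,t+\delta-\tau)$ by the convolution
$\int_{\bbR^2} G(\x-\z,\delta)G(\z-\y,t-\tau)\,d\z$, and then swap the order of integration
using Fubini (which is justified by the positivity and rapid decay of the Gaussian together with
the assumed continuity of $\sigma$ on $\Gamma_T$). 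This produces
\[
\cs_{FH}[\sigma](\x,t+\delta)
=\int_{\bbR^2} G(\x-\z,\delta)\left[\int_0^{t-\delta}\int_{\Gamma(\tau)} G(\z-\y,t-\tau)\sigma(\y,\tau)\,ds_\y\,d\tau\right]d\z.
\]

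The second step is to recognize the bracketed inner integral as a layer potential evaluated
at the earlier time $t$. Its $\tau$-range is exactly $[0,t-\delta]$, which is the range of
$\cs_H[\sigma](\cdot,t)$ as defined in \eqref{eq:hist}. Decomposing this history at time $t$
into its near and far components via \eqref{histdecomp}, the bracket equals
$\cs_{FH}[\sigma](\z,t)+\cs_{NH}[\sigma](\z,t)$, which in the notation of the lemma is
$u_{FH}(\z,t)+\cs_{NH}[\sigma](\z,t)$. Recognizing the outer integral as the initial
potential \eqref{initpot} at time $\delta$ yields \eqref{sinitrep}. The double layer identity
\eqref{dinitrep} follows by the same argument applied to $\partial_{\nu_\y}G$.

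The remaining statement concerns replacing $\bbR^2$ by the enlarged domain
$|D|+\sqrt{T\log(T/\epsilon)}$ in the outer $\z$-integral with error at most $\epsilon$.
This is not a new estimate but a direct application of the decay of $G(\x-\z,\delta)$
and Lemma \ref{thm:resolve}, exactly as invoked in Section~\ref{sec:march}:
outside a box of that enlarged size, the integrand is pointwise smaller than $\epsilon$
and the contribution is negligible. I expect the only subtle point to be bookkeeping
in the Fubini step, since $\Gamma(\tau)$ moves with $\tau$; once one observes that the
iterated integral is absolutely convergent (the Gaussian in $\z$ gives an integrable majorant
uniformly in $\y,\tau$), the interchange is legitimate and the rest of the argument is
algebraic identification of the known operators $\I$, $\cs_H$, $\cs_{NH}$, $\cs_{FH}$.
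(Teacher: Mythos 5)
Your proposal is correct and follows essentially the same route as the paper: unfold the definition of $\cs_{FH}[\sigma](\x,t+\delta)$ over $\tau\in[0,t-\delta]$, insert the semigroup identity \eqref{semigroupproperty}, exchange the order of integration, identify the inner integral as $\cs_H[\sigma](\cdot,t)=\cs_{FH}[\sigma](\cdot,t)+\cs_{NH}[\sigma](\cdot,t)$, and recognize the outer integral as the initial potential $\I[\cdot](\x,\delta)$, with the double layer case handled analogously. Your added remarks on the Fubini justification and on truncating the $\z$-domain via Lemma \ref{thm:resolve} are consistent with the paper and only make explicit points it leaves implicit.
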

\begin{proof}
  We restrict our attention to \eqref{sinitrep}, since
  the proof of the second identity is analogous.
  By direct calculation, we have
  \[
  \ba
  u_{FH}(\x,t+\delta) &= 
\int _{0}^{t-\delta} \int _{\Gamma(\tau)}
G(\x-\y,t+\delta-\tau) \;\sigma(\y,\tau) \; ds_{\y} \;d\tau \,  \\
&=\int _{0}^{t-\delta} \int _{\Gamma(\tau)}
\left(\int_{\mathbb{R}^2}G(\x-\z,\delta)G(\z-\y,t-\tau)d\z\right)
\sigma(\y,\tau) \; ds_{\y} \;d\tau \, \\
&=\int_{\mathbb{R}^2}G(\x-\z,\delta)
\left(\int _{0}^{t-\delta} \int _{\Gamma(\tau)} G(\z-\y,t-\tau)
\sigma(\y,\tau) \; ds_{\y} \;d\tau\right)d\z\\
&=\int_{\mathbb{R}^2}G(\x-\z,\delta)\cs_H[\sigma](\z,t) d\z\\
&=\int_{\mathbb{R}^2}G(\x-\z,\delta)\left(\cs_{FH}[\sigma](\z,t)+\cs_{NH}[\sigma](\z,t)\right) d\z\\
&=\I\left[u_{FH}(\cdot,t)+\cs_{NH}[\sigma](\cdot,t)\right](\x,\delta).
  \ea
  \]
  The first equality follows from the definition of $u_{FH}(\x,t+\delta)$,
  the second follows from \eqref{semigroupproperty}, the third follows from
  the change of integration order, and the last follows from the definition
  of $\cs_H$.
\end{proof}
In short, the {\em far history} at time $t+\delta$ is the initial potential
acting on data that corresponds to the far history at time $t$ 
incremented by the near history at time $t$. The latter
``bootstrap" contribution has evolved under heat flow
for a time at least $\delta$, so that it is a smooth function
of $\x$ at time $t$. It is evaluated on the adaptive spatial grid at time
$t$ by the methods discussed in the next section.

In the bootstrapping framework, we can be more precise about how many
spatial degrees of freedom are needed to resolve the far history part 
of the layer potential as well as the near history increment.
A remarkable fact is that the number of points needed
for resolution in a domain $\Omega$ is {\em of the same order as the
number of points on the boundary}. This follows from
Lemma \ref{thm:resolve}, illustrated when
the domain is a square with a uniformly discretized boundary
(Fig. \ref{countadap}).
For this case, as shown in \cite{wang2017thesis},
we suppose that there are $3 \cdot 2^L$ 
subintervals on each side and $p$ 
points on each interval. Then the number of 
points used to discretize $\Gamma$ is about $N = 12p \cdot 2^L$.
Each box in the adaptive discretization of the domain
(excluding the boxes that touch the boundary) satisfies
the condition of 
Lemma \ref{thm:resolve}. 
At level $L$, there are approximately $3 \cdot 2^L \cdot 2 \cdot 4=2N/p$
boxes. For coarser levels, the number of boxes decreases at least by
a factor of $2$, since the size of the box increases by a factor of $2$. 
Thus, the total number of boxes is bounded by
$2N/p \cdot \left(1+\frac{1}{2}+\frac{1}{4}+\ldots\right) = 4N/p$. 
With a $p \times p$ subgrid in every box, the total number of interior 
points is bounded by $4N/p \times p^2 = 4pN$.
  
\begin{figure}[thbp]
\centering
\includegraphics[width=.4\textwidth]{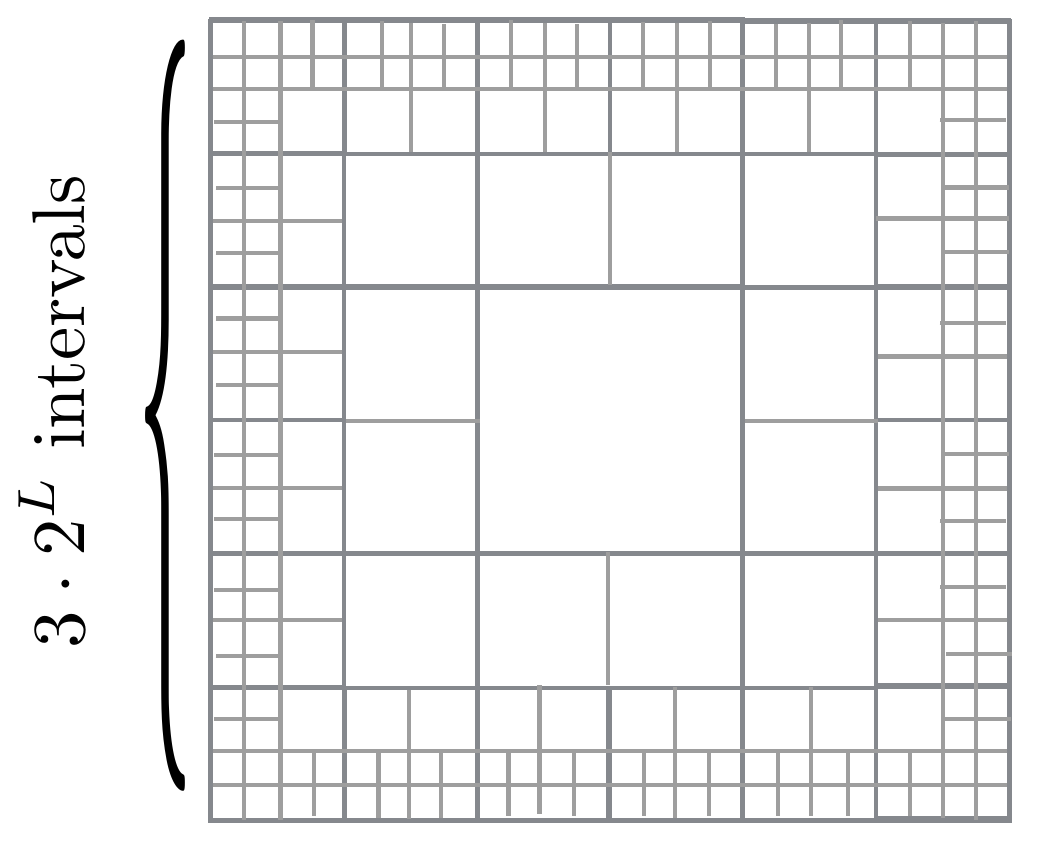}
\caption{Number of interior boxes needed
to resolve a layer potential when the boundary
is a square with each side divided into $3\cdot 2^L$ equal subintervals.}
\label{countadap}
\end{figure}

As noted in section \ref{sec:march},
 the number of degrees of freedom needed to capture the history
part in the exterior of the bounding box $D$
is no more than an additional   
$O((\log T + \log \epsilon)^2)$ grid points.

\begin{remark}
Since the FGT has linear computational complexity.
the cost of capturing the history part for all time steps
is of the order
$\mathcal{O}(N_SN_T\log\left(\frac{1}{\epsilon}\right))$.
As a result, it is possible to solve {\em homogeneous} boundary value 
problems in optimal time using
the bootstrapping scheme together with an adaptive
data structure and a suitable fast algorithm.
\end{remark}

\begin{remark}
A version of the bootstrapping scheme was described earlier
\cite{brattkus1992siap}, but without spatial adaptivity
(or a hierarchical fast algorithm).
Using an adaptive triangulation to capture the history part
of a layer potential was described previously in 
\cite{strain1994sisc}, but without bootstrapping.
\end{remark}

\subsubsection{Evaluation of the local part of heat potentials} \label{sec:loc}

While the history part of heat potentials dominates the cost of their
evaulation without suitable fast algorithms, a careful treatment of the 
local part of layer potentials (including the near history part)
is needed to handle the singular behavior of the Green's function
for short times. 
Volume heat potentials, it turns out, are much simpler to deal with.
The local part of a volume potential
\[
\V_L[F](\x,t) =
\int_{t-\delta}^t \int_{\Omega(\tau)} 
G(\x-\y,t-\tau) \; F(\y,\tau) d{\y}d\tau
\]
involves an inner integral of the form
which is a mollification of $F(\y,\tau)$. Thus, assuming 
$F$ is itself a smooth function, a $k$th order quadrature rule in time
yields $k$th order accuracy. As a simple example, the trapezoidal rule
takes the form:
\[
\V_L[F](\x,t) =
\frac{\delta}{2} \left(
F(\x,t) + \int_{\Omega(t-\delta)} 
G(\x-\y,\delta) \; F(\y,t-\delta) d{\y} \right)
+ O(\delta^3),
\]
where we have made use of the ``$\delta$-function" property of the 
heat kernel in the 
upper limit of integration $\tau \rightarrow t$,
as discussed in \cite{epperson2,strain1994sisc}.

For layer potentials, there are numerous approaches that have
been developed over the years to handle the local quadrature issues.
They include asymptotic methods, Gauss-Jacobi integration,
``full" product integration, and hybrid asymptotic/numerical methods.
Asymptotic methods are the easiest to use and various
expansions can be found in the literature for layer potentials
on surface \cite{greengard1990cpam,lin1993thesis,li2009sisc} or
off-surface. The following result is taken from \cite{wang2019acom}.
Related formulae can be found in \cite{strain1994sisc}.

\begin{lemma}\label{lemma:locasym}
  Let $\Gamma(\tau)$, $\sigma$ and $\mu$ be four times differentiable.
  Then
\begin{equation}\label{eq:slocasym}
\cs_\delta[\sigma](\x,t)=\frac{1}{2}
\sqrt{\frac{\delta}{\pi}}E_{3/2}\left(\frac{c^2}{4}\right)
\left(1+\frac{\kappa-v}{2}\cdot c\sqrt{\delta}\right)
\cdot\sigma(\x_0,t)+O(\delta^{3/2}),
\end{equation}
\begin{equation}\label{eq:dlocasym}
\begin{split}
\cd_\delta[\mu](\x,t)=&-\sqrt{\frac{\delta}{\pi}}E_{3/2}\left(\frac{c^2}{4}\right)\frac{\kappa-v}{4}\mu(\x_0,t) \\
&-\frac{\sgn(c)}{2}\erfc\left(\frac{|c|}{2}\right)(1+c\sqrt{\delta}\cdot\frac{\kappa-v}{2})\mu(\x_0,t)+O(\delta),
\end{split}
\end{equation}
where $\x=\x_0+c\sqrt{\delta}\cdot\n$, $\x_0\in\Gamma(t)$ is a point on 
the boundary at time t, $\n$ is the inward unit normal vector 
at $\x_0$, $\kappa$ is the curvature at $\x_0$, and $v$ is the normal 
component of the velocity of $\Gamma$ at $\x_0$. 
Here, $E_{3/2}$ denotes the exponential integral function of order 3/2:
\begin{equation}
E_{3/2}(x)=\int_1^{\infty}\frac{e^{-xt}}{t^{3/2}}dt \, 
\end{equation}
and $\erfc(x)=\frac{2}{\sqrt{\pi}}\int_x^{\infty} e^{-t^2} dt$ is the 
complementary error function.
\end{lemma}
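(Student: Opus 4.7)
The plan is to exploit the concentration of the Gaussian at short times and expand everything asymptotically in local coordinates at $\x_0$. Write $s = t-\tau$ so that the local integrals become
\[
\cs_\delta[\sigma](\x,t) = \int_0^\delta\!\!\int_{\Gamma(t-s)} G(\x-\y,s)\,\sigma(\y,t-s)\,ds_\y\,ds,
\]
and analogously for $\cd_\delta[\mu]$. Introduce an orthonormal frame $\{\tau_{\x_0},\n\}$ at $\x_0\in\Gamma(t)$ and parametrize a neighborhood of $\x_0$ on $\Gamma(t-s)$ by arc length $\xi$, using the smoothness assumption on $\Gamma$ together with a continuous normal velocity to obtain
\[
\y(\xi,s) = \x_0 + \xi\,\tau_{\x_0} + \Bigl(-\tfrac{\kappa}{2}\xi^2 - v\,s\Bigr)\n + O(\xi^3 + s\xi + s^2),
\]
with arc-length element $ds_\y = (1+O(\xi^2))\,d\xi$ and outward unit normal $\n_\y = \n + \kappa\xi\,\tau_{\x_0} + O(\xi^2 + s)$. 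Using $\x = \x_0 + c\sqrt{\delta}\,\n$, expand
\[
|\x-\y|^2 = \xi^2 + c^2\delta + c\sqrt{\delta}\,(\kappa\xi^2 + 2vs) + O(\delta^2 + \xi^4),
\]
\[
(\x-\y)\cdot\n_\y = c\sqrt{\delta} + \tfrac{\kappa}{2}\xi^2 + vs - \kappa\xi^2 + O(\cdots) = c\sqrt{\delta} - \tfrac{\kappa}{2}\xi^2 + vs + O(\cdots).
\]

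The second step is to introduce the parabolic rescaling $\xi = \sqrt{\delta}\,\eta$, $s = \delta u$, so that $d\xi\,ds = \delta^{3/2}d\eta\,du$ and
\[
\frac{|\x-\y|^2}{4s} = \frac{\eta^2 + c^2}{4u} + \sqrt{\delta}\cdot\frac{c(\kappa\eta^2 + 2vu)}{4u} + O(\delta).
\]
Substituting into $G$, Taylor-expanding the exponential in $\sqrt{\delta}$, and expanding $\sigma(\y,t-s) = \sigma(\x_0,t) + O(\sqrt{\delta})$, one checks that the leading contribution is
\[
\cs_\delta[\sigma](\x,t) \sim \frac{\sigma(\x_0,t)\sqrt{\delta}}{4\pi}\int_0^1\!\!\int_{-\infty}^\infty \frac{e^{-(\eta^2+c^2)/(4u)}}{u}\,d\eta\,du.
\]
The $\eta$-integral gives $2\sqrt{\pi u}$, and the substitution $w = 1/u$ converts the remaining $u$-integral into $E_{3/2}(c^2/4)$, producing the advertised leading term $\tfrac12\sqrt{\delta/\pi}\,E_{3/2}(c^2/4)\,\sigma(\x_0,t)$. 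The next-order $\sqrt{\delta}$ correction is obtained by collecting the $O(\sqrt{\delta})$ terms in the exponent multiplied by the leading measure; symmetry in $\eta$ kills odd contributions, and the even ones combine to give the factor $\tfrac{\kappa-v}{2}\cdot c\sqrt{\delta}$, with the sign pattern arising from the opposing roles of curvature (in $|\x-\y|^2$) and normal velocity (in the time integration).

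For $\cd_\delta[\mu]$, the same rescaling is applied to $\partial_{\nu_\y}G = \frac{(\x-\y)\cdot\n_\y}{8\pi s^2}e^{-|\x-\y|^2/(4s)}$. After substituting the expansions of $(\x-\y)\cdot\n_\y$ and $|\x-\y|^2$, the near-singular prefactor $(\x-\y)\cdot\n_\y/s^2$ scales as $\delta^{-1/2}$ (through the $c\sqrt{\delta}$ piece) and as $\delta^0$ (through the $\kappa\eta^2$ and $vu$ pieces), yielding an $O(1)$ leading term and an $O(\sqrt{\delta})$ correction. The $O(1)$ term reduces, after the $\eta$-integral, to
\[
-\frac{c\,\mu(\x_0,t)}{2\sqrt{\pi}}\int_0^1 \frac{e^{-c^2/(4u)}}{u^{3/2}}\,du,
\]
and a further substitution $w = c^2/(4u)$ converts this into $-\tfrac{\sgn(c)}{2}\erfc(|c|/2)\,\mu(\x_0,t)$, consistent with the classical jump relations of Theorem \ref{thm:dlpjump} in the limits $c\to 0^\pm$. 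The remaining $\sqrt{\delta}$ corrections, after cancellations, regroup into the two pieces displayed in \eqref{eq:dlocasym}, with the $\tfrac{\kappa-v}{4}E_{3/2}(c^2/4)$ coefficient emerging from the $\kappa\eta^2 + 2vu$ terms in the exponent, and the $\tfrac{\kappa-v}{2}\erfc(|c|/2)$ coefficient from the corresponding corrections to $(\x-\y)\cdot\n_\y$.

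The main technical obstacle is bookkeeping: the curvature $\kappa$ appears both in the boundary parametrization (via the $-\tfrac{\kappa}{2}\xi^2$ term) and in the outward normal $\n_\y$, while the velocity $v$ enters through the evolution of $\Gamma(t-s)$, and these contributions partially cancel. One must carefully separate the $O(\delta^0)$, $O(\sqrt{\delta})$, and remainder terms, justify interchange of the symbolic expansion with the integration over a shrinking neighborhood of $\x_0$ (the complement contributes exponentially small error because $|\x-\y|^2/s$ is large there, under the $C^4$ assumption), and verify that the odd-in-$\eta$ pieces vanish so that all nontrivial corrections reduce to the two standard integrals producing $E_{3/2}$ and $\erfc$.
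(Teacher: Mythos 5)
The paper itself gives no proof of Lemma \ref{lemma:locasym}: the result is quoted directly from \cite{wang2019acom}, so there is no in-paper argument to compare against. Your strategy --- localize near $\x_0$, parametrize $\Gamma(t-s)$ in the frame at $\x_0$, rescale parabolically $\xi=\sqrt{\delta}\,\eta$, $s=\delta u$, expand in powers of $\sqrt{\delta}$, and reduce the resulting Gaussian moments to $E_{3/2}$ and $\erfc$ --- is the standard route and is the method of the cited reference; your leading-order single-layer computation is correct, including the identification $\int_0^1 u^{-1/2}e^{-c^2/(4u)}\,du=E_{3/2}(c^2/4)$, which yields $\tfrac12\sqrt{\delta/\pi}\,E_{3/2}(c^2/4)\,\sigma(\x_0,t)$.

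Two things, however, keep the proposal from being a proof of the stated result. First, the real content of the lemma is the $O(\sqrt{\delta})$ coefficients --- the specific combination $\kappa-v$ and the factors $\tfrac{\kappa-v}{2}$ and $\tfrac{\kappa-v}{4}$ --- and these are never computed: you assert that the even-in-$\eta$ contributions ``combine to give'' them and that the double-layer corrections ``regroup after cancellations,'' which is precisely the bookkeeping you yourself flag as the main obstacle. Second, the double-layer calculation is internally inconsistent exactly where the bookkeeping matters. The kernel differentiates along the outward normal $\nu_{\y}$, while the lemma's $\n$ is the inward normal at $\x_0$; at leading order $\nu_{\y}\approx-\n$, so $(\x-\y)\cdot\nu_{\y}\approx-c\sqrt{\delta}$, whereas you write the ``outward'' normal as $+\n+\kappa\xi\,\tau_{\x_0}+\cdots$ and the dot product as $+c\sqrt{\delta}$, then reinstate a minus sign in the reduced integral without explanation. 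Moreover, your displayed reduced integral $-\frac{c\,\mu(\x_0,t)}{2\sqrt{\pi}}\int_0^1 u^{-3/2}e^{-c^2/(4u)}\,du$ actually equals $-\sgn(c)\,\erfc(|c|/2)\,\mu(\x_0,t)$, i.e.\ twice the correct leading term; the prefactor produced by $\frac{1}{8\pi s^2}$ after the $\xi$-integral $2\sqrt{\pi s}$ is $\frac{1}{4\sqrt{\pi}\,s^{3/2}}$, so the coefficient should be $-\frac{c}{4\sqrt{\pi}}$. With these sign and factor slips, and with the $\kappa-v$ terms left undetermined, the proposal establishes the structural form of \eqref{eq:slocasym}--\eqref{eq:dlocasym} but not the actual coefficients, which is what the lemma asserts; carrying the expansion of $|\x-\y|^2$, $(\x-\y)\cdot\nu_{\y}$, the arc-length element, and the time-shifted density consistently to first order in $\sqrt{\delta}$ (with a fixed sign convention for $\kappa$, $v$, and $\n$) is the missing step.
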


The on-surface asymptotics in
\cite{greengard1990cpam,lin1993thesis,li2009sisc} may be recovered by taking
the limit $\x\rightarrow \x_0$ (i.e. $c\rightarrow 0$) in \eqref{eq:slocasym}
and \eqref{eq:dlocasym}.

\begin{corollary}\label{cor:slocasym0}
Let $\Gamma(\tau)$, $\sigma$ and $\mu$ be four times differentiable,
and $\x_0\in\Gamma(t)$ be a point on the boundary at time t. Then
\begin{align}
\cs_\epsilon[\sigma](\x_0,t)&=\sqrt{\frac{\epsilon}{\pi}}\sigma(\x_0,t)\nonumber \\ 
&+\epsilon^{3/2}\left(\frac{(\kappa-v)^2\sigma(\x_0,t)}{12\sqrt{\pi}}
+\frac{\sigma_t-\sigma_{ss}}{3\sqrt{\pi}}\right)+O(\epsilon^{5/2}), \label{eq:slocasym0} \\ 
\cd_\epsilon[\mu](\x_0,t)&=-\sqrt{\frac{\epsilon}{\pi}}
\frac{\kappa-v}{2}\mu(\x_0,t)+O(\epsilon^{3/2}), \label{eq:dlocasym0}
\end{align}
where $s$ is the arc length parameter.
\end{corollary}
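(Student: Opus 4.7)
The plan is to work directly from the definitions of $\cs_\epsilon[\sigma](\x_0,t)$ and $\cd_\epsilon[\mu](\x_0,t)$, rather than attempting to recover the $\epsilon^{3/2}$ term by simply taking $c\to 0$ in Lemma \ref{lemma:locasym} (which only gives the leading order, since the stated remainder there is $O(\delta^{3/2})$). Since the heat kernel is sharply peaked as $\tau\to t$, the contribution to the integral from arc length $|s|\gg\sqrt{\epsilon}\log(1/\epsilon)^{1/2}$ is exponentially small, so up to negligible error only a shrinking neighborhood of $\x_0$ matters, and we may freely Taylor-expand all geometric and density data there.

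Concretely, I would introduce a Lagrangian arc length parametrization $\y=\y(s,\tau)$ of the moving boundary near $\x_0$, with $\y(0,t)=\x_0$, and write
\[
\y(s,\tau)-\x_0 = s\,\tau_{\x_0} + \left(\tfrac{\kappa}{2}s^2 - v(t-\tau)\right)\n + O(\|(s,\sqrt{t-\tau})\|^3),
\]
where $\kappa$ is the curvature and $v$ the normal velocity at $\x_0$. Expanding the arc length element $ds_\y=(1+O(s^2)+O((t-\tau)^2))\,ds$, the density $\sigma(\y(s,\tau),\tau)=\sigma(\x_0,t)+s\sigma_s-(t-\tau)\sigma_t+\tfrac{s^2}{2}\sigma_{ss}+\cdots$, and $|\x_0-\y|^2 = s^2 + (\tfrac{\kappa}{2}s^2-v(t-\tau))^2 + \cdots$, I would substitute into $\cs_\epsilon$, then rescale $s = 2\sqrt{t-\tau}\,\xi$ so the inner integral reduces to Gaussian moments $\int_\mathbb{R}\xi^{2k}e^{-\xi^2}d\xi$ multiplied by polynomials in $\sqrt{t-\tau}$. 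The leading moment reproduces $\sqrt{\epsilon/\pi}\,\sigma(\x_0,t)$; the next order packages the $\xi^4$ moment (from the quartic correction to $|\x_0-\y|^2$), the $\xi^2$ moment (from $\sigma_{ss}s^2/2$), and the $\xi^0$ moment weighted by $(t-\tau)$ (from $\sigma_t$) into the stated $\epsilon^{3/2}$ coefficient after one final integration in $t-\tau$.

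For the double layer, I would use $\frac{\partial}{\partial\nu_\y}G(\x_0-\y,t-\tau)=\frac{(\x_0-\y)\cdot\n_\y}{2(t-\tau)}\,G(\x_0-\y,t-\tau)$. The inner product $(\x_0-\y)\cdot\n_\y$, which for a static boundary produces the classical $-\kappa s^2/2$ weakening the singularity, is now augmented by $v(t-\tau)$ from the boundary motion, giving a leading factor $v(t-\tau)-\kappa s^2/2$. After division by $2(t-\tau)$ and the same rescaling, the Gaussian moments $\int e^{-\xi^2}d\xi$ and $\int\xi^2e^{-\xi^2}d\xi$ combine to yield exactly the $-\tfrac{1}{2}\sqrt{\epsilon/\pi}(\kappa-v)\mu(\x_0,t)$ leading term, with the remaining contributions absorbed into $O(\epsilon^{3/2})$.

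The main obstacle I anticipate is bookkeeping: the $\epsilon^{3/2}$ coefficient in \eqref{eq:slocasym0} depends on $\kappa$ and $v$ only through the perfect square $(\kappa-v)^2$, but the raw expansion produces separate $\kappa^2$, $v^2$, and $\kappa v$ contributions coming from different sources (the quartic correction to $|\x_0-\y|^2$, the Jacobian correction to $ds_\y$, and cross-terms between the normal displacement and the quadratic Taylor data of $\sigma$). Verifying that these recombine cleanly into $(\kappa-v)^2/12$, and that all spurious $\sigma_s$ contributions vanish by parity of the $\xi$-moments, will require the most careful algebra; the $\sigma_t$ and $\sigma_{ss}$ coefficients, by contrast, fall out of single Gaussian moments and should present no difficulty.
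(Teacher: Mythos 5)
Your plan takes a genuinely different route from the paper. The paper does not rederive Corollary \ref{cor:slocasym0}: it obtains the leading terms by letting $c\to 0$ in Lemma \ref{lemma:locasym} (which is quoted from \cite{wang2019acom}), and the $\epsilon^{3/2}$ term in \eqref{eq:slocasym0} is imported from the on-surface asymptotics of \cite{greengard1990cpam,lin1993thesis,li2009sisc}. Your observation that the $c\to 0$ limit alone cannot produce that term, because the remainder in \eqref{eq:slocasym} is already $O(\delta^{3/2})$, is exactly right and is precisely why the paper leans on the references. Your direct derivation --- localize using the decay of the kernel, Taylor-expand the geometry, arc length element and density about $(\x_0,t)$, rescale $s=2\sqrt{t-\tau}\,\xi$, reduce to Gaussian moments, and integrate once more in $t-\tau$ --- is the mechanism by which those cited expansions are actually proved, so what you lose in brevity you gain in self-containedness; it also makes transparent why odd moments kill all $\sigma_s$ contributions and where the $\sigma_t$, $\sigma_{ss}$ and curvature/velocity terms enter.

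Two concrete gaps in the bookkeeping you flag. First, your displayed conventions are mutually inconsistent: with $\n$ the inward normal and $\y(s,\tau)-\x_0=s\,\tau_{\x_0}+\bigl(\tfrac{\kappa}{2}s^2-v(t-\tau)\bigr)\n+\cdots$, a careful expansion of $(\x_0-\y)\cdot\nu_{\y}$ --- which must include the variation $\nu_{\y}\approx\nu_{\x_0}+\kappa s\,\tau_{\x_0}$, since that variation is what converts the naive $+\kappa s^2/2$ into the classical net $-\kappa s^2/2$ --- gives $-\kappa s^2/2-v(t-\tau)$, not your claimed $v(t-\tau)-\kappa s^2/2$; only the latter reproduces $-\tfrac{1}{2}\sqrt{\epsilon/\pi}(\kappa-v)\mu$, the former yields $\kappa+v$, so you must pin down the sign convention for $v$ (inward versus outward normal component of the boundary velocity) before the $\kappa-v$ combination can be claimed. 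Second, run the flat, static sanity check before trusting signs: for $\Gamma$ a straight line with $\kappa=v=0$, taking $\sigma(\y,\tau)=\tau$ gives exactly $\sqrt{\epsilon/\pi}\,t-\epsilon^{3/2}/(3\sqrt{\pi})$ and taking $\sigma=s^2$ gives $+2\epsilon^{3/2}/(3\sqrt{\pi})$, so in the conventions you set up the $\epsilon^{3/2}$ coefficient comes out proportional to $\sigma_{ss}-\sigma_t$, opposite in sign to the $(\sigma_t-\sigma_{ss})$ term as printed in \eqref{eq:slocasym0}. You will therefore have to reconcile your expansion against such exactly computable cases and against the sign conventions of \cite{wang2019acom} (or identify a typographical slip in the stated formula); as written, your outline does not yet secure this reconciliation, and it is the step on which the whole $\epsilon^{3/2}$ coefficient, including the $(\kappa-v)^2/12$ recombination, depends.
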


Such expansions may be carried out to higher order,
but the expressions are rather complicated and involve higher derivatives of the
densities and/or boundary. Moreover,
it was shown in \cite{li2009sisc} that the error doesn't 
match the estimates in Lemma \ref{lemma:locasym} until $\delta$ is of 
the order $O(\Delta x^2)$, so that a purely asymptotic approach is not
robust. This phenomenon was referred to in \cite{li2009sisc}
as {\em geometrically-induced stiffness}.
Likewise, rewriting the local part of the single layer potential as
\[
\cs_L[\sigma](\x,t) =
\int _{t-\delta}^t 
\frac{1}{\sqrt{ 4 \pi (t-\tau)}} \int _{\Gamma(\tau)} 
\frac{e^{- \| \x-\y\|^2/( 4(t-\tau))}}{\sqrt{4 \pi (t-\tau)}}
\sigma(\y,\tau) \; ds_{\y} \;d\tau ,
\]
it would appear that the use of a quadrature rule that handles the 
$\frac{1}{\sqrt{ 4 \pi (t-\tau)}}$ singularity in the outer integral
would be very effective, 
since the inner integral can be shown to be smooth as a function of time.
High-order accuracy, for example, can be achieved using 
Gauss-Jacobi quadrature \cite{DR}
or Alpert's hybrid Gauss-trapezoidal rule \cite{alpert1999sisc} for 
an inverse square root singularity.
Unfortunately, it was shown in \cite{li2009sisc} that this method, too,
suffers from geometrically-induced stiffness and the formal order of 
accuracy isn;t seen until the time step is of 
the order $O(\Delta x^2)$.
A method referred to as 
``full product integration" was suggested as a remedy.
For this, the density is expanded at each boundary point as a Taylor
series in time
\[ 
\sigma(\x,\tau) = 
\sigma_0(\x) + (t-\tau) \sigma_1(\x) + \frac{1}{2} (t-\tau)^2
\sigma_2(\x) + \dots
\]
For fixed geometries,
exchanging the order of integration with respect to space and time,
the single layer potential $\cs_L$ takes the form
\begin{align*}
\cs_L[\sigma](\x,t) =  \frac{1}{4 \pi} \bigg[
&\int_{\Gamma} G_0(\x-\y) \sigma_0(\y) \, ds_{\y} 
+ 
\int_{\Gamma} G_1(\x-\y) \sigma_1(\y) \, ds_{\y} 
 + \dots \\
 &+
\frac{1}{(k-1)!} 
\int_{\Gamma} G_{k-1}(\x-\y) \sigma_{k-1}(\y) \, ds_{\y} \bigg]
 + O( (t-\tau)^{k+1/2}) ,
\end{align*}
with 
\begin{equation}
\label{time_kernels}
G_k(\x) =
\int_{t-\delta}^t 
e^{- \| \x \|^2/4(t-\tau)} (t-\tau)^{k-1} \, d\tau . 
\end{equation}
Thus, one can approximate the local part of a layer potential
with high order accuracy
as a sum of spatial integral operators acting on 
$\sigma_0(\x), \sigma_1(\x),\dots,\sigma_{k-1}(\x)$.
The kernels $G_k(\x)$ can be obtained in closed form and have
logarithmically singular kernels. 
For moving geometries, a slightly more complicated scheme is required
but, in either case,  geometrically-induced stiffness is avoided.
This method was implemented
in \cite{wang2016njit,wang2019jsc},
using recursive skeletonization to rapidly apply the 
(non-oscillatory) spatial integral operators 
\cite{ho2012sisc,ying2017siammms}.
Many other herarchical, fast algorithms
are available that reduce the cost from $\mathcal{O}(N_S^2)$
to  $\mathcal{O}(N_S)$ or $\mathcal{O}(N_S \log N_S)$, where $N_S$ denotes
the number of boundary points
(e.g., \cite{HSS,cheng1999jcp,fong2009jcp,gimbutas2003sisc,greengard1987jcp,greengard1997actanum,Hackbusch2015,ho2016cpam1,ho2016cpam2,ying2017rms,ying2004jcp,zhang2011jcp}).

Recently, a hybrid scheme was developed for evaluating the local part
of layer potentials, combining asymptotics and numerical quadrature
after a change of variables \cite{wang2017thesis,wang2019acom}.
First, one further splits the local part into a singular term and a 
near singular term:
\begin{equation}\label{slocalsplit}
  \cs_L[\sigma](\x,t) = \cs_\epsilon[\sigma](\x,t) + \cs_{L^*}[\sigma](\x,t)
\end{equation}
and
\begin{equation}\label{dlocalsplit}
\cd_L[\mu](\x,t) = \cd_\epsilon[\mu](\x,t) + \cd_{L^*}[\mu](\x,t)
\end{equation}
where
\[
\begin{aligned}
\cs_\epsilon[\sigma](\x,t) &:=\int_{t-\epsilon}^t \int_{\Gamma(\tau)}
G(\x-\y,t-\tau) \;\sigma(\y,\tau) \; ds_{\y} \;d\tau \, , \\
\cs_{L^*}[\sigma](\x,t) &:= \int_{t-\delta}^{t-\epsilon} \int_{\Gamma(\tau)}
G(\x-\y,t-\tau) \;\sigma(\y,\tau) \; ds_{\y} \;d\tau \, ,  \\
\cd_\epsilon[\mu](\x,t) &:=\int_{t-\epsilon}^t \int_{\Gamma(\tau)}
\frac{\partial}{\partial \nu_{\y}} G(\x-\y,t-\tau) \mu(\y,\tau) ds_{\y}d\tau\, ,\\
\cd_{L^*}[\mu](\x,t) &:= \int_{t-\delta}^{t-\epsilon} \int_{\Gamma(\tau)}
\frac{\partial}{\partial \nu_{\y}} G(\x-\y,t-\tau) \mu(\y,\tau) ds_{\y}d\tau\, .
\end{aligned}
\]
The singular terms $\cs_\epsilon[\sigma]$ and $\cd_\epsilon[\mu]$ 
can be treated by asymptotic methods, with $\epsilon$ chosen to be 
sufficiently small to satisfy a user-requested tolerance using  
Corollary \ref{cor:slocasym0}.
For the near singular term $\cs_{L^*}[\sigma]$, the change of variables
$e^{-u} = t-\tau$ leads to 
\begin{equation} \label{sloc2new}
\cs_{L^*}[\sigma](\x,t)=\frac{1}{4\pi}\int_{-\log{\delta}}^{-\log{\epsilon}}\int_{\Gamma(t-e^{-u})} e^{-\frac{|\x-\y|^2 e^{u}}{4}} \sigma(\y,t-e^{-u}) 
\, ds_{\y} \, du,
\end{equation}
which is a smooth integral on the interval $[-\log{\delta}, -\log{\epsilon}]$. 
Applying Gauss-Legendre quadrature with $N$ nodes, we have
\begin{equation}\label{slocnapprox}
\cs_{L^*}[\sigma](\x,t)\approx \frac{1}{4\pi} \sum_{j=1}^N \omega_j \int_{\Gamma(t-e^{-u_j})} e^{-\frac{|\x-\y|^2}{4e^{-u_j}}} \sigma(\y,t-e^{-u_j}) ds_{\y},
\end{equation}
where $\{u_j\}_{j=1}^N$ and $\{\omega_j\}_{j=1}^N$ are Legendre nodes and 
weights translated and scaled to $[-\log{\delta}, -\log{\epsilon}]$.
Similarly, for the double layer potential, we have
\be
\cd_{L^*}[\mu](\x,t)\approx\frac{1}{8\pi}\sum_{j=1}^N \omega_j\int_{\Gamma(t-e^{-u_j})} e^{-\frac{|\x-\y|^2 e^{u_j}}{4}}
\mu(\y,t-e^{-u_j}) \frac{(\x-\y)\cdot n_{\y}}{e^{-u_j}} ds_{\y}.
\label{dlocnapprox}
\ee
Detailed error analysis of the approximations \eqref{slocnapprox}
and \eqref{dlocnapprox} can be found in \cite{wang2017thesis,wang2019acom}.
The change of variables causes the quadrature nodes to cluster exponentially
toward the singular point $\tau = t$.
The net error
in the hybrid scheme for the local parts of layer potentials satisfies an
error estimate of the form:
\begin{equation}
 c_1\delta^k + c_2\sqrt{\delta} (1/N)^N),
\label{errortot}
\end{equation}
where the first term in \eqref{errortot} is the error made in
interpolating the density function at the 
Legendre nodes on the exponentially graded mesh using a $k$th order
approximation of the density. The second term comes from 
estimates of the Gauss-Legendre error on the mapped interval
$[t-\delta,t-\epsilon]$. Note that it is formally low-order accurate
in $\delta$ but one can simply set $N$ sufficiently large that
the desired precision is achieved.
The hybrid scheme is immune from geometrically induced stiffness
and has the advantage that the kernels in the spatial integrals
\eqref{slocnapprox} and \eqref{dlocnapprox}
are simply Gaussians, permitting application of the boundary FGT.

\begin{remark} \label{bootstrapeval}
The near history of layer potentials can be evaluated using the
method of this section by simply applying the quadrature component 
to the time interval
$[t-2\delta,t-\epsilon]$ rather than
$[t-\delta,t-\epsilon]$.
\end{remark}

In summary, 
initial, volume and layer potentials can all be evaluated using variants
of the FGT - simplifying the overall complexity of the code in a framwework
that can solve interior, exterior and periodic problems in optimal (or nearly
optimal) time.

\subsection{A brief review of alternative fast algorithms}

The splitting of layer heat potentials into the sum of a local part
\eqref{eq:loc} and a history part 
\eqref{eq:hist} was first introduced as a numerical tool in
\cite{greengard1990cpam} for interior problems.
Here, we outline a modification of that scheme that 
can be applied to both interior or exterior problems
\cite{lin1993thesis,greengard2000acha}, and restrict our attention
to the single layer potential. 
The double layer (or volume) potential is treated in an anaogous fashion.
The starting point of the analysis 
is the following well-known Fourier representation of the heat kernel in 
one dimension:
\begin{equation}\label{frep1d}
  G(x,t)=\frac{e^{-x^2/4t}}{\sqrt{4\pi t}}=\frac{1}{2\pi}\int_{-\infty}^{\infty}
  e^{-\xi^2t}e^{i\xi x}d\xi,
\end{equation}
from which 
\begin{equation}\label{frep2d}
  G(\x,t)=\frac{e^{-|\x|^2/4t}}{{4\pi t}}=
\frac{1}{4\pi^2}\int_{-\infty}^{\infty}\int_{-\infty}^{\infty}
  e^{-|\bxi|^2t}e^{i\bxi \cdot \x}d\bxi.
\end{equation}

\subsubsection{Spectral methods}

In \cite{greengard2000acha}, a spectral approximation
was developed for the one-dimensional heat kernel,
assuming $t\geq \delta >0$ and $|x|\leq R$, which uses
$N_F$ terms and achieves a uniform error bound of the form:
\begin{equation}
  \left|G(x,t)-\sum_{i=1}^{N_F}w_i e^{-\xi_i^2t}e^{i\xi_ix}\right|\leq \epsilon,
\quad |x|\leq R, \quad t\geq \delta.
\end{equation}
By careful selection of quadrature nodes and weights, it was shown that
the number of Fourier modes needed is of the order:
\begin{equation}
  N_F = O\left(\log\left(\frac{1}{\epsilon}\right) \left(
  \log\left(\frac{1}{\epsilon\sqrt{\delta}}\right)\right)^{1/2}
  \frac{R}{\sqrt{\delta}}
  \right).
\end{equation}
These nodes are exponentially clustered toward the origin in the Fourier
transform domain.
Spectral approximations in higher dimensions
are then easily obtained using tensor products 
\cite{greengard2000acha}, leading to
$N_F^2$ Fourier modes for problems in $\mathbb{R}^2$:
\be\label{frep2db}
G(\x-\y, t-\tau)
\approx
\sum_{j=1}^{N_F^2}
w_je^{-\|\bxi_j\|^2(t-\tau)}e^{i\bxi_j\cdot (\x-\y)}.
\ee

If we substitute
the above approximation into the history part of the
single layer potential, we obtain
\be\label{shistfrep}
\ba
\cs_H[\sigma](\x,t)=&\int_0^{t-\delta}\int_{\Gamma(\tau)}
G(\x-\y, t-\tau)
\sigma(\y(s,\tau),\tau)ds_{\y(\tau)}d\tau\\
\approx& \sum_{j=1}^{N_F^2}
\int_0^{t-\delta}\int_{\Gamma(\tau)} w_je^{-\|\bxi_j\|^2(t-\tau)}e^{i\bxi_j\cdot (\x-\y)}
\sigma(\y(s,\tau),\tau)ds_{\y(\tau)}d\tau\\
=&\sum_{j=1}^{N_F^2}w_je^{i\bxi_j\cdot \x(t)}H_j(t),
\ea
\ee
where we have interchanged the order of summation and integration.
The coefficient for each ``history mode", namely $H_{j}(t)$ 
is given by the formula
\begin{equation}\label{histmode}
H_{j}(t)=\int_0^{t-\delta}e^{-\|\bxi_j\|^2(t-\tau)}V_j(\tau)
d\tau,
\end{equation}
with $V_j$ given by the formula
\be\label{histmode2}
V_j(\tau) = \int_{\Gamma(\tau)}
e^{-i\bxi_j\cdot \y(s,\tau)}
\sigma(\y(s,\tau),\tau)ds_{\y(\tau)}, \quad j=1,\ldots,N_F^2.
\ee

As the kernels in \eqref{histmode} are simply exponential functions,
each history mode 
can be evaluated in $\mathcal{O}(1)$ work per time step
using the following recurrence relation
\cite{greengard2000acha,greengard1990cpam,lin1993thesis}
\be\label{hmoderecurrence}
H_{j}(t+\dt)= 
e^{-\|\bxi_j\|^2 \dt} H_{j}(t) +
\int_{t-\delta}^{t+\dt-\delta}
e^{-\|\bxi_j\|^2 (t+\dt-\tau)} V_j(\tau)
d\tau.
\ee
The ``update" integrals in \eqref{hmoderecurrence} can be discretized with
$k$th order accuracy by using a suitable $k$-point quadrature rule.
The spatial integrals $V_j(\tau)$ in \eqref{histmode2} can be
evaluated using the non-uniform FFT of type 3 
(e.g. \cite{finufftlib,nufft2,nufft3,nufft6,nufftlib,nufft7,nufft8})
with $O((N_S+N_F^2)\log(N_S+N_F^2))$ work per time step, where $N_S$ denotes
the number of points in the discretization of $\Gamma$.
Finally,
we may apply the type-3 NUFFT again to evaluate the last expression
 in \eqref{shistfrep} at all $N_S$ boundary points, or $N$ volumetric
discretization points,
at a cost of the order
$O((N_S+N_F^2)\log(N_S+N_F^2))$ or 
$O((N+N_F^2)\log(N+N_F^2))$, respectively.
The algorithm has a net computational complexity of the order
$O(N_T(N_S+N_F^2)\log(N_S+N_F^2))$ for evaluation on the space-time
boundary over $N_T$ time steps. The full algorithm
has been implemented in \cite{wang2016njit,wang2019jsc}. For bounded
domains, see also \cite{brattkus1992siap,ibanez2002,sethianstrain}.

\begin{remark}
Spectral methods for the heat equation have several advantages.
They are straightforward to apply, they do not 
require the extension of the spatial grid to capture diffusion into 
free-space,and they do not involve marching in the sense of 
repeatedly solving an initial value problem at each time step. 
The recurrence (apart
from the update integral) is a {\em multiplicative} factor applied to 
each Fourier mode. 
As a result,the spectral approach avoids the potential accumulation of 
error of the type discussed after Remark \ref{ndegrmk} above.
The evaluation of volume potentials using spectral approximation
is discussed in \cite{li2007jcp}.
The principal difficulty with this approach is that it works
best for {\em large} time steps, since the
number of Fourier modes required is of the order $O(\delta^{d/2})$. This
prevents optimal computational complexity 
as $\delta \rightarrow 0$, unless significant changes are made
to the scheme.
\end{remark}

\subsubsection{The sum-of-exponentials method}

An alternative global basis for representing the heat kernel in 
$\mathbb{R}^d$ is presented in~\cite{jiang2015acom}.
The construction
is based on an efficient sum-of-exponentials approximations for the 
1D heat kernel and the power function $1/t^\beta$ ($\beta>0$). 
More precisely, 
it is shown in \cite{jiang2015acom} that 
for any $0 < \epsilon < 0.1$ and any $T\geq 1000\delta>0$,
the 2D heat kernel $G(\x,t)$ admits the 
following approximation:
\begin{equation}\label{soe1}
\tilde{G}(\x,t)=\sum_{j=1}^{N_2} \tilde{w}_j e^{-\lambda_j t}
\sum_{k=-N_1}^{N_1} w_k e^{s_k t}e^{-\sqrt{s_k} \|\x\|}
\end{equation}
such that 
\begin{equation}\label{soe2}
|G(\x,t)-\tilde{G}(\x,t)|<\frac{1}{t^{d/2}}\cdot \epsilon
\end{equation}
for any $\x\in \mathbb{R}^d$, $t\in [\delta,T]$. 
Here $N_1$ is of the order
\begin{equation}\label{n1estimate}
  O\left(\log\left(\frac{T}{\delta}\right)
  \left(\log\left(\frac{1}{\epsilon}\right)
+\log\log\left(\frac{T}{\delta}\right)\right)\right),
\end{equation}
and $N_2$ is of the order
\be\label{n2estimate}
O\left(\log\left(\frac{1}{\epsilon}\right)
\left(\log\log\left(\frac{1}{\epsilon}\right)
+ \log\left(\frac{T}{\delta}\right)\right)\right).
\ee
The approximation \eqref{soe1} is valid in all of $\mathbb{R}^2$, and
the number of complex exponentials depends only
logarithmically on the ratio $T/\delta$. To give a sense of the 
constants involved in this approximation,
it is shown in~\cite{wang2016njit} 
that for $t\in [10^{-3},1]$ and $\epsilon=10^{-9}$, it is sufficient to 
let $N_1=47$ and $N_2=22$.
Using this representation for the heat kernel,
the history part of the single layer potential,
assuming the boundary is stationary, 
can be written in the form:
\begin{equation}\label{soe3}
\begin{aligned}
\cs_H[\sigma](\x,t)&\approx\int_0^{t-\delta}\int_{\Gamma} 
\sum_{j=1}^{N_2} \tilde{w}_j e^{-\lambda_j (t-\tau)}\\
&\sum_{k=-N_1}^{N_1} w_k e^{s_k (t-\tau)}e^{-\sqrt{s_k} \|\x-\y(s)\|}
\sigma(\y(s),\tau)ds d\tau\\
&=\sum_{j=1}^{N_2} \tilde{w}_j \sum_{k=-N_1}^{N_1} w_k H_{j,k}(\x,t).
\end{aligned}
\end{equation}
Here, each history mode $H_{j,k}$ is given by the formula
\begin{equation}\label{soehistmode}
H_{j,k}(\x,t)=\int_0^{t-\delta}
e^{(-\lambda_j+s_k) (t-\tau)} V_k(\x,\tau)
d\tau,
\end{equation}
with $V_k$ given by the formula
\begin{equation}\label{soevk}
V_k(\x,\tau)=\int_{\Gamma} e^{-\sqrt{s_k} \|\x-\y(s)\|}
\sigma(\y(s),\tau)ds.
\end{equation}
The spatial integrals in \eqref{soevk}
can be evaluated using a variety of fast 
algorithms 
\cite{HSS,fong2009jcp,martinsson,Hackbusch2015,ho2012sisc,ying2004jcp}.
This overall algorithm has nearly optimal complexity 
(see \cite{wang2016njit,wang2019jsc}). 

We made the assumption that the boundary is stationary, since this permits
the use of recurrence relations, 
as in \eqref{hmoderecurrence}, to compute the evolution
of each history mode.
When the domain is in motion, the locations where the layer
potential is to be evaluated are not known in advance. This requires
a more involved algorithm and additional approximations.

\subsubsection{The parabolic FMM}

In \cite{tausch2007jcp,tausch2012}, Tausch
developed the parabolic fast
multipole method (pFMM) for discrete summation involving the heat kernel. 
The pFMM
is similar to the classical FMM in its overall algorithmic structure, but
in $d+1$ dimensions, with special 
consideration for enforcing causality in the temporal variable.
The pFMM has been used in several applications
(e.g., \cite{harbrecht2011ip,harbrecht2014,messner2014jcp,messner2015sisc,tausch2009anm}).

\section{Stability analysis 
of Volterra equations} \label{sec:stabmarch}

Let us return now to the Volterra equation for the Dirichlet problem
\eqref{eq:intdir}, with $\ft(\x,t)=f(\x,t)-u^{(V)}(\x,t)$. 
Using the decomposition of the double
layer potential from the preceding section, we may write:
\be\label{marching1}
\ba
\frac{1}{2}\mu(\x,t+\delta)-\cd_L[\mu](\x,t+\delta) \hspace{2in}  \\
= -\ft(\x,t+\delta)+\cd_{NH}[\mu](\x,t+\delta)+\cd_{FH}[\mu](\x,t+\delta) \\
= -\ft(\x,t+\delta)+\cd_{NH}[\mu](\x,t+\delta)+v_{FH}(\x,t+\delta),
\ea
\ee
where we have replaced $\x_0$ by $\x$ and $t$ by $t+\delta$.
Note that the terms on the right side of \eqref{marching1}
are known so that the only unknown is $\mu(\x,\tau)$ for 
$\tau \in [t,t+\delta]$. 
Recall from Lemma \ref{initrep} that 
$v_{FH}(\x,t+\delta)$ is the initial potential
at time $t+\delta$ with data given by $v_{FH}(\x,t)+\cd_{NH}(\x,t)$ 
at time $t$ in $\bbR^2$. 

\begin{remark}
It may be interesting for some readers to note that, when
solving an interior boundary value problem with 
$\Omega(t) \subset D$, we may 
harshly truncate the domain over which the FGT is computed
to $D$ itself. The result is that the integral representation involves
a modified Green's function which sets the solution to zero outside of 
$D$, rather than the true free-space Green's function.
The difference is that the resulting density $\mu$ solved for 
in the integral equation will no longer be the same.
\end{remark}

Let us now consider perhaps the simplest marching scheme for
\eqref{marching1}, which we will refer to as the (forward) Euler method.
By this, we mean the scheme
\begin{align}
\label{biemarch}
 \mu(\x, (n+1) \dt) \; &=  \nonumber \\
 & 2 
 \int_{n\dt}^{(n+1)\dt} \int_\Gamma
\frac{\partial G(\x-\y,(n+1)\dt -\tau)}{\partial \nu_\y}
\mu(\y,n\dt)ds(\y)d\tau  \\
&  - \; 2r(\x, (n+1)\dt),  \nonumber
\end{align}
where
\[ r(\x,(n+1)\dt) = 
-\ft(\x,(n+1)\dt)+\cd_{NH}[\mu](\x,(n+1)\dt)+v_{FH}(\x,(n+1)\dt).
\]
That is, we assume $\mu(\y,t)$ is piecewise constant over each time
interval $[j\dt,(j+1)\dt]$, taking on the value $\mu(\y,j\dt)$.
This is an explicit, first order accurate formula for the value of the unknown
at the $(n+1)$st time step.

Stability analysis for fully history-dependent recursions (where there is
not a simple one-step propagator as for ordinary and partial
differential equations) is somewhat intricate and involves the consideration
of the spectra of certain Toeplitz operators. 
For the unit ball in $\mathbb{R}^d$,
it is shown in \cite{barnett2019arxiv}
that the forward Euler scheme is {\em unconditionally stable}.
More precisely, it is shown that for $T \geq 1$, 
there exists a constant $c_d$, depending on the spatial dimension $d$ 
such that
\be
\|\mu\|_2 \le c_d T^{d/2} \|\tilde{f}\|_2,
\label{diribound}
\ee
for all $N$, $\dt$ such that $N\dt \le T$.
This result holds for either the Dirichlet or Neumann problem.
In two dimensions, it is further shown that
\be
\|\mu\|_2 \le 7 \|\tilde{f}\|_2
\label{diribound2}
\ee
for any $N$, so long as $\dt\le 1$.
Since we have assumed that the 
diffusion coefficient has been scaled to one, 
this is a physically reasonable requirement on the time step.
The stability result for the Dirichlet problem is also extended in 
\cite{barnett2019arxiv} to the case of an arbitrary
$C^1$ convex boundary in any dimension.
For Robin (convective) boundary conditions, it was shown that
the Euler scheme is stable if
$\dt < \pi/(c^2 \kappa^2)$, where $c = 3 - \sqrt{2}$ and
$\kappa$ is the heat transfer coefficient,
independent of the spatial discretization. 

Such stability results are not possible for direct discretization of the 
governing partial differential equation, since the CFL 
(or domain of dependence) condition is violated for large time 
steps when there is only local coupling of the unknowns.
Numerical experiments suggest that the stability results cited above hold
for smooth, non-convex domains without significant modification,
but this remains to be proven. 

\section{Time-stepping methods and space-time adaptivity}\label{sec:alg}

Let us first consider the semilinear initial value problem
\begin{equation} \label{heatfreeam}
\begin{split}
u_{t}(\x,t) &= \Delta u(\x,t)+F(u,\x,t), \\
u(\x,0) &= u_0(\x),
\end{split}
\end{equation}
with periodic boundary conditions imposed on the unit box 
$B=[-0.5,0.5]^2$. 
As noted in the introduction, we may represent the 
solution in terms of an initial and volume potential:
\be
u(\x,t)=\I[u_0](\x,t)+\V[F](\x,t)
\label{freesolam}
\ee
with 
\begin{align*}
\I[u_0](\x,t) &= \int_{B} G_p(\x-\y,t) u_0(\y) \, d\y,  \\
\V[F](\x,t) &= \int_0^t\int_{B} G_p(\x-\y,t-\tau) F(u,\y,\tau) \, d\y d\tau,
\end{align*}
where $G_p(\x,t)$
is the periodic Green's function for the heat equation 
on the unit box \eqref{heatker2dper}.

\subsection{The Adams-Moulton method for the semilinear Volterra equation}

Let us denote the $n$th time step by 
$t_n=n\dt$ and the solution at the $n$th time step by $u_n(\x)=u(\x,n\dt)$
and consider the marching scheme proposed in
\eqref{vmarchnonlin} written in the form
\be
u_{n+1}(\x)=\I[u_{n}](\x,\dt)+\int_{n\dt}^{(n+1)\dt}U(u,\x,\tau)d\tau
\label{am1}
\ee
with
\be
U(u,\x,\tau)=\int_{B} G_p(\x-\y,t_{n+1}-\tau) F(u(\y,\tau),\y,\tau) \, d\y.
\label{am2}
\ee

Following the nomenclature from ordinary differential equations (ODEs),
we refer to the scheme \eqref{am1} as an
Adams-Moulton method of order $s$, with $s \geq 1$, 
if we approximate $U(\x,\tau)$ for each $\x$ by an interpolating 
polynomial in time, $p(\x,\tau)$, of degree $s-1$:
\be
p(\x,t_{n+1-i})=U(u_{n+1-i},\x,t_{n+1-i}), \qquad i=0,\ldots,s-1.
\label{am3}
\ee
Replacing $U$ in \eqref{am1} by the interpolant $p$ and integrating
the resulting expression, we obtain
\be
u_{n+1}(\x)=\I[u_{n}](\x,\dt)+\dt \sum_{i=0}^{s-1} 
b_iU(u_{n+1-i},\x,t_{n+1-i}),
\label{am4}
\ee
where the $\{ b_i \}$ are the coefficients of the classical 
Adams-Moulton method for ODEs.
The first several Adams-Moulton schemes are as follows:
\be
\ba
&(s=0) \rightarrow b_0 = 1,\quad \\
&(s=1) \rightarrow b_0 = \frac12, b_1 = \frac12,\\
&(s=2) \rightarrow b_0 = \frac5{12}, b_1 = \frac23, b_2 = -\frac1{12},\\
&(s=3) \rightarrow 
b_0 = \frac9{24}, 
b_1 = \frac{19}{24}, b_2 = -\frac5{24}, b_3 = \frac1{24}.
\ea
\ee
Note that at time $t_{n+1}$, 
the previous solutions $u_{n+1-i}$ ($i>0$) are already
known (and can be stored). Thus, the initial potential 
$\I[u_{n}](\x,\dt)$ and the spatial voilume integrals
$U(u_{n+1-i},\x,t_{n+1-i})$ ($i>0$) are known explicitly and can be
computed in optimal time with the FGT.
Second, using the delta function property of the Green's function,
we have
\be
\ba
U(u_{n+1},\x,t_{n+1})&=\lim_{\tau\rightarrow t_{n+1}}\int_{B} G_p(\x-\y,t_{n+1}-\tau) F(u(\y,\tau),\y,\tau) \, d\y\\
&=F(u_{n+1},\x,t_{n+1}).
\label{am6}
\ea
\ee
Combining these two observations, any Adams-Moulton
marching scheme of the form \eqref{am4}
leads to a set of {\it spatially uncoupled, scalar}
nonlinear equations to obtain the new value $u_{n+1}(\x)$:
\be
u_{n+1}(\x)=g(\x)+\dt b_0 F(u_{n+1},\x,t_{n+1}),
\label{am7}
\ee
for some known function $g(\x)$.
The spatial correlation of the solution is propagated from previous time
steps via the non-local Gauss transforms applied to previous solutions 
and forcing terms. As noted in 
\cite{epperson2}, this makes solving 
\eqref{am4} much easier than applying
a fully implicit scheme to the partial differential equuation itself. 

\begin{remark}
In the PDE framework, solving only scalar nonlinear equations can be achieved
by the use of {\em operator splitting methods}, but the order of accuracy
is then limited by the splitting error, although remedies for this are available
\cite{hansen,jahnke,mclachlan,murua,strang,tyson}.
\end{remark}

\begin{remark}
  Many root finding methods can be used to solve \eqref{am7}. We use the
  secant method with two initial guesses chosen to be $u^{(0)}_{n+1}(\x)=u_{n}(\x)$
  and $u^{(1)}_{n+1}(\x)=g(\x)+\dt\, b_0\, F(u_{n},\x,t_{n})$. 
\end{remark}
\begin{remark}
  When the order $s$ of the Adams-Moulton scheme is greater than $2$, 
one needs $u_1, \ldots, u_{s-1}$ with high order accuracy to initialize
the marching scheme. This can be accomplished using a second order 
Adams-Moulton scheme and Richardson extrapolation to the desired order.
\end{remark}
We summarize the steps for solving \eqref{heatfreeam} in Algorithm~\ref{amscheme}.
\begin{algorithm} 
\caption{The Adams-Moulton scheme for solving \eqref{heatfree}}
\label{amscheme}
\begin{algorithmic}
  \Procedure{h2dpersemilinearam}{$\dt$, $N_T$, $s$, $u$, $F$}
  \LeftComment{Input arguments: $\dt$ - the time step.\\
    $\qquad\qquad\qquad\qquad\qquad\,\,\,\,\, N_T$ - the total number of time steps.\\
    $\qquad\qquad\qquad\qquad\qquad\,\,\,\,\, s$ - the order of the Adams-Moulton scheme.\\
    $\qquad\qquad\qquad\qquad\qquad\,\,\,\,\, u$ - the initial data.\\
    $\qquad\qquad\qquad\qquad\qquad\,\,\,\,\, F$ - the function $F(u,\x,t)$.}
  \LeftComment{Output argument: $u$ - the solution to \eqref{heatfree} at time $N_T\dt$.}
  \If {$s > 2$}
  \State Use the second order Adams-Moulton scheme and Richardson extrapolation
   to compute $u_1, u_2,...,u_{s-1}$.
  \EndIf
  \For {$n = s-1, N_T-1$}
  \State Use the FGT to compute $\I[u_{n}](\x,\dt)$ and
  $U(u_{n+1-i},\x,t_{n+1-i})$ ($1\le i<s-1$).
  \State Use the secant method to solve \eqref{am7} to obtain $u_{n+1}(\x)$.
  \EndFor
  \EndProcedure
\end{algorithmic}
\end{algorithm}

\subsection{Spatial refinement and coarsening at each time step} \label{coarseningsec}
For the linear case, the quad-tree structure for each time $t_n$ can be build
in a straightforward manner, as discussed in section \ref{sec:funcapprox}.
That is, for any box $B$, we can form the Chebyshev interpolant of the function
values $F(\x,t)$ from its values
at the $K\times K$ Chebyshev grid in the box. 
We can then evaluate $F(\x,t)$ on the $2k \times 2k$ grid corresponding to the
Chebyshev grids of $B$'s children and see if it agrees with the interpolant to the 
desired precision. 
Alternatively, we could simply develop a heuristic based on the
decay of the Chebyshev expansion computed on $B$.
If the agreement does not satisfy the error tolerance (or the decay heuristic
is not satisfied), we subdivide the box and repeat for each child.

For the semilinear case, the value of the function $F(u,\x,t)$ is not easily
obtained at an arbitrary point $\x$ at $t_{n+1}$ since an accurate value of $u$ is not 
readily available.
However, since \eqref{am7} is a scalar nonlinear equation for each $\x$, 
we may solve the equation at the new point to obtain $u(\x,t_{n+1})$ and
$F(u,\x,t_{n+1})$, after which the same adaptive refinement strategy can be employed.
This permits a local spatial refinement strategy (which is easily executed in parallel).
Note that the function 
$g(\x)$ in \eqref{am7} can be assumed to be resolved as it corresponds to previously
computed solutions. Thus, it can be obtained at the child grid nodes
by Chebyshev interpolation from $B$. 

For coarsening, the reverse strategy is employed. Suppose $B'$ 
is a box with four children.
We can compute $u(\x,t_{n+1})$ at the $K \times K$ Chebyshev nodes in $B'$.
If the corresponding interpolant from $B$ matches both $u(\x,t_{n+1})$ and 
$F(u,\x,t_{n+1})$ to the desired precision at all child grid points, then we 
delete the child nodes and retain the coarsened grid on $B'$ for subsequent time steps.
This is summarized in Algorithm~\ref{spatialadap}.
\begin{algorithm} 
\caption{Spatial refinement and coarsening algorithm for the semilinear heat equation}
\label{spatialadap}
\begin{algorithmic}
  \Procedure{spatialadap}{$T_{n}$, $F$, $n_{\rm l}$, $\epsilon$, $T_{n+1}$, $u_n$}
  \LeftComment{Input: $T_{n}$ - the adaptive quad-tree at $t_{n}$.\\
    $\qquad\qquad\qquad\, F$ - the function $F(u,\x,t)$.\\
    $\qquad\qquad\qquad\, n_{\rm l}$ - the number of levels of $T_{n}$.\\
    $\qquad\qquad\qquad\,\,\, \epsilon$ - the desired precision.}
  \LeftComment{Output: $T_{n+1}$ - the adaptive quad-tree at $t_{n+1}$.\\
    $\qquad\qquad\qquad\,\,\,\, u_{n+1}$ - the solution $u_{n+1}$ 
on $T_{n+1}$.}
        
  \LeftComment{{\bf Step 1} Refinement:}
  \For {each box $B$ in $T_{n}$}
  \If {$B$ is childless}
  \State Calculate the Chebyshev interpolant of $F(u_{n+1},\x,t_{n+1})$ on $B$ and
  compute $F(u_{n+1},\x,t_{n+1})$ on a refined grid corresponding to $B$'s children.
  Compute the error $E$ in the interpolant on the child grids.
  \If {$E < \epsilon$}
  \State \textbf{break}
  \Else
  \State Add the four child boxes to the quad-tree data structure.
  \EndIf
  \EndIf
  \EndFor
  
  \LeftComment{{\bf Step 2} Coarsening:}
  \For {$i = n_{\rm l}-1, 0, -1$}
  \State Calculate the values of the function $F(u_{n+1},\x,t_{n+1})$ 
on the $K\times K$ grid points for each box that has 
  four child leaf nodes.
  \State Delete the four child boxes if $u_{n+1}$ and $F(u_{n+1},\x,t_{n+1})$ are resolved
  at the parent level.
  \EndFor
  
  \EndProcedure
\end{algorithmic}
\end{algorithm}

\section{Numerical examples}\label{sec:examples}

In this section, we illustrate the performance
of the methods described above for solving a variety of problems governed by
the heat equation. All of the component algorithms were implemented in Fortran
and all timings listed are generated using a laptop with a 2.7GHz Intel Core i5 processor and 8GB RAM.

\subsection{A linear, heat equation in a periodic box}
For our first example, we consider
the inhomogeneous heat equation \eqref{heatfree} subject to periodic boundary
conditions on the box $B=[-0.5, 0.5]^2$, as discussed in section \ref{periodicbc},
with the forcing function $F(\x, t)$ given by 
\be \label{eq:ex1}
F(\x, t) = \sum_{\j\in\Z^2} e^{-|\x-\mathbf{c}_1(t)-\j|^2/\delta}-0.5 \, e^{-|\x-\mathbf{c}_2(t)-\j|^2/\delta},
\ee
where $\mathbf{c}_1(t)=(0.25 \, \cos(20\pi t), 0.25 \, \sin(20\pi t))$, and 
$\mathbf{c}_2(t)=(0.25 \, \cos(40\pi t+\pi), 0.25 \, \sin(40\pi t+\pi))$.
These two points move on a circle at different velocities and cause $F(\x,t)$ to
be highly inhomogeneous in space-time.
Note that the evaluation of the forcing function requires summation over the two-dimensional
lattice of images. In practice, the nearest twenty-five copies 
($|\j|\leq 2$) is sufficient to guarantee periodicity with 12 digits of accuracy.
The time-varying adaptive spatial grid
used for resolving the solution is shown in Figure \ref{fig:adaptive_forcing}. It is
obtained automatically using the method of section \ref{coarseningsec}.

\begin{figure}[H]  
  \includegraphics[width=\textwidth]{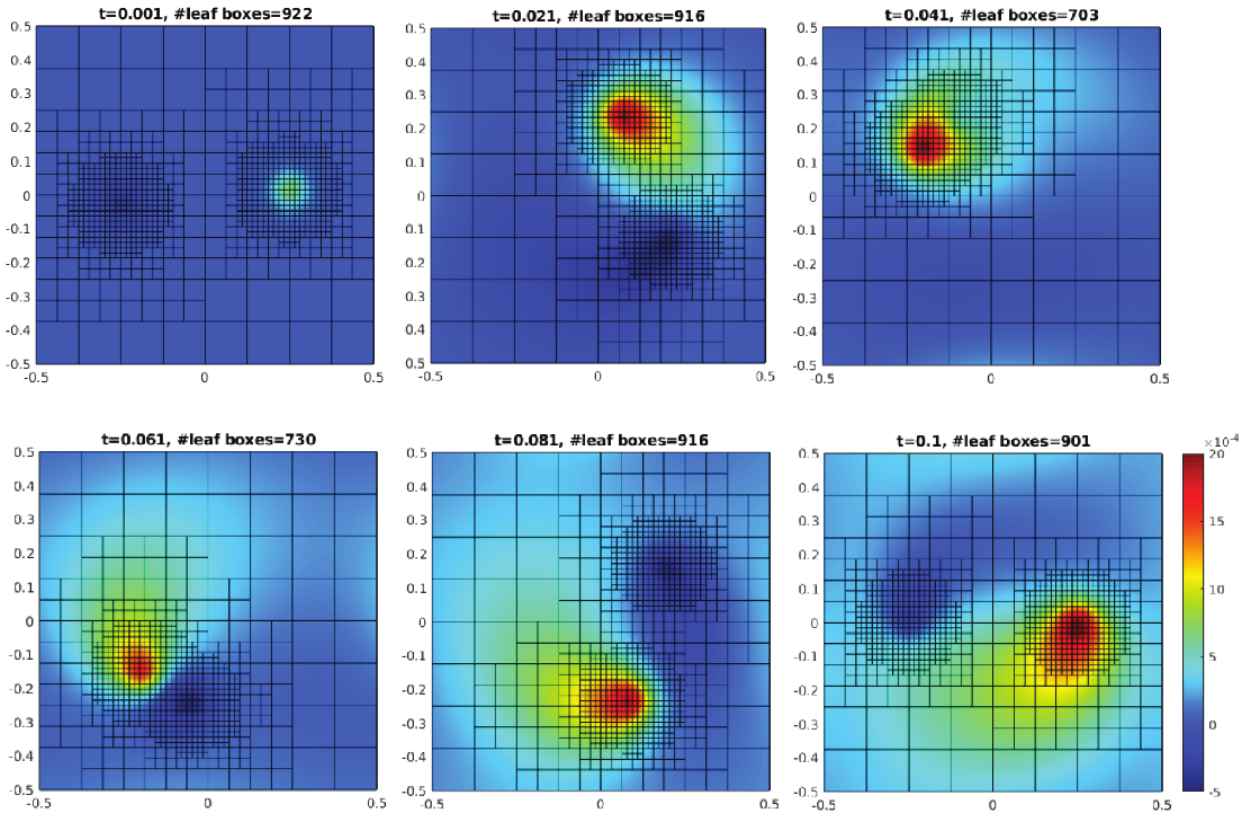}
\caption{Evolution of the solution to the inhomogenous, periodic heat equation with forcing
  function defined in \eqref{eq:ex1}. Also shown are the level-restricted quadtrees at the 
corresponding times generated using our automatic refinement/coarsening strategy
with an eighth-order, piecewise Chebyshev polynomial approximation and an error 
tolerance of $\epsilon^* = 10^{-8}$. }
\label{fig:adaptive_forcing}
\end{figure}

\begin{table}[!ht]
  \caption{Performance of our solver when applied to the inhomogenous heat equation 
with forcing function given in \eqref{eq:ex1}. Here, $|\ell|_{max}$ is the maximum 
number of leaf nodes over the quadtrees at all time steps and $N_T$ is the number of 
time steps using a fourth order accurate quadrature for the local part
(see section \ref{sec:loc}). The reported error is measured in the $L^2$-norm at the final time step.}
\centering
\begin{tabular}{@{}lcccc@{}}
  \toprule
  $\epsilon^*$  & $10^{-3}$ & $10^{-6}$ & $10^{-9}$\\
  \midrule 
   $|\ell|_{max}$ & 73 & 256 & 952  \\ 
   $N_T$ & 25 & 200 & 1000  \\ 
   error & $1.5\cdot 10^{-2}$ & $4.0 \cdot 10^{-5}$ & $8.3 \cdot 10^{-8}$ \\
 \bottomrule
 \end{tabular}
\label{table_errors}
\end{table}

Note that the order
of accuracy in space and time is sufficiently high that 
the error is dominated by the choice of $\epsilon^*$, which 
determines the accuracy of the representation of the forcing function.

\subsection{A semilinear heat equation in a periodic box}

For our second example, we again impose periodic boundary conditions on the box
$B=[-0.5, 0.5]^2$, but consider a semilinear heat equation using the implicit
Adams-Moulton integrator. Recall that, in the integral equation formulation,
this requires only the solution of scalar nonlinear equations.
We solve
\be
\ba
u_t(\x,t)&=\Delta u(\x,t)+F(u,\x,t), \qquad \x\in B\\
u(\x,0)&=f(\x,0), \qquad \x\in B,
\ea
\label{semiex1}
\ee
where $F(u,\x,t)=u^2(\x,t)-f^2(\x,t)+f_t(\x,t)-\Delta f(\x,t)$, with exact solution
\be
f(\x,t)=\sin(2\pi x_1)\cos(2\pi x_2)e^{-t} + \cos(2\pi x_1)\sin(2\pi x_2)e^{-3t}.
\ee
Table \ref{table1} shows the relative $L^2$ errors of the numerical solution
with respect to the exact solution at $T=0.2$ for various time steps and
orders of accuracy. Table \ref{table2} then lists the ratios of successive errors,
which are approaching the theoretical value $2^p$ for the $p$th
order scheme as $\dt$ decreases.

\begin{table}[!ht]
\caption{The relative $L^2$ error of the Adams-Moulton scheme for 
solving \eqref{semiex1}. The column heading $A_p$ indicates the use of
the $p$th order Adams-Moulton scheme.}
\centering
\begin{tabular}{rllllll}
  \toprule
  ${N_T}$  & ${\dt}$ & ${A_2}$ & ${A_3}$ & ${A_4}$ & ${A_5}$ & ${A_6}$  \\
  \midrule
   10 & ${1}/{50}$ & $2.3\cdot 10^{-1}$ & $1.1 \cdot 10^{-1}$ & $6.1\cdot 10^{-2}$ & $3.8\cdot 10^{-2}$ & $2.6\cdot 10^{-2}$ \\
   20 & ${1}/{100}$ & $5.9\cdot 10^{-2}$ & $1.7\cdot 10^{-2}$ & $6.3\cdot 10^{-3}$ & $2.6\cdot 10^{-3}$ & $1.2\cdot 10^{-3}$ \\
   40 & ${1}/{200}$ & $1.5\cdot 10^{-2}$ & $2.57\cdot 10^{-3}$ & $5.3\cdot 10^{-4}$ & $1.3\cdot 10^{-4}$ & $3.5\cdot 10^{-5}$ \\
   80 & ${1}/{400}$  & $3.7\cdot 10^{-3}$ & $3.4\cdot 10^{-4}$ & $3.8\cdot 10^{-5}$ & $4.9\cdot 10^{-6}$ & $8.0\cdot 10^{-7}$ \\
  160 & ${1}/{800}$  & $9.3\cdot 10^{-4}$ & $4.4\cdot 10^{-5}$ & $2.6\cdot 10^{-6}$ & $1.7\cdot 10^{-7}$ & $1.7\cdot 10^{-8}$ \\
  320 & ${1}/{1600}$  & $2.3\cdot 10^{-4}$ & $5.6\cdot 10^{-6}$ & $1.7\cdot 10^{-7}$ & $5.7\cdot 10^{-9}$ & $2.6\cdot 10^{-10}$ \\
 \bottomrule
 \end{tabular}
\label{table1}
\end{table}

\begin{table}[!ht]
  \caption{Ratios of succesive errors in Table \ref{table1}.
  }
  \sisetup{
  table-format = 3.1,
  table-auto-round=true,
  tight-spacing=true
}
\centering
\begin{tabular}{S[table-format=1.1]S[table-format=1.1]SSS}
  \toprule
  ${A_2}$ & ${A_3}$ & ${A_4}$ & ${A_5}$ & ${A_6}$  \\
  \midrule
     3.94 &   6.21 &   9.62 &  14.79 &  22.15 \\
     3.98 &   6.97 &  12.04 &  20.67 &  34.09 \\
     4.00 &   7.45 &  13.78 &  25.39 &  43.59 \\
     4.00 &   7.71 &  14.82 &  28.41 &  48.06 \\
     4.00 &   7.85 &  15.40 &  30.54 &  63.63 \\
 \bottomrule
 \end{tabular}
\label{table2}
\end{table}

\subsection{The Fujita Model}

We now consider a classical reaction-diffusion process described
by the equation
\be
u_t=\Delta u +u^p.
\label{fujita1}
\ee
When $p>1$, it has been shown in \cite{fujita1966} that the solution
may blow up in finite time for the pure initial problem.
For zero Dirichlet boundary conditions, with intial data given
by a Gaussian, the solution will blow up in in finite time if the
initial data is large (see, for example, \cite{vazquez2017} and the
references therein). Here, we study the behavior of the solution
with periodic boundary conditions. We choose $p=2$ and the initial data
as a periodized Gaussian 
\be
u_0(\x)=5\sum_{\j\in\Z^2} e^{-\frac{\|\x- \j\|^2}{0.1}}, \qquad \x\in B.
\label{fujita2}
\ee
As above, we truncate the infinite lattice in the preceding expression at
($|\j|\leq 4$), which is sufficient for full double precision accuracy.
We use the 4th order Adams-Moulton scheme as the solver and terminate
the simulation when the maximum of the solution is more than 10 times
the maximum value of the initial data. The precision flag for the FGT
is set to be $10^{-7}$. We run the simulation twice, with $\dt = 
2\times 10^{-3}$ and $\dt = 10^{-3}$.
The comparison of these two numerical solutions suggests that we have 
at least two digits of accuracy. Figure~\ref{fig:fujita1} shows
snapshots of the solution at various times.

\begin{figure}[t]  
\centering
  \includegraphics[height=45mm]{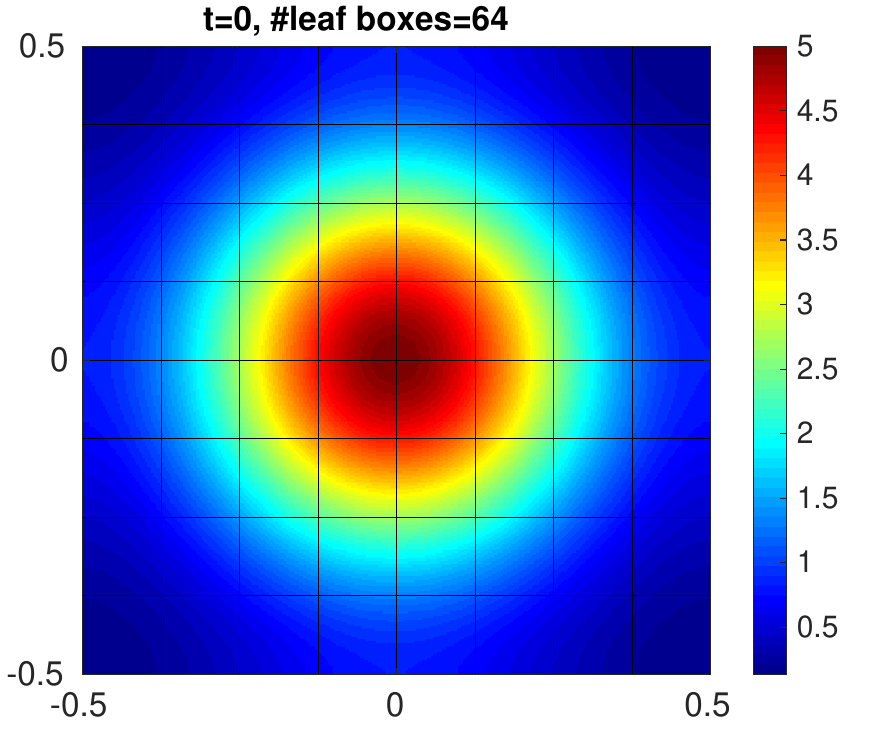}
  \includegraphics[height=45mm]{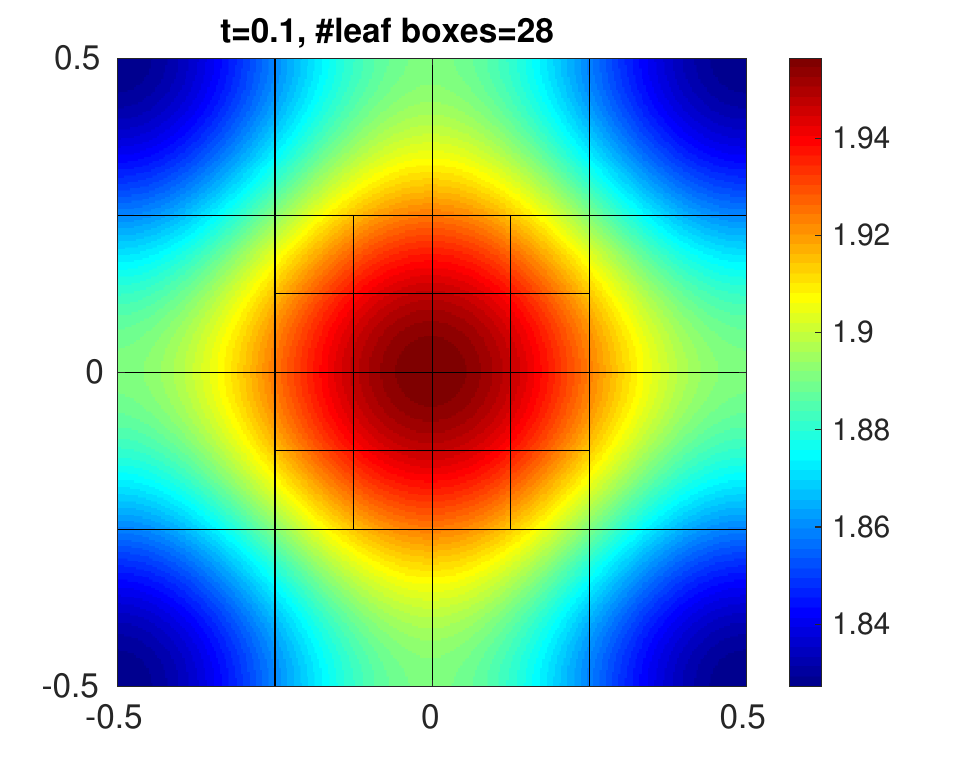}

  \vspace{5mm}

  \includegraphics[height=45mm]{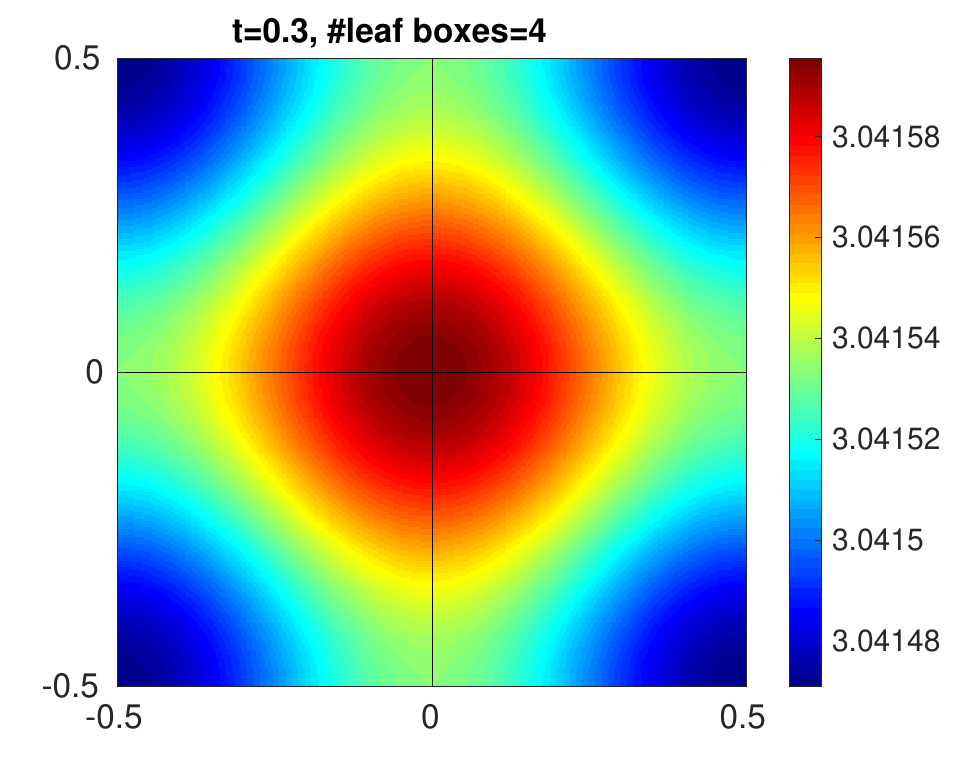}
  \includegraphics[height=45mm]{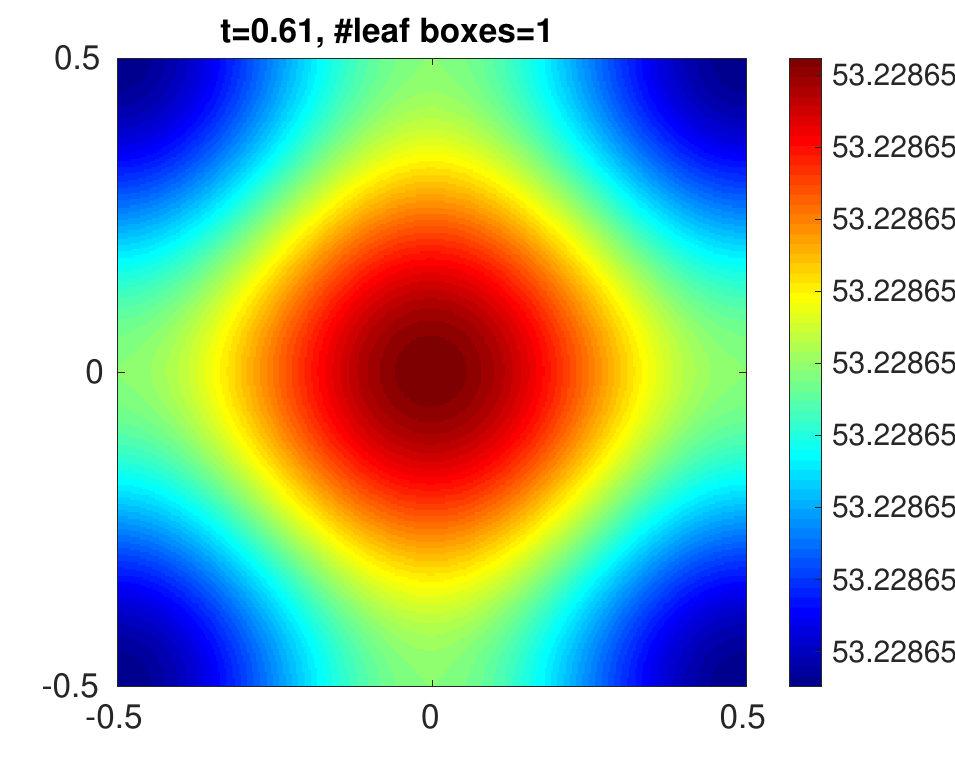}
\caption{Evolution of the solution of the Fujita model with $p=2$ and the initial
  data given by \eqref{fujita2} under periodic boundary conditions.
\label{fig:fujita1}}
\end{figure}

Figure~\ref{fig:fujita2}
shows the evolution of the maximum value of the solution
$u_{\rm max}(t)=\max_{\x\in B} u(\x,t)$. We observe that the solution
decays initially as the diffusion effect dominates the reaction component.
At about $t_c\approx 0.099$, the solution
flattens out to being nearly a constant $u_c\approx 1.91$. 
After that, the solution
evolves as nearly constant on the whole spatial domain and follows
the solution to the ODE $u_t=u^2$, given by the formula
\[\frac{1}{1/u_c-(t-t_c)}.\]
This model fits the data in Figure~\ref{fig:fujita2} for $t>t_c$ extremely well.
We conjecture that, for periodic boundary conditions, the solution will
blow up in finite time if the initial data is a periodized Gaussian, regardless
of its magnitude. This is in stark contrast to the case of zero
Dirichlet conditions. It is, perhaps, to be expected, since
Dirichlet boundary conditions can be interpreted as tiling the plane with
reflected Gaussians of alternating sign, while periodic conditions tile the plane
with images of the same sign.
We leave a rigorous analysis to the interested reader.
\begin{figure}[t]  
\centering
  \includegraphics[height=45mm]{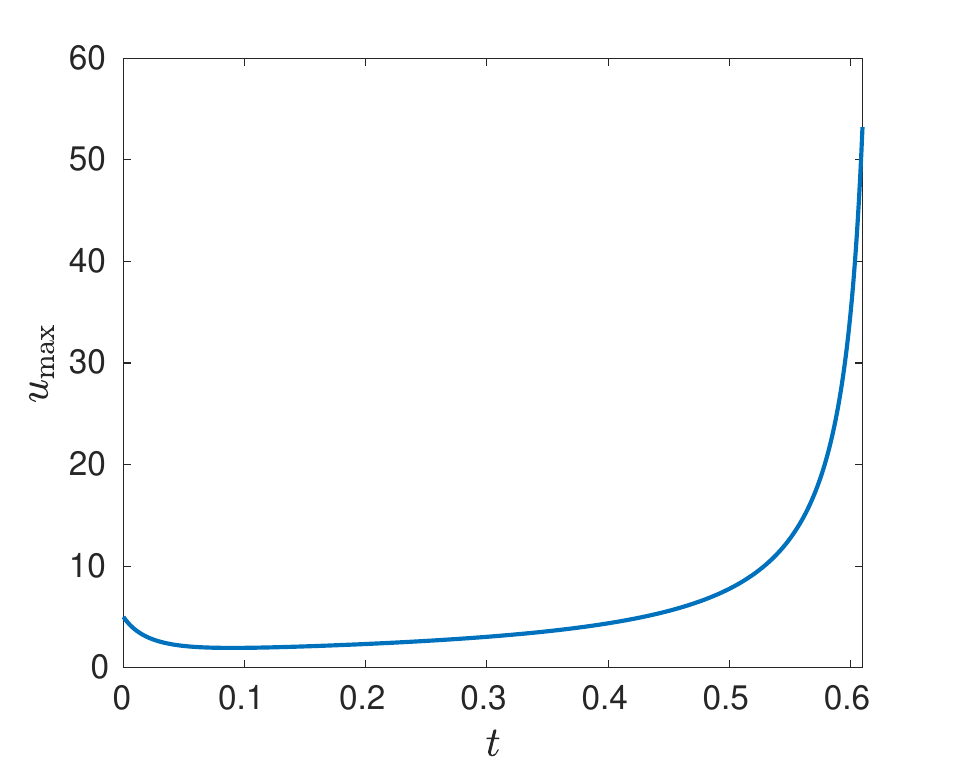}
\caption{Evolution of $u_{\rm max}(t)$.}
\label{fig:fujita2}
\end{figure}

\subsection{Heat equation in static, multiply-connected domain} \label{mcsec}
To illustrate the geometric flexibility and speed of our layer potential
evlauation code, in our last example we solve the heat equation in a periodic box containing 
73 closed curves, on which we impose the initial condition, $\ub(\x,0)=\sin(2\pi x_1)\cos(2\pi x_2)$,
and the boundary condition 
\[ \ub(\x,t)= \sin(2\pi x_1)\cos(2\pi x_2) e^{-8\pi^2 t}, \]
where $\x \in \Gamma$ (see Fig. \ref{fig:complexgeom}.)

The boundary is discretized by piecewise $16$th order Legendre polynomials, with 74752 boundary points in total. A level-restricted quadtree is created so that the initial
condition is resolved to precision $\epsilon=10^{-9}$ and that each leaf box contains
no more than $16$ points. The tree has $9$ levels and $17647$ leaf nodes with an $8 \times 8$ tensor
product Chebyshev grid in each. This leads to around $10^6$ grid points in the unit box and $\Delta x \approx 10^{-4}$ (side length of the smallest leaf node). 

We solve this boundary value problem with a second-order predictor-corrector method in time, with $8$ quadrature nodes in time each for the evaluation of $D_{NH}$ and $D_{L}$. The step size 
$\Delta t = 10^{-8}$ is chosen so that $\Delta t\approx\Delta x^2$. 
The total CPU time consumed
in one time step is $17.6$ seconds: $0.6$ seconds for $D_{FH}$, $10.8$ seconds for $D_{NH}$,
and $6.27$ seconds for $D_{L}$.  The relative $L^2$ error 
at $T=1000\Delta t$ is $2.7 \cdot 10^{-6}$. 

\begin{figure}[t]  
\centering
  \includegraphics[width=0.7\textwidth]{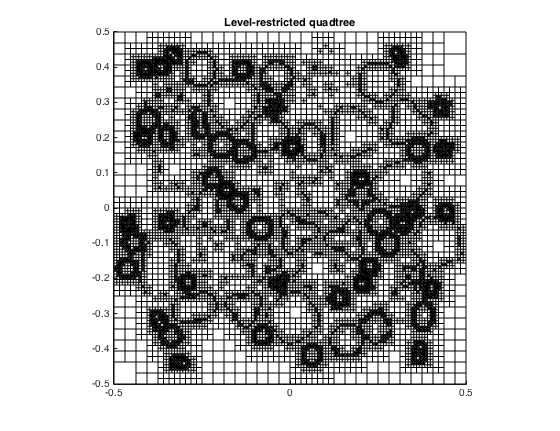}
\caption{The complex domain in our last example and
the level restricted quadtree used for evaluating the corresponding heat potentials.}
\label{fig:complexgeom}
\end{figure}

\section{Conclusions}\label{sec:conclusion}

We have presented a collection of tools for the solution of diffusion
problems in interior, exterior or periodic domains
based on initial, volume and layer heat potentials.
In the linear setting,
these tools permit the creation of stable, explicit, fully automatic,
high-order, space-time adaptive methods for complicated, moving
geometries. 
In the semi-linear setting, a fully implicit formulation needed 
to handle stiff forcing terms requires only
the solution of uncoupled scalar nonlinear equations.

The principal numerical algorithm used here is the fast 
Gauss transform, which we have shown can be used to develop
``optimal complexity"  algorithms for all types 
of heat potentials. Since there are a number of other 
fast algorithms available, future work will involve 
determining which is most suitable for implementation in
three dimensions and for high-performance computing platforms.
Open problems include the design of efficient quadratures for
discontinuous forcing functions 
(without excessive adaptivity at the discontinuities), 
the design of high order representations for 
moving geometries involving free boundaries, and the design
of robust integral representations for systems of diffusion equations.

In the present work, we have assumed that the diffusion coefficient
itself is constant. We are currently working on
integral equation methods that can handle piecewise-constant
and piecewise-smooth diffusion coefficients, and in incorporating
the solvers presented here into codes for applications including
reaction-diffusion equations, crystal growth, 
diffusion tensor magnetic resonance imaging, and viscous fluid dynamics.
Code libraries for the schemes presented here are being prepared for
open source release.

\section*{Acknowledgements}
We would like to thank Travis Askham, Alex Barnett, and Manas Rachh for many
helpful discussions.

\bibliographystyle{plain}

\label{lastpage}
\end{document}